\providecommand{\U}[1]{\protect\rule{.1in}{.1in}}
\newtheorem{theorem}{Theorem}
\newtheorem{exercise}[theorem]{Examples}
\newtheorem{lemma}[theorem]{Lemma}
\newtheorem{notation}[theorem]{Notation}
\newtheorem{proposition}[theorem]{Proposition}
\newtheorem{remark}[theorem]{Remark}
\newtheorem{summary}[theorem]{Summary}
\newenvironment{proof}[1][Proof]{\noindent\textbf{#1.} }{\ \rule{0.5em}{0.5em}}
\let\pdfoutput=\undefined\fi
\begin{document}

\title{Topology of Stokes Complex Related to a Polynomial Quadratic Differential :
Phase Transitions and Number of Short Trajectories}
\author{Gliia Braek, Mondher Chouikhi, and Faouzi Thabet}
\maketitle

\begin{abstract}
In this paper, we introduce ($\mathcal{D\diagup SG}$) correspondence to give a
full classification to the critical graphs (topology of Stokes complex) of
polynomial quadratic differentials $A\left(  z-a\right)  \left(
z^{2}-1\right)  dz^{2}$ on the Riemann sphere $\widehat{%
\mathbb{C}
}$, where $\left(  A,a\right)  \in%
\mathbb{C}
^{\ast}\times%
\mathbb{C}
$. We prove that the number of short trajectories depends on the location of
$a$ in $\Xi_{\theta}$, identified as union of certain curves defined in the
complex plane as the level sets of some harmonic functions. We point in the
"\textit{phase transition" }on\textit{\ }the topology of $\Xi_{\theta}$ which
leads to a double \textit{"tree case" }for some value of $\arg A$.

\end{abstract}

\textit{2020 Mathematics subject classification: 34C05, 57R05, 57R30}

\textit{Keywords and phrases: }Quadratic differentials. Horizontal and
vertical trajectories. Stokes complex. Half-plane domain. Strip domain.
Teichm\"{u}ller Lemma.

\section{Introduction}

This work is the first step of a project highlighting the accordance between
properties of solutions to a second order linear ordinary differential
equation (ODE) with polynomial coefficients in complex domain, and the
structure of related \textit{"parametric"} quadratic differential (or Stokes
geometry). A global asymptotic study of these ODE's can be found in Sibuya's
book (\cite{sibuya}).

Quadratic differentials on a Riemann surface $\mathcal{R}$ equipped with a
conformal structure $\left\{  U_{\alpha},\varphi_{\alpha}\right\}  $ is given
locally by an expression $\varpi(z)dz^{2},$ with $\varpi$ is holomorphic (or
meromorphic) function. These differentials define two important tools on
$\mathcal{R}$: a flat metric with singularities at the critical points of
$\varpi$ (zeros and poles), and trajectories (or foliation). Trajectories play
an important role in different area of mathematics and mathematical physics.
In particular, various geodesics with respect to the flat metric are called
\textit{short trajectories }(for definition, see \ref{para1}). For the general
theory of quadratic differentials, we refer to

(\cite{strebel},\cite{jenkins},\cite{jenk-spencer},\cite{bridgeland+smith},...).

By the word \textit{parametric, }we mean a family of such differentials that
depend on extra data. Let $\mathcal{D}$ be the set of these data. As these
parameters or data varies, one may ask how the structure of trajectories (or
Stokes geometry) change. The study of this correspondence between
$\mathcal{D}$ and Stokes geometry ($\mathcal{D\diagup SG}$) is a highly
attractive subject in mathematical physics. In quantum mechanics, for
instance, the Stokes geometry are relevant in the asymptotic study of
one-dimensional Schr\"{o}dinger equation (\cite[chapter3]{fedoryuk},
\cite{shapiro ev}, \cite{kawaii}, \cite{chouikhi+thabet}...). In this work, we
give an example of ($\mathcal{D\diagup SG}$) correspondence via harmonic
analysis. Before going any further, let give the main motivation to this work:

In \cite{maosero}, the distribution of poles of a particular solution $f$
(called \textit{int\'{e}grale tritronqu\'{e}e}) to the Painlev\'{e}-first
equation (P-I) has been studied :%
\[
\left\{
\begin{array}
[c]{c}%
f^{\prime\prime}(z)=6f^{2}(z)-z,\text{ }z\in%
\mathbb{C}%
\\
\\
f(z)\sim-\sqrt{\frac{z}{6}}\text{ \ \ \emph{if} }\left\vert \arg z\right\vert
<\frac{4\pi}{5}%
\end{array}
\right.
\]
We also mention \cite{Boutroux}, \cite{Ilpo laine}, \cite{maosero}. The main
result in \cite{maosero} is : $a$ \textit{is a pole of }$f$\textit{ if and
only if there exist }$b\in%
\mathbb{C}
$\textit{ such that the following Schr\"{o}dinger equation}%
\begin{equation}
\frac{d^{2}\psi(x)}{dx^{2}}=(4x^{3}-2ax-28b)\psi(x) \label{maosero-cubic}%
\end{equation}
\textit{admits two simultaneous different quantization conditions, where
}$\psi(x)$\textit{ is obtained by} \textit{isomonodromic deformation }(for
details, see also \cite{Kapeav}). A classification of Stokes graph related to
cubic oscillator has been made through proving that $(a,b)\in%
\mathbb{C}
^{2}$ in (\ref{maosero-cubic}) gives rise to a particular $\mathcal{SG}$
structure called \emph{Boutroux curve}\textit{ }(general ideas about this
topic can be found in \cite{bertola}). We reveal important questions from
Maosero's work : how the $\mathcal{SG}$ structure varies as $(a,b)$ varies in
$%
\mathbb{C}
^{2}$ (or a sub-manifold of $%
\mathbb{C}
^{2}$)?\label{first question} For a given pole $a$ of $f,$ does there exist a
unique $b$ such that (\ref{maosero-cubic}) has a solution?
\label{second question}In the present work, we answer these questions. More
precisely, consider the quadratic differential on the Riemann sphere
\begin{equation}
\varpi_{\theta,a}(z)dz^{2}=-\exp(i2\theta)\left(  z-a\right)  \left(
z^{2}-1\right)  dz^{2}, \label{our differential}%
\end{equation}
where $\theta\in\lbrack0,\pi\lbrack,a\in%
\mathbb{C}
.$ The data set
\[
\mathcal{D}=\left\{  \left(  \theta,a\right)  \in\lbrack0,\pi\lbrack\times%
\mathbb{C}
\text{ }\right\}
\]
is a complex sub-manifold of $%
\mathbb{C}
^{2}.$ We aim at studying all possible $\mathcal{SG}$ structures and
topological changes related to trajectories of $\varpi_{\theta,a}$ as $\left(
\theta,a\right)  $ varies in $\mathcal{D}.$ For $\theta\in\lbrack0,\pi
\lbrack,$ we consider the set:%
\begin{equation}
\Xi_{\theta}=\left\{  a\in%
\mathbb{C}
\setminus\left\{  \pm1\right\}  :\varpi_{\theta,a}(z)dz^{2}\text{ admits at
least a short trajectory}\right\}  \label{our set imp}%
\end{equation}
The ($\mathcal{D\diagup SG}$) correspondence is expressed via a surjective map
(see summary.\ref{summary})
\[
\mathcal{J}:\mathcal{D\longrightarrow SG}.
\]
As $\theta$ varies in $[0,\pi\lbrack,$ the occurrence of \textit{tree cases
}(or Boutroux curve from Maosero's work \cite{maosero}) changes from $0$ to
$2.$ We call this phenomenon \textit{phase transitions }in the
($\mathcal{D\diagup SG}$) correspondence.

In \cite[problem 3]{shapiro ev}, the author asked if it is true that for any
polynomials $P,$ there exist $t\in\left[  0,2\pi\right[  $ such that the
quadratic differential $\exp(it)P(z)dz^{2}$ is \emph{very flat} \cite[Problem
3]{shapiro ev}. A positive answer is presented in the case $\deg P=3;$ (see
summary.\ref{summary}).

The paper is organized as follow: In the first section, we introduce
$\Sigma_{-1,\theta},\Sigma_{1,\theta},$ and $\Sigma_{\theta}$ as a union of
level sets of harmonic functions induced by the differential $\varpi
_{\theta,a}$. We give a global description of the topology and connected
components of $\Sigma_{\pm1,\theta}\cup\Sigma_{\theta}$. As $\theta$ varies
from $0$ to $\pi,$ the number of self intersections points changes. These
remarkable transition phases induce a mutation on the Stokes complex of
$\varpi_{\theta,a}.$ More focus will be on the cases $\theta=0$ and
$\theta=\frac{\pi}{4}.$ The main result of the second part of the paper
(theorem.\ref{main thm}) resulting in the description of the critical graph of
$\varpi_{\theta,a}$ and the definition of the map $\mathcal{J}$ $.$ For a
fixed $\theta$ in $[0,\pi\lbrack,$ we prove that the set $\Xi_{\theta}$ is the
union of some analytic curves deduced from $\Sigma_{\pm1,\theta}\cup
\Sigma_{\theta}.$This fact helps us proving the surjectivity of $\mathcal{J}$
in summary.\ref{summary}.

\section{Level set of harmonic functions\label{para1}}

For $\theta\in\left[  0,\pi/2\right[  ,$ we consider the sets%
\begin{align*}
\Sigma_{1,\theta}  &  =\left\{  a\in%
\mathbb{C}
\setminus\left]  -\infty,-1\right]  :\Re\left(  \int_{\left[  1,a\right]
}e^{i\theta}\sqrt{p_{a}\left(  z\right)  }dz\right)  =0\right\}  ;\\
\Sigma_{-1,\theta}  &  =\left\{  a\in%
\mathbb{C}
\setminus\left[  1,+\infty\right[  :\Re\left(  \int_{\left[  -1,a\right]
}e^{i\theta}\sqrt{p_{a}\left(  z\right)  }dz\right)  =0\right\}  ;\\
\Sigma_{\theta}  &  =\left\{  a\in%
\mathbb{C}
\setminus\left[  -1,1\right]  :\Re\left(  \int_{\left[  -1,1\right]
}e^{i\theta}\sqrt{p_{a}\left(  z\right)  }dz\right)  =0\right\}  ;
\end{align*}
where $p_{a}\left(  z\right)  $ is the complex polynomial defined by
\[
p_{a}\left(  z\right)  =\left(  z-a\right)  \left(  z^{2}-1\right)  .
\]

\begin{lemma}
\label{level1}Let $\theta\in\left[  0,\pi/2\right[  .$ Then,

each of the sets $\Sigma_{1,\theta}$ and $\Sigma_{-1,\theta}$ is formed by two
smooth curves that are locally orthogonal respectively at $z=1$ and $z=-1;$
more precisely :%
\begin{align*}
\lim\limits_{\substack{a\longrightarrow-1\\a\in\Sigma_{-1,\theta}}}\arg\left(
a+1\right)   &  =\frac{-2\theta+\left(  2k+1\right)  \pi}{4},k=0,1,2,3;\\
\lim\limits_{\substack{a\longrightarrow+1\\a\in\Sigma_{1,\theta}}}\arg\left(
a-1\right)   &  =\frac{-\theta+k\pi}{2},k=0,1,2,3.
\end{align*}
Curves defining $\Sigma_{1,\theta}$ (resp. $\Sigma_{-1,\theta}$ ) meet only in
$z=1$ (resp. $z=-1$ ). Moreover, for $\theta\notin\left\{  0,\frac{\pi}%
{2}\right\}  ,$ they diverge differently to $\infty$ following one of the
directions
\[
\lim_{\substack{\left\vert a\right\vert \longrightarrow+\infty\\a\in
\Sigma_{\pm1,\theta}}}\arg a=\frac{-2\theta+2k\pi}{5},k=0,1,2,3,4.
\]
For $\theta=0,$ (resp. $\theta=\frac{\pi}{2}$ ), one ray of $\Sigma_{1,\theta
}$ (resp. $\Sigma_{-1,\theta}$ ) diverges to $z=-1$ (resp. $z=1$)
\end{lemma}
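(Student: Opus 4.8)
The plan is to analyse the two holomorphic period functions
\[
\Phi_{\pm 1}(a)=e^{i\theta}\int_{[\pm 1,\,a]}\sqrt{p_a(z)}\,dz,
\]
viewed as single-valued branches on the cut domains $\mathbb{C}\setminus(-\infty,-1]$, resp. $\mathbb{C}\setminus[1,+\infty)$, appearing in the definitions of $\Sigma_{\pm1,\theta}$ — the cut being exactly what prevents the segment of integration from running through the opposite branch point of $\sqrt{p_a}$. Since $\Re\Phi_{\pm1}$ is harmonic, its zero set is a smooth $1$-manifold at every point where $\Phi_{\pm1}'(a)\neq0$, and it acquires a transversal (indeed orthogonal, because the Hessian of a harmonic function is trace-free) self-crossing precisely at the non-degenerate critical points of $\Re\Phi_{\pm1}$, i.e.\ at the simple zeros of $\Phi_{\pm1}'$. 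The whole statement therefore reduces to locating the zeros of $\Phi_{\pm1}'$ together with the local and asymptotic expansions of $\Phi_{\pm1}$.

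First I would expand near the endpoints. Setting $z=1+t(a-1)$ (resp.\ $z=-1+t(a+1)$) and letting $a\to1$ (resp.\ $a\to-1$) factors out $(a\mp1)^2$ and leaves a convergent Beta integral $\int_0^1\sqrt{t(1-t)}\,dt=\tfrac\pi8$, giving, for a suitable branch,
\[
\Phi_{1}(a)=\tfrac{i\pi\sqrt2}{8}\,e^{i\theta}(a-1)^2\bigl(1+o(1)\bigr),\qquad
\Phi_{-1}(a)=\tfrac{\pi\sqrt2}{8}\,e^{i\theta}(a+1)^2\bigl(1+o(1)\bigr),
\]
the discrepancy between the constants ($i\sqrt2$ versus $\sqrt2$) coming from the residual factor $(z+1)\to2$ at $a=1$ against $(z-1)\to-2$ at $a=-1$. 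Thus $a=\pm1$ are simple zeros of $\Phi_{\pm1}'$, hence non-degenerate saddles of $\Re\Phi_{\pm1}$; solving $\Re\bigl(ie^{i\theta}(a-1)^2\bigr)=0$ and $\Re\bigl(e^{i\theta}(a+1)^2\bigr)=0$ yields exactly the tangent directions $\arg(a-1)=\frac{-\theta+k\pi}{2}$ and $\arg(a+1)=\frac{-2\theta+(2k+1)\pi}{4}$, i.e.\ two orthogonal smooth arcs through each endpoint, as claimed.

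Next I would treat the behaviour at infinity by the scaling $z=aw$, under which $p_a(z)\sim a^3w^2(w-1)$ and $dz=a\,dw$, so that
\[
\Phi_{\pm1}(a)=e^{i\theta}a^{5/2}\bigl(C+o(1)\bigr),\qquad C=\int_0^1 w\sqrt{w-1}\,dw,
\]
with the same leading term for both signs, since the two functions differ only by the lower-order quantity $e^{i\theta}\int_{[1,-1]}\sqrt{p_a}=O(a^{1/2})$. A direct evaluation gives $C\in i\mathbb{R}$, whence $\Re\Phi_{\pm1}(a)=0$ forces $\arg a\to\frac{-2\theta+2k\pi}{5}$, $k=0,\dots,4$. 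As the level set carries a single interior critical point (the saddle), the maximum principle forbids closed level loops, so each of $\Sigma_{\pm1,\theta}$ consists of two arcs with four ends in total; a sign analysis of $\Re\Phi_{\pm1}$ on the five angular sectors at infinity then matches these four ends to four distinct rays among the five.

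The remaining and most delicate points are the global regularity and the phase transition, which I expect to be the main obstacle. To show the curves meet only at $\pm1$ I would prove that
\[
\Phi_{\pm1}'(a)=-\tfrac{e^{i\theta}}{2}\int_{[\pm1,\,a]}\sqrt{\tfrac{z^2-1}{z-a}}\,dz
\]
has no zero off $a=\pm1$ in the relevant domain; rather than a direct estimate, I would argue through the quadratic-differential interpretation, a spurious zero of $\Phi'$ producing a forbidden second short trajectory in contradiction with the Teichm\"uller lemma. Finally, for the exceptional values $\theta=0$ and $\theta=\tfrac\pi2$ I would compute the period at the opposite branch point: at $\theta=0$ one finds $\int_{[1,-1]}\sqrt{p_{-1}(z)}\,dz=-\tfrac{16\sqrt2}{15}\,i\in i\mathbb{R}$, so that $a=-1$ lies on the closure of $\Sigma_{1,0}$, which is precisely the statement that one ray ceases to escape to infinity and instead limits to $z=-1$ (symmetrically for $\theta=\tfrac\pi2$ with the roles of $\pm1$ exchanged). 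This degeneration suppresses one of the five admissible directions and accounts for the transition.
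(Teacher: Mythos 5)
Your setup, the local expansions at $a=\pm1$, and the asymptotics at infinity are all correct and coincide with the paper's own route: the paper works with the same parametrized representation (its formula (\ref{integ defining g})), $f_{1,\theta}(a)=ie^{i\theta}(a-1)^{2}\int_{0}^{1}\sqrt{t(1-t)}\sqrt{t(a-1)+2}\,dt$, reads the angles $\frac{-\theta+k\pi}{2}$ and $\frac{-2\theta+(2k+1)\pi}{4}$ off the quadratic factor, and gets the five directions from $f\sim\frac{4i}{15}e^{i\theta}a^{5/2}$. The genuine gap is exactly at the step you yourself flag as the crux and then defer: proving that $\Phi_{\pm1}'$ has no zero off $a=\pm1$ (equivalently, that the curves meet only at $\pm1$). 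Your proposed mechanism --- a spurious zero of $\Phi'$ would create ``a forbidden second short trajectory'' contradicting Teichm\"uller's lemma --- conflates the parameter plane with the trajectory plane. The sets $\Sigma_{\pm1,\theta}$ live in the $a$-plane, while trajectories of $\varpi_{a,\theta}$ live in the $z$-plane for each fixed $a$. A zero of $\Phi_{\pm1}'$ at some $a_{0}$, even one at which the level curve crosses itself, does not produce any short trajectory of $\varpi_{a_{0},\theta}$: condition (\ref{cond necess}) is only \emph{necessary}, never sufficient --- the paper insists on this, and it is the whole reason Theorem \ref{main thm} must pass from $\Sigma_{\pm1,\theta}$ to the strictly smaller sets $S_{\pm1,\theta}$. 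Lemma \ref{teich lemma} constrains closed curves made of trajectories of one fixed quadratic differential; it says nothing about level curves of $a\longmapsto\Re\Phi_{\pm1}(a)$. A secondary omission: for $\theta\in\left]0,\pi/2\right[$ you never exclude that a ray of $\Sigma_{\pm1,\theta}$ terminates at a point of the open cut (e.g. $\left]-\infty,-1\right[$ for $\Sigma_{1,\theta}$); such points are boundary points of the domain of harmonicity, so the principle ``a level curve cannot end at a regular point'' does not apply there, and your count of four ends going to infinity needs this exclusion (the paper devotes a separate argument-estimate to it).

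The irony is that the direct estimate you explicitly chose to avoid follows at once from the very formula you wrote down. Parametrizing your expression for $\Phi_{1}'$ along $z=1+t(a-1)$ gives
\[
\Phi_{1}'(a)=\pm\frac{ie^{i\theta}}{2}\,(a-1)\int_{0}^{1}\sqrt{\frac{t}{1-t}}\,\sqrt{t(a-1)+2}\;dt,
\]
and for $a\in\mathbb{C}\setminus\left]-\infty,-1\right]$ the point $t(a-1)+2$ runs over the segment $\left[2,a+1\right]$, which avoids $\left]-\infty,0\right]$; hence the principal branch satisfies $\Re\sqrt{t(a-1)+2}>0$ pointwise, the integral has strictly positive real part, and $\Phi_{1}'(a)\neq0$ for every $a\neq1$ (similarly for $\Phi_{-1}'$ on its cut plane). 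This is cleaner and in fact stronger than what the paper establishes --- the paper only rules out critical points lying \emph{on} the level set, via a mean-value and argument-comparison argument. With this step repaired, and the cut-termination exclusion added, your outline (orthogonal saddle at $\pm1$, no other critical points, no closed loops by the maximum principle, at most one ray per asymptotic direction, and the computation $\int_{[1,-1]}\sqrt{p_{-1}(z)}\,dz\in i\mathbb{R}$ for $\theta=0$) does reconstruct the lemma along essentially the paper's lines.
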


\begin{proof}
Given a non-constant harmonic function $u$ defined in some domain
$\mathcal{D}$ of $%
\mathbb{C}
,$ the critical points of $u$ are precisely those where%
\[
\frac{\partial u}{\partial z}=\frac{1}{2}\left(  \frac{\partial u}{\partial
x}-i\frac{\partial u}{\partial y}\right)  =0.
\]
They are isolated. If $v$ is the harmonic conjugate of $u$ in $\mathcal{D},$
say, $f\left(  z\right)  =u\left(  z\right)  +iv\left(  z\right)  $ is
analytic in $\mathcal{D},$ then, by Cauchy-Riemann,
\[
f^{\prime}\left(  z\right)  =0\Longleftrightarrow u^{\prime}\left(  z\right)
=0.
\]
The level-set
\[
\Sigma_{z_{0}}=\left\{  z\in\mathcal{D}:u\left(  z\right)  =u\left(
z_{0}\right)  \right\}
\]
of $u$ through a point $z_{0}\in\mathcal{D}$ depends on the behavior of $f$
near $z_{0}.$ More precisely, if $z_{0}$ is a critical point of $u,$ (
$u^{\prime}\left(  z_{0}\right)  =0$ ), then, there exist a neighborhood
$\mathcal{U}$ of $z_{0},$ a holomorphic function $g\left(  z\right)  $ defined
in $\mathcal{U},$ such that
\[
\forall z\in\mathcal{U},f\left(  z\right)  =\left(  z-z_{0}\right)
^{m}g\left(  z\right)  ;g\left(  z\right)  \neq0.
\]
Taking a branch of the $m$-th root of $g\left(  z\right)  ,$ $f$ has the local
structure%
\[
f\left(  z\right)  =\left(  h\left(  z\right)  \right)  ^{m},\forall
z\in\mathcal{U}.
\]
It follows that $\Sigma_{z_{0}}$ is locally formed by $m$ analytic arcs
passing through $z_{0}$ and intersecting there with equal angles $\pi/m.$
Through a regular point $z_{0}\in\mathcal{D},$ ( $u^{\prime}\left(
z_{0}\right)  \neq0$ ), the implicit function theorem asserts that
$\Sigma_{z_{0}}$ is locally a single analytic arc. Notice that the level-set
of a harmonic function cannot terminate at a regular point; (see
\cite{harmonic1},\cite{harmonic2}...).

Let consider the multi-valued function
\[
f_{1,\theta}\left(  a\right)  =\int_{1}^{a}e^{i\theta}\sqrt{p_{a}\left(
t\right)  }dt,a\in%
\mathbb{C}
.
\]
Integrating along the segment $\left[  1,a\right]  ,$ we may assume without
loss of generality that
\begin{equation}
f_{1,\theta}\left(  a\right)  =ie^{i\theta}\left(  a-1\right)  ^{2}\int
_{0}^{1}\sqrt{t\left(  1-t\right)  }\sqrt{t\left(  a-1\right)  +2}dt=\left(
a-1\right)  ^{2}g\left(  a\right)  ;g\left(  1\right)  \neq0.
\label{integ defining g}%
\end{equation}
It is obvious that :
\[
\forall a\in%
\mathbb{C}
\setminus\left]  -\infty,-1\right]  ,\left\{  t\left(  a-1\right)
+2;t\in\left[  0,1\right]  \right\}  =\left[  2,a+1\right]  \subset%
\mathbb{C}
\setminus\left]  -\infty,0\right]  .
\]
Therefore, with a fixed choice of the argument and square root inside the
integral, $f_{1,\theta}$ and $g$ are single-valued analytic functions in $%
\mathbb{C}
\setminus\left]  -\infty,-1\right]  .$

Suppose that for some $a\in%
\mathbb{C}
\setminus\left]  -\infty,-1\right]  ,a\neq1,$%
\[
u\left(  a\right)  =\Re f_{1,\theta}\left(  a\right)  =0;f_{1,\theta}^{\prime
}\left(  a\right)  =0.
\]
Then,
\[
\left(  a-1\right)  ^{3}g^{\prime}\left(  a\right)  +2f_{1,\theta}\left(
a\right)  =0.
\]
Taking the real parts, we get
\begin{align*}
0  &  =\int_{0}^{1}\sqrt{t\left(  1-t\right)  }\Im\left(  e^{i\theta}\left(
a-1\right)  ^{2}\sqrt{t\left(  a-1\right)  +2}\right)  dt;\\
0  &  =\Re\left(  \left(  a-1\right)  ^{3}g^{\prime}\left(  a\right)  \right)
=\int_{0}^{1}t\sqrt{t\left(  1-t\right)  }\Im\left(  \frac{e^{i\theta}\left(
a-1\right)  ^{3}}{2\sqrt{t\left(  a-1\right)  +2}}\right)  dt.
\end{align*}

By continuity of the functions inside these integrals along the segment
$\left[  0,1\right]  ,$ there exist $t_{1},t_{2}\in\left[  0,1\right]  $ such
that
\[
\Im\left(  e^{i\theta}\left(  a-1\right)  ^{2}\sqrt{t_{1}\left(  a-1\right)
+2}\right)  =\Im\left(  \frac{e^{i\theta}\left(  a-1\right)  ^{3}}%
{2\sqrt{t_{2}\left(  a-1\right)  +2}}\right)  =0;
\]
and then
\[
e^{2i\theta}\left(  a-1\right)  ^{4}\left(  t_{1}\left(  a-1\right)
+2\right)  >0,\left(  \frac{e^{2i\theta}\left(  a-1\right)  ^{6}}{t_{2}\left(
a-1\right)  +2}\right)  >0.
\]
Taking their ratio, we get
\[
\frac{\left(  t_{1}\left(  a-1\right)  +2\right)  \left(  t_{2}\left(
a-1\right)  +2\right)  }{\left(  a-1\right)  ^{2}}>0.
\]
which cannot hold, since, if $\Im a>0,$ then%
\[
0<\arg\left(  t_{1}\left(  a-1\right)  +2\right)  +\arg\left(  \left(
t_{2}\left(  a-1\right)  +2\right)  \right)  <2\arg\left(  a+1\right)
<\arg\left(  \left(  a-1\right)  ^{2}\right)  <2\pi.
\]
The case $\Im a<0$ is in the same vein, while the case $a\in%
\mathbb{R}
$ can be easily discarded. Thus, $a=1$ is the unique critical point of $\Re
f_{1,\theta}.$ Since $f^{\prime}{}^{\prime}{}_{1,\theta}(1)=2g\left(
1\right)  \neq0,$ we deduce the local behavior of $\Sigma_{1,\theta}$ near
$a=1.$

Suppose now that for some $\theta\in\left]  0,\frac{\pi}{2}\right[  ,$ a ray
of $\Sigma_{\pm1,\theta}$ diverges to a certain point in $\left]
-\infty,-1\right[  ;$ for example,%
\[
\left(  \overline{\Sigma_{1,\theta}}\setminus\Sigma_{1,\theta}\right)
\cap\left\{  z\in\overline{%
\mathbb{C}
}:\Im z\geq0\right\}  =\left\{  x_{\theta}\right\}  .
\]
Let $\epsilon>0$ such that $0<\theta-2\epsilon.$ For $a\in\Sigma_{1,\theta}$
satisfying $\pi-\epsilon<\arg a<\pi,$ and using the Riemann sum of integrals,
we get
\[
0<\theta-2\epsilon<\theta+2\arg\left(  a-1\right)  +\arg\int_{0}^{1}%
\sqrt{t\left(  1-t\right)  }\sqrt{t\left(  a-1\right)  +2}dt<\frac{\pi}%
{2}+\theta-\frac{\epsilon}{2}<\pi,
\]
and then $\Re f_{1,\theta}\left(  a\right)  \neq0.$The other cases are
similar. Thus, any ray of $\Sigma_{\pm1,\theta}$ should diverge to $\infty.$
The case $\theta=0$ is quiet easier.

It is obvious that for $a\longrightarrow\infty,$ we have $\left\vert
f_{1,\theta}\left(  a\right)  \right\vert \longrightarrow+\infty.$ In
particular, if $a\longrightarrow\infty$ and $\Re f_{1,\theta}\left(  a\right)
=0$ then, $\left\vert \Im f_{1,\theta}\left(  a\right)  \right\vert
\longrightarrow+\infty.$ It follows that
\[
\arg\left(  f_{1,\theta}\left(  a\right)  \right)  \sim\arg\left(  \frac
{4}{15}e^{i\theta}a^{5/2}\right)  \longrightarrow\frac{\pi}{2}+k\pi,k\in%
\mathbb{Z}
\text{ as }a\longrightarrow\infty,\Re f_{1,\theta}\left(  a\right)  =0.
\]
We get the behavior of any arc of $\Sigma_{1,\theta}$ that diverges to
$\infty.$ In particular, from the maximum modulus principle, two rays of
$\Sigma_{1,\theta}$ cannot diverge to $\infty$ in the same direction.

If $\Sigma_{1,\theta}$ contains a regular point $z_{0}$ (for example, $\Im
z_{0}>0$) not belonging to the arcs of $\Sigma_{1,\theta}$ emerging from
$a=1.$ The two rays of the level set curve $\gamma$ going through $z_{0}$
diverge to $\infty$ in two different directions in the upper half-plane. It
follows that $\gamma$ should go through $z_{1}=1+iy,$ for some $y>0,$ or
$z_{1}=y,$ for some $y>1.$ It is easy to check that in the two cases, for any
choice of the argument,
\[
\Re\int_{1}^{z_{1}}\left(  e^{i\theta}\sqrt{p_{z_{1}}\left(  t\right)
}dt\right)  \neq0;
\]
and we get a contradiction. Thus, $\Sigma_{1,\theta}$ is formed only by the
two curves going through $a=1.$ The same idea gives the structure of
$\Sigma_{-1,\theta};$ even more, from the relation
\begin{equation}
\Re f_{\pm1,\theta}\left(  a\right)  =0\Longleftrightarrow\Re f_{\pm
1,\frac{\pi}{2}-\theta}\left(  -\overline{a}\right)  =0, \label{relat symm}%
\end{equation}
available for any $\theta\in\left[  \pi/4,\pi/2\right[  ,$ one can see easily
that $\Sigma_{-1,\frac{\pi}{2}-\theta}$ and $\Sigma_{1,\theta}$ are symmetric
with respect to the imaginary axis; (\ref{relat symm}). This, leads us to
restrict our investigation to the case $\theta\in\left[  0,\pi/4\right]  .$
\end{proof}

In the squel, we define the functions defined as in the proof of
lemma.\ref{level1} :
\[
f_{\theta}\left(  a\right)  =e^{i\theta}\int_{-1}^{1}\sqrt{p_{a}\left(
t\right)  }dt;f_{\pm1,\theta}\left(  a\right)  =e^{i\theta}\int_{\pm1}%
^{a}\sqrt{p_{a}\left(  t\right)  }dt.
\]

\begin{proposition}
\label{level2}For $\theta\in\left[  0,\pi/4\right]  ,$ we denote by
$\Sigma_{\pm1,\theta}^{l}$ and $\Sigma_{\pm1,\theta}^{r}$ respectively the
left and right curves defining $\Sigma_{\pm1,\theta}$ in the upper half plane,
see Fig.\ref{FIG2left}. Then%
\[%
\begin{tabular}
[c]{|l|l|l|l|l|}\hline
& $\Sigma_{1,\theta}^{l}\cap\Sigma_{-1,\theta}^{r}$ & $\Sigma_{1,\theta}%
^{l}\cap\Sigma_{-1,\theta}^{l}$ & $\Sigma_{1,\theta}^{r}\cap\Sigma_{-1,\theta
}^{r}$ & $\Sigma_{1,\theta}^{r}\cap\Sigma_{-1,\theta}^{l}$\\\hline
$\theta\in\left[  0,\pi/8\right[  $ & $\left\{  t_{\theta}\right\}  $ &
$\left\{  e_{\theta}\right\}  $ & $\emptyset$ & $\emptyset$\\\hline
$\theta\in\left[  \pi/8,\pi/4\right]  $ & $\left\{  t_{\theta}\right\}  $ &
$\emptyset$ & $\emptyset$ & $\emptyset$\\\hline
\end{tabular}
\ \ \ \ \ \ \ \ \ \ \
\]
where $t_{\theta}$ and $e_{\theta}$ are two complex number varying
respectively in two smooth curves
\[
\mathcal{A\cap}\left\{  a\in%
\mathbb{C}
:-1\leq\Re a\leq0,\Im a\geq0\right\}  ,
\]
and
\[
\mathcal{E\cap}\left\{  a\in%
\mathbb{C}
:\Re a\leq-1,\Im a\geq0\right\}  ;
\]
where $\mathcal{A}$ and $\mathcal{E}$ are defined by
\begin{align*}
\mathcal{A}  &  \mathcal{=}\left\{  a\in%
\mathbb{C}
:-1\leq\Re a\leq1,\Re f_{1,\theta}\left(  a\right)  =\Re f_{-1,\theta}\left(
a\right)  =0\right\}  ;\\
\mathcal{E}  &  \mathcal{=}\left\{  a\in%
\mathbb{C}
:\left\vert \Re a\right\vert \geq1,\Re f_{1,\theta}\left(  a\right)  =\Re
f_{-1,\theta}\left(  a\right)  =0\right\}  .
\end{align*}
Moreover, the set $\Sigma_{\theta}$ is a smooth curve included in the part of
the upper half plane of the strip bounded by the segment $\left[  -1,1\right]
$ and the lines $y=-\tan\left(  2\theta\right)  \left(  x\pm1\right)  .$ It
goes through $t_{\theta}$ and $e_{\theta}$ (if exists); its two rays diverge
differently to $\infty$ following the direction $\arg a=\pi-2\theta,$ and to
the unique $s_{\theta}\in\left[  -1,0\right]  ,$ such that
\[
\Re\int_{-1}^{1}e^{i\theta}\left(  \sqrt{p_{s_{\theta}}\left(  t\right)
}\right)  _{+}dt=0.
\]
In particular
\[
\Sigma_{1,\theta}\cap\Sigma_{-1,\theta}\cap\left\{  a\in%
\mathbb{C}
:\Im a<0\right\}  =\emptyset.
\]

\end{proposition}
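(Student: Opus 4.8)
The plan is to build everything on one algebraic identity together with a global crossing count. Fix the branch of $\sqrt{p_{a}}$ continued along the broken path $-1\to 1\to a$; since the triangle with vertices $-1,1,a$ contains no branch point in its interior, additivity of the integral gives the period relation $f_{-1,\theta}(a)=f_{\theta}(a)+f_{1,\theta}(a)$ on the upper half plane. Taking real parts, $\Re f_{-1,\theta}(a)=\Re f_{\theta}(a)+\Re f_{1,\theta}(a)$, so whenever $\Re f_{1,\theta}(a)=\Re f_{-1,\theta}(a)=0$ one automatically has $\Re f_{\theta}(a)=0$: \emph{every} point of $\Sigma_{1,\theta}\cap\Sigma_{-1,\theta}$ lies on $\Sigma_{\theta}$. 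This single remark simultaneously explains why $\Sigma_{\theta}$ passes through $t_{\theta}$ and $e_{\theta}$, identifies $\mathcal A$ and $\mathcal E$ as subsets of $\Sigma_{\theta}$, and will later deliver the final ``in particular'' clause.

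For the intersection table I would first extract from Lemma \ref{level1} the \emph{asymptotic skeleton} of the four half curves in the closed upper half plane: $\Sigma_{1,\theta}^{r},\Sigma_{1,\theta}^{l}$ leave $z=1$ along $\arg(a-1)=\tfrac{\pi}{2}-\tfrac{\theta}{2}$ and $\pi-\tfrac{\theta}{2}$, $\Sigma_{-1,\theta}^{r},\Sigma_{-1,\theta}^{l}$ leave $z=-1$ along the two admissible directions of the $\tfrac{-2\theta+(2k+1)\pi}{4}$ formula, and all four tend to $\infty$ along distinct rays $\arg a=\tfrac{-2\theta+2k\pi}{5}$. Since each curve is smooth, nonselfintersecting, and has both ends pinned (Lemma \ref{level1}), the mutual intersection numbers are a topological invariant that one computes by tracking, along one curve, the sign of $\Re f_{\mp1,\theta}$ of the other and applying the intermediate value theorem between pinned endpoints. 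A crossing is created or destroyed exactly when two of the prescribed directions (at $\pm1$ or at $\infty$) cross, i.e. at the threshold $\theta=\pi/8$; there $\Sigma_{1,\theta}^{l}$ and $\Sigma_{-1,\theta}^{l}$ become tangent and $e_{\theta}$ escapes across the boundary $\Re a\le -1$, which accounts for the entry $\{e_{\theta}\}$ disappearing for $\theta\ge\pi/8$. This crossing count and the $\theta=\pi/8$ tangency analysis are the principal obstacle, being the only place where one must go beyond purely local/asymptotic information. That $t_{\theta},e_{\theta}$ trace smooth arcs then follows from the implicit function theorem applied to $F(a,\theta)=(\Re f_{1,\theta}(a),\Re f_{-1,\theta}(a))$: off $\theta=\pi/8$ the two level curves meet transversally, the Jacobian in $a$ is invertible, and the solution depends smoothly on $\theta$, yielding the arcs $\mathcal A\cap\{-1\le\Re a\le 0,\ \Im a\ge 0\}$ and $\mathcal E\cap\{\Re a\le -1,\ \Im a\ge 0\}$.

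For $\Sigma_{\theta}$ itself I would recycle the mechanism of Lemma \ref{level1}. Differentiating under the integral sign gives $f_{\theta}'(a)=-\tfrac{e^{i\theta}}{2}\int_{-1}^{1}\sqrt{\tfrac{t^{2}-1}{t-a}}\,dt$, and the same argument-of-the-integrand estimate as in the lemma shows $f_{\theta}'$ has no zero off $[-1,1]$, so $\Sigma_{\theta}$ is a smooth arc with no critical selfcrossing. The containment in the strip bounded by $[-1,1]$ and the lines $y=-\tan(2\theta)(x\pm1)$ comes from the same type of estimate: if $a$ lies outside that region, the argument of $e^{i\theta}\sqrt{p_{a}(t)}$ stays in a fixed open half plane for every $t\in[-1,1]$, forcing $\Re f_{\theta}(a)\neq 0$. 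The two ends are read off from the leading asymptotics $f_{\theta}(a)\sim \pm\tfrac{i\pi}{2}e^{i\theta}\sqrt{-a}$ as $a\to\infty$, which pins the divergent ray to $\arg a=\pi-2\theta$, and from the boundary value $(\sqrt{p_{a}})_{+}$ as $a$ approaches $[-1,0]$, which locates the finite end at the unique $s_{\theta}$ with $\Re\int_{-1}^{1}e^{i\theta}(\sqrt{p_{s_{\theta}}(t)})_{+}\,dt=0$, uniqueness following from monotonicity of that real integral in $s_{\theta}$. Passage through $t_{\theta}$ and $e_{\theta}$ is already guaranteed by the additivity relation of the first paragraph.

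Finally, the ``in particular'' statement is immediate: by additivity any $a$ with $\Im a<0$ lying in $\Sigma_{1,\theta}\cap\Sigma_{-1,\theta}$ would satisfy $\Re f_{\theta}(a)=0$, hence lie on $\Sigma_{\theta}$, which has just been confined (for $\theta>0$) to the part of the strip inside the upper half plane; thus no such $a$ exists, and the residual estimate ruling out simultaneous vanishing for $\Im a<0$ is the same half-plane argument estimate used for the strip. Throughout, the symmetry (\ref{relat symm}) justifies the restriction to $\theta\in[0,\pi/4]$ and is used to transport statements between $\Sigma_{1,\theta}$ and $\Sigma_{-1,\theta}$, so that only the configurations recorded in the table must be checked directly.
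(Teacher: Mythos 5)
Your opening observation is correct and is in fact the hidden engine of the paper's own argument: choosing one branch of $\sqrt{p_{a}}$ on the triangle with vertices $-1,1,a$ (whose interior contains no branch point), one gets $\Re f_{-1,\theta}=\Re f_{\theta}+\Re f_{1,\theta}$, so any two of the three vanishing conditions imply the third. The paper never displays this identity but uses it implicitly, both when it discards $\Re t_{\theta}>0$ on the grounds that then $t_{\theta}\notin\Sigma_{\theta}$, and in the final claim about $\Im a<0$. Likewise your treatment of $\Sigma_{\theta}$ itself (nonvanishing of $f_{\theta}'$ as a period of a complete elliptic integral, the strip estimate via the argument of the integrand, the asymptotics pinning the divergent ray to $\arg a=\pi-2\theta$, and the boundary-value monotonicity producing the unique $s_{\theta}$) coincides with the paper's proof.

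The genuine gap is the intersection table, which is the heart of the proposition. The claim that ``mutual intersection numbers are a topological invariant'' computed by the intermediate value theorem between pinned ends controls only the parity (or the signed total) of crossings, never the exact number. Concretely, your argument shows $\Sigma_{1,\theta}^{l}\cap\Sigma_{-1,\theta}^{r}\neq\emptyset$, but nothing in it prevents that intersection from consisting of three points; and for the entries $\Sigma_{1,\theta}^{r}\cap\Sigma_{-1,\theta}^{r}=\emptyset$ and $\Sigma_{1,\theta}^{r}\cap\Sigma_{-1,\theta}^{l}=\emptyset$ the pinned-end data is compatible with an even number (e.g.\ two) of crossings, created by an interior tangency with no change whatsoever in the endpoint or asymptotic data; so no purely topological count can deliver emptiness or uniqueness. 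This is exactly where the paper works hardest: it parameterizes the four arcs, differentiates the defining equations along them, and proves the tangent-direction estimates (\ref{eq6}) and (\ref{eq7}), which imply that at any common point of two arcs the crossing is transversal with a fixed orientation, $0<\arg\bigl(\left(  a_{-1}^{l}\right)^{\prime}(t)\diagup\left(  a_{1}^{l}\right)^{\prime}(s)\bigr)<\pi$. Since each arc is properly embedded and separates the upper half plane, consecutive crossings along one arc must alternate in sign, so fixed sign forces at most one crossing; combined with the asymptotic-direction bookkeeping this yields exactly the table. Relatedly, your description of the $\theta=\pi/8$ transition is wrong on both counts: the two curves never become tangent (fixed-sign transversality excludes it), and $e_{\theta}$ does not ``escape across $\Re a\le-1$''; the paper shows $e_{\theta}\to\infty$ as $\theta\to(\pi/8)^{-}$, precisely because the asymptotic direction $\pi-2\theta$ of $\Sigma_{\theta}$ crosses $D_{2}:\arg a=\frac{-2\theta+4\pi}{5}$, and nonexistence for $\theta\in[\pi/8,\pi/4]$ is then deduced because a single crossing of $\Sigma_{1,\theta}^{l}$ with $\Sigma_{-1,\theta}^{l}$ would force a second one, contradicting the at-most-one property --- again the very step your proposal lacks.
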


\begin{figure}[th]
\centering\includegraphics[width=4in]{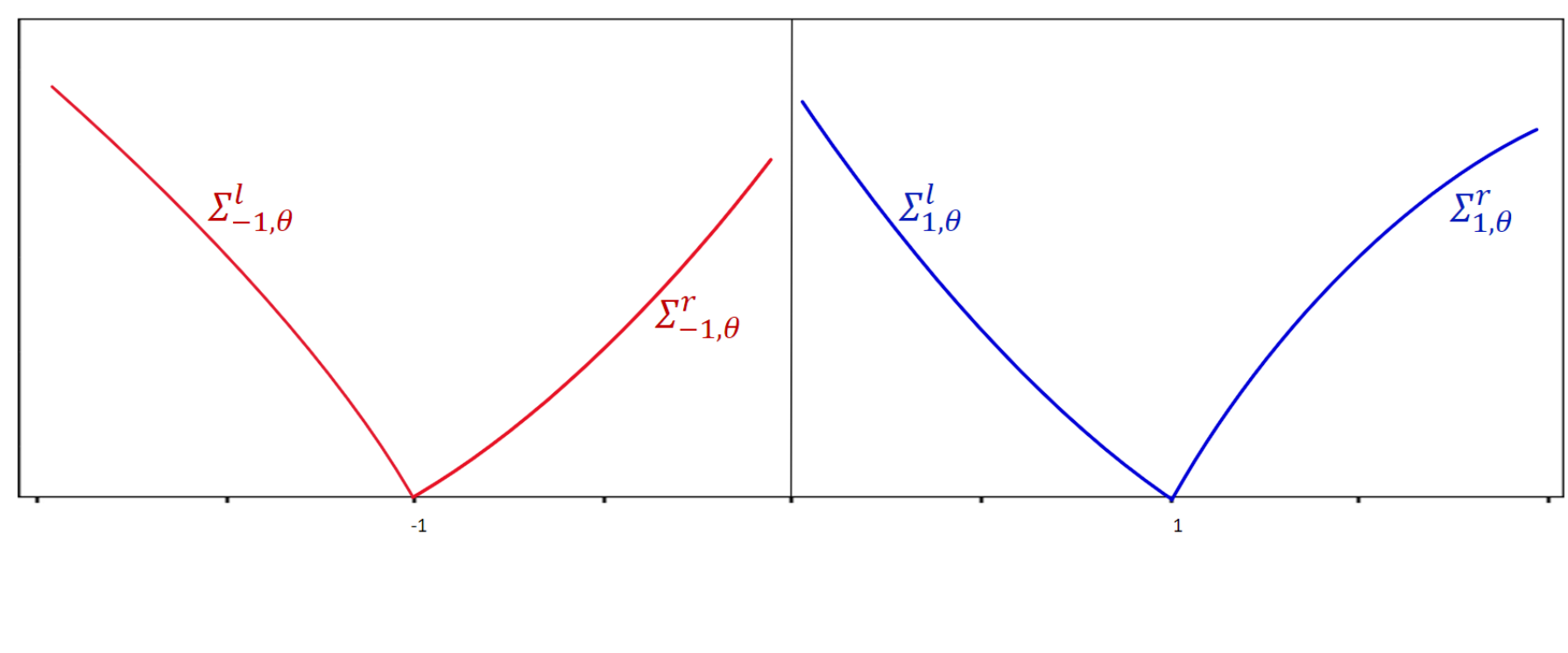} \caption{Sets $\Sigma
_{\pm1,\theta}^{l}$ and $\Sigma_{\pm1,\theta}^{r}$}%
\label{FIG2left}%
\end{figure}

\begin{proof}
If we denote by
\begin{align*}
\Sigma_{\pm1,\theta}^{l}\cap\left\{  z\in%
\mathbb{C}
:\Im z\geq0\right\}   &  =\left\{  a_{\pm1}^{l}\left(  s\right)  ,s\in\left[
0,+\infty\right[  ,a_{\pm1}^{l}\left(  0\right)  =\pm1\right\}  ;\\
\Sigma_{\pm1,\theta}^{r}\cap\left\{  z\in%
\mathbb{C}
:\Im z\geq0\right\}   &  =\left\{  a_{\pm1}^{r}\left(  s\right)  ,s\in\left[
0,+\infty\right[  ,a_{\pm1}^{r}\left(  0\right)  =\pm1\right\}  ,
\end{align*}
then, with (\ref{integ defining g}),%
\begin{align}
\Im\left(  e^{i\theta}\left(  a_{1}^{l}\left(  s\right)  -1\right)  ^{2}%
\int_{0}^{1}\sqrt{t\left(  1-t\right)  }b\left(  t\right)  dt\right)   &
=0;\label{eq1}\\
\Im\left(  e^{i\theta}\left(  a_{1}^{l}\right)  ^{\prime}\left(  s\right)
\left(  a_{1}^{l}\left(  s\right)  -1\right)  \int_{0}^{1}\sqrt{\frac{t}{1-t}%
}b\left(  t\right)  dt\right)   &  =0; \label{eq2}%
\end{align}
where $b\left(  t\right)  =\sqrt{t\left(  a_{1}^{l}\left(  s\right)
-1\right)  +2}.$ Taking the arguments in $\left[  -\pi,\pi\right[  ,$ we get
\begin{align}
\theta+2\arg\left(  a_{1}^{l}\left(  s\right)  -1\right)  +\arg\int_{0}%
^{1}\sqrt{t\left(  1-t\right)  }b\left(  t\right)  dt  &  \equiv
0\operatorname{mod}\left(  \pi\right)  ;\label{eq4}\\
\theta+\arg\left(  a_{1}^{l}\right)  ^{\prime}\left(  s\right)  +\arg\left(
a_{1}^{l}\left(  s\right)  -1\right)  +\arg\int_{0}^{1}\sqrt{\frac{t}{1-t}%
}b\left(  t\right)  dt  &  \equiv0\operatorname{mod}\left(  \pi\right)  ,
\label{eq5}%
\end{align}
and then
\[
\arg\frac{\left(  a_{1}^{l}\right)  ^{\prime}\left(  s\right)  }{a_{1}%
^{l}\left(  s\right)  -1}\equiv\arg\int_{0}^{1}\sqrt{t\left(  1-t\right)
}b\left(  t\right)  dt-\arg\int_{0}^{1}\sqrt{\frac{t}{1-t}}b\left(  t\right)
dt\operatorname{mod}\left(  \pi\right)  .
\]
Using the Riemann sum of integrals, and the obvious inequality
\[
\arg\left(  x+y\right)  <\arg\left(  \alpha x+\beta y\right)  ,
\]
available for $x,y\in%
\mathbb{C}
,0<\arg x<\arg y<\frac{\pi}{2},0<\alpha<\beta.$ We get for $s>0,$
\[
-\frac{\arg\left(  a\left(  s\right)  +1\right)  }{2}<\arg\int_{0}^{1}%
\sqrt{t\left(  1-t\right)  }b\left(  t\right)  dt-\arg\int_{0}^{1}\sqrt
{\frac{t}{1-t}}b\left(  t\right)  dt\leq0.
\]
Continuity of the function $s\longmapsto\arg\frac{\left(  a_{1}^{l}\right)
^{\prime}\left(  s\right)  }{a_{1}^{l}\left(  s\right)  -1}$ in $\left]
0,+\infty\right[  $ gives a certain $k\in\left\{  0,1\right\}  ,$ such that :%
\[
\forall s\geq0,-\frac{\pi}{2}+k\pi<-\frac{\arg\left(  a_{1}^{l}\left(
s\right)  +1\right)  }{2}+k\pi<\arg\frac{\left(  a_{1}^{l}\right)  ^{\prime
}\left(  s\right)  }{a_{1}^{l}\left(  s\right)  -1}\leq k\pi.
\]
In other word, $\Im\left(  \frac{\left(  a_{1}^{l}\right)  ^{\prime}\left(
s\right)  }{a_{1}^{l}\left(  s\right)  -1}\right)  $ and $\Re\left(
\frac{\left(  a_{1}^{l}\right)  ^{\prime}\left(  s\right)  }{a_{1}^{l}\left(
s\right)  -1}\right)  $ preserve the same signs in $\left[  0,+\infty\right[
,$ and we get the monotony of the functions $s\longmapsto\arg\left(  a_{1}%
^{l}\left(  s\right)  -1\right)  ,$ and $s\longmapsto\left\vert a_{1}%
^{l}\left(  s\right)  -1\right\vert $ in $\left[  0,+\infty\right[  .$ Taking
into account their boundary values at $s=0$ and $\infty,$ we get
\begin{equation}
\forall s\geq0,-\frac{\arg\left(  a_{1}^{l}\left(  s\right)  +1\right)  }%
{2}<\arg\left(  \frac{\left(  a_{1}^{l}\right)  ^{\prime}\left(  s\right)
}{a_{1}^{l}\left(  s\right)  -1}\right)  <0. \label{eq6}%
\end{equation}
In the same vein, one can show that
\begin{equation}
\forall t\geq0,\frac{\pi}{2}<\arg\left(  \frac{\left(  a_{-1}^{l}\right)
^{\prime}\left(  t\right)  }{a_{-1}^{l}\left(  t\right)  +1}\right)
<\frac{\pi}{2}+\frac{\arg\left(  a_{-1}^{l}\left(  t\right)  -1\right)  }{2}.
\label{eq7}%
\end{equation}
Since $\Re a_{-1}^{l}\left(  t\right)  <-1$ for any $t>0,$ it follows that, if
$a_{-1}^{l}\left(  t\right)  =a_{1}^{l}\left(  s\right)  ,$ then
\[
-\frac{\pi}{2}<\arg\left(  a_{-1}^{l}\left(  t\right)  +1\right)  -\arg\left(
a_{1}^{l}\left(  s\right)  -1\right)  <0;
\]
in the later case, after making the difference between (\ref{eq7}) and
(\ref{eq6}), we obtain
\[
0<\arg\left(  \frac{\left(  a_{-1}^{l}\right)  ^{\prime}\left(  t\right)
}{\left(  a_{1}^{l}\right)  ^{\prime}\left(  s\right)  }\right)  <\pi.
\]
Thus, $\Sigma_{1,\theta}^{l}$ and $\Sigma_{-1,\theta}^{l}$ cannot meet twice
in the upper half-plane. Repeating the same reasoning for the three other
cases, we get all the points of the lemma about the uniqueness of intersection
when exists.

For $\theta\in\left[  0,\pi/4\right]  ,$ there exist two directions that
should follow any curves of $\Sigma_{\pm1,\theta}$ diverging to $\infty$ in
the upper half plane, $D_{1}:\arg a=\frac{-2\theta+2\pi}{5}$ and $D_{2}:\arg
a=\frac{-2\theta+4\pi}{5}.$ More precisely, since for $a=\pm1,$ $\Sigma
_{a,\theta}^{r}$ and $\Sigma_{a,\theta}^{l}$ cannot diverge to $\infty$ in the
same direction, we deduce that, $\Sigma_{a,\theta}^{r}$ and $\Sigma_{a,\theta
}^{l}$ follow respectively the direction $D_{1}$ and $D_{2}.$ Therefore,
$\Sigma_{1,\theta}^{l}$ and $\Sigma_{-1,\theta}^{r}$ should meet at some
point, say $t_{\theta}.$ We claim that for $\theta\in\left[  0,\frac{\pi}%
{4}\right]  ,$%
\[
-1\leq\Re t_{\theta}\leq0.
\]
The first inequality is obvious since $\Sigma_{-1,\theta}^{r}\cap\left\{  z\in%
\mathbb{C}
:\Re z<-1\right\}  =\emptyset.$ If $\Re t_{\theta}>0,$ then,
\[
\left\vert \sqrt{t_{\theta}-t}\right\vert \leq\left\vert \sqrt{t_{\theta}%
+t}\right\vert ,\forall t\in\left[  0,1\right]  ,
\]
and
\[
0<\frac{\arg\left(  t_{\theta}+1\right)  }{2}\leq\arg\int_{-1}^{1}%
\sqrt{p_{t_{\theta}}\left(  t\right)  }=\arg\int_{0}^{1}\sqrt{1-t^{2}}\left(
\sqrt{t_{\theta}-t}+\sqrt{t_{\theta}+t}\right)  dt\leq\frac{\arg t_{\theta}%
}{2}<\frac{\pi}{4}.
\]
Therefore,
\begin{equation}
0<\arg\int_{-1}^{1}e^{i\theta}\sqrt{p_{t_{\theta}}\left(  t\right)  }%
<\frac{\pi}{2}, \label{resigmathetapositif}%
\end{equation}
and $t_{\theta}\notin\Sigma_{\theta};$ a contradiction. Moreover, relation
(\ref{relat symm}) shows that the sets $\Sigma_{-1,\pi/4}$ and $\Sigma
_{1,\pi/4}$ are symmetric with respect to the imaginary axis; it follows that
$a_{\frac{\pi}{4}}\subset i%
\mathbb{R}
^{+\ast}.$ The monotony of $\arg a_{1}^{l}\left(  s\right)  $ and $\arg
a_{-1}^{r}\left(  s\right)  $ prove that for $\theta\in\left[  0,\frac{\pi}%
{4}\right]  ,$
\begin{equation}
\Im t_{\theta}\in\left[  0,a_{\frac{\pi}{4}}/i\right]  . \label{a theta}%
\end{equation}
Implicit function theorem, and relation (\ref{a theta}), prove that the set
$\left\{  t_{\theta};\theta\in\left[  0,\pi\right]  \right\}  $ defined by
$\Re f_{1,\theta}\left(  t_{\theta}\right)  =\Re f_{1,\theta}\left(
t_{\theta}\right)  =0$ is a closed curve encircling $\left[  -1,1\right]  ,$
that is smooth in $\mathcal{A}\setminus\left\{  \pm1\right\}  .$ The relation
$t_{\theta}\in\Sigma_{1,\theta}^{l}\cap\Sigma_{-1,\theta}^{l}$ implies that%
\[
a_{0}=-1;\lim\limits_{\theta\longrightarrow0^{\pm}}\arg\left(  t_{\theta
}+1\right)  =\pm\frac{\pi}{4}.
\]
See Fig.\ref{FIG00}.

For $\Delta=\left[  -1,+\infty\right[  $ or $\Delta=\left]  -\infty,1\right]
,$ the function $\Re f_{\theta}$ is harmonic in $%
\mathbb{C}
\setminus\Delta,$ which implies the smoothness of any curve in its zero-level
set. If $\gamma$ is a ray of $\Sigma_{\theta}$ going through $t_{\theta}$ and
diverging to $\infty,$ then, from relation
\[
\left(  \int_{-1}^{1}e^{i\theta}\sqrt{p_{a}\left(  t\right)  }dt\right)
^{2}\sim\frac{\pi^{2}}{4}ae^{2i\theta},a\longrightarrow\infty,
\]
we deduce that%
\[
\arg a\sim\pi-2\theta,a\in\gamma,a\longrightarrow\infty.
\]

The relation for $\theta\in\left[  0,\pi/4\right]  ,$%
\[
\frac{-2\theta+4\pi}{5}<\pi-2\theta\Longleftrightarrow\theta\in\left[
0,\pi/8\right[  ,
\]
implies the existence of $e_{\theta}\in%
\mathbb{C}
$ ($\Re e_{\theta}<-1,\Im e_{\theta}>0$) as a unique intersection between
$\Sigma_{-1,\theta}^{l}$ and $\Sigma_{1,\theta}^{l}.$ If for some $\theta
\in\left[  \pi/8,\pi/4\right]  ,$ $\Sigma_{-1,\theta}^{l}$ and $\Sigma
_{1,\theta}^{l}$ meet, then they should do it at least twice because
$\Sigma_{\theta}$ diverges to $\infty$ in the direction $\pi-2\theta\in\left[
\pi/2,\frac{-2\theta+4\pi}{5}\right]  ;$ which violates the uniqueness of the
intersection if exists. Moreover, straightforward calculations show that the
set $\mathcal{E}$ is a smooth curve satisfying,%
\begin{align*}
\lim\limits_{\theta\longrightarrow0^{+}}e_{\theta}  &  =-1,\lim\limits_{\theta
\longrightarrow0^{+}}\arg\left(  e_{\theta}+1\right)  =\left(  \frac{3\pi}%
{4}\right)  ^{-};\\
\lim\limits_{\theta\longrightarrow\left(  \frac{\pi}{8}\right)  ^{-}}%
e_{\theta}  &  =\infty,\lim\limits_{\theta\longrightarrow\left(  \frac{\pi}%
{8}\right)  ^{-}}\arg\left(  e_{\theta}+1\right)  =\left(  \frac{3\pi}%
{4}\right)  ^{-}.
\end{align*}
See Fig.\ref{FIG00}.

The zero-level sets $\Sigma_{1,\theta}$ (resp.$\Sigma_{-1,\theta}$) of the
harmonic functions $\Re f_{1,\theta}$ (resp.$\Re f_{-1,\theta}$) split $%
\mathbb{C}
\setminus\left]  -\infty,-1\right]  $ (resp.$%
\mathbb{C}
\setminus\left[  1,+\infty\right[  $) into four disjoint connected domains. In
each domain, $\Re f_{1,\theta}$ (resp.$\Re f_{-1,\theta}$) preserves a fixed
sign, and in two adjacent domains, it has opposite signs. It follows that
$\Sigma_{1,\theta}\cup\Sigma_{-1,\theta}$ split the complex plane into a
finite number $n_{\theta}$ of disjoint connected domains $\Omega
_{k},k=1,...,n_{\theta}.$ In particular, the set $\Sigma_{\theta}$ is included
in those on which $\Re f_{1,\theta}$ and $\Re f_{-1,\theta}$ have the same sign.

Taking the arguments in $\left]  -\pi,\pi\right]  $ and the square roots such
that $\sqrt{1}=1,$ we have for $\theta\in\left]  0,\pi/4\right]  $ and $a\in%
\mathbb{C}
\setminus%
\mathbb{R}
$ $:$%
\begin{equation}
\frac{\arg\left(  a+1\right)  }{2}+\theta<\arg\left(  e^{i\theta}\int_{\left[
-1,1\right]  }\sqrt{p_{a}\left(  t\right)  }dt\right)  <\frac{\arg\left(
a-1\right)  }{2}+\theta. \label{strip sigma theta}%
\end{equation}
It follows that
\[
\frac{\Im a}{\Re\left(  a+1\right)  }<-\tan2\theta<\frac{\Im a}{\Re\left(
a-1\right)  }.
\]
If $\Im a<0,$ then (\ref{strip sigma theta}) gives
\[
-\frac{\pi}{2}<-\frac{\pi}{2}+\theta<\arg\left(  e^{i\theta}\int_{\left[
-1,1\right]  }\sqrt{p_{a}\left(  t\right)  }dt\right)  <\theta<\frac{\pi}{2};
\]
and then
\[
\Sigma_{\theta}\cap\left\{  a\in%
\mathbb{C}
:\Im a<0\right\}  =\emptyset.
\]
Thus,%
\[
\left(  \Sigma_{1,\theta}\cup\Sigma_{-1,\theta}\right)  \cap\left\{  a\in%
\mathbb{C}
:\Im a<0\right\}  =\emptyset.
\]

Finally, standard arguments of complex analysis show that, the functions%
\begin{align*}
t  &  \longmapsto\left(  \sqrt{p_{a}\left(  t\right)  }\right)  _{+}%
=\lim\limits_{s\longrightarrow t,\Im s>0}\sqrt{p_{a}\left(  s\right)  },\\
t  &  \longmapsto\left(  \sqrt{p_{a}\left(  t\right)  }\right)  _{-}%
=\lim\limits_{s\longrightarrow t,\Im s<0}\sqrt{p_{a}\left(  s\right)  }%
\end{align*}
are well defined and continuous in $\left[  -1,1\right]  .$ An elementary
study shows that in $\left[  -1,1\right]  ,$
\begin{align*}
a  &  \longmapsto\Re\int_{\left[  -1,1\right]  }\left(  \sqrt{p_{a}\left(
t\right)  }\right)  _{+}dt,\\
a  &  \longmapsto\Im%
{\displaystyle\int_{\left[  -1,1\right]  }}
\left(  \sqrt{p_{a}\left(  t\right)  }\right)  _{-}dt
\end{align*}
respectively, increases from $0$ to $\frac{16\sqrt{2}}{15}$ and decreases from
$\frac{16\sqrt{2}}{15}$ to $0.$ It follows that the function
\[%
\begin{array}
[c]{cc}%
\left[  -1,1\right]  \longrightarrow\left]  -\pi,\pi\right[  & a\longmapsto
\arg%
{\displaystyle\int_{\left[  -1,1\right]  }}
\left(  \sqrt{p_{a}\left(  t\right)  }\right)  _{-}dt
\end{array}
\]
decreases from $\frac{\pi}{2}$ to $0;$ which proves the existence and
uniqueness of $s_{\theta}$ in $\left[  -1,1\right]  .$ Relation
(\ref{relat symm}) shows that
\[
s_{\theta}\in\left\{
\begin{array}
[c]{c}%
\left[  -1,0\right[  \text{ if }\theta\in\left[  0,\frac{\pi}{4}\right[  ;\\
\left\{  0\right\}  \text{ if }\theta=\frac{\pi}{4};\\
\left]  0,1\right]  >0\text{ if }\theta\in\left]  \frac{\pi}{4},\frac{\pi}%
{2}\right]  .
\end{array}
\right.
\]
The observation that for $a\in%
\mathbb{C}
\setminus\left[  -1,1\right]  ,\int_{-1}^{1}\sqrt{\frac{1-t^{2}}{a-t}}%
dt\neq0,$ (as a period of complete elliptic integral) shows that the harmonic
function $f_{\theta}^{\prime}\left(  a\right)  $ defining $\Sigma_{\theta}$
has no critical points. If $\gamma$ is another arc in $\Sigma_{\theta},$ not
going through $t_{\theta},$ its two rays diverge differently to $s_{\theta}$
and to $\infty$ in direction $\arg a=\pi-2\theta,$ which violates the maximum
modulus principle. Thus, $\Sigma_{\theta}=\gamma.$ Notice that, if we
parameterize $\Sigma_{\theta}$ by $\left\{  a\left(  s\right)  ,s\geq
0\right\}  ,$ then it follows from (\ref{strip sigma theta}), and
(\ref{resigmathetapositif}) that
\begin{equation}
0\leq\arg\left(  \frac{a^{\prime}\left(  s\right)  }{a\left(  s\right)
+1}\right)  \leq\pi;\frac{-\pi}{2}\leq\arg\left(  \frac{a^{\prime}\left(
s\right)  }{a\left(  s\right)  -1}\right)  \leq0.
\label{monotony of sigmatheta}%
\end{equation}
Therefore, the functions $s\longmapsto\arg$ $\left(  a\left(  s\right)
+1\right)  ,s\longmapsto\arg$ $\left(  a\left(  s\right)  -1\right)  ,$ and
$s\longmapsto\arg$ $\left\vert a\left(  s\right)  -1\right\vert ,$
respectively increases from $0$ to $\pi-2\theta,$ decreases from $\pi$ to
$\pi-2\theta,$ and increases from $0$ to $+\infty.$
\end{proof}

\begin{figure}[tbh]
\begin{minipage}[b]{0.48\linewidth}
		\centering\includegraphics[scale=0.40]{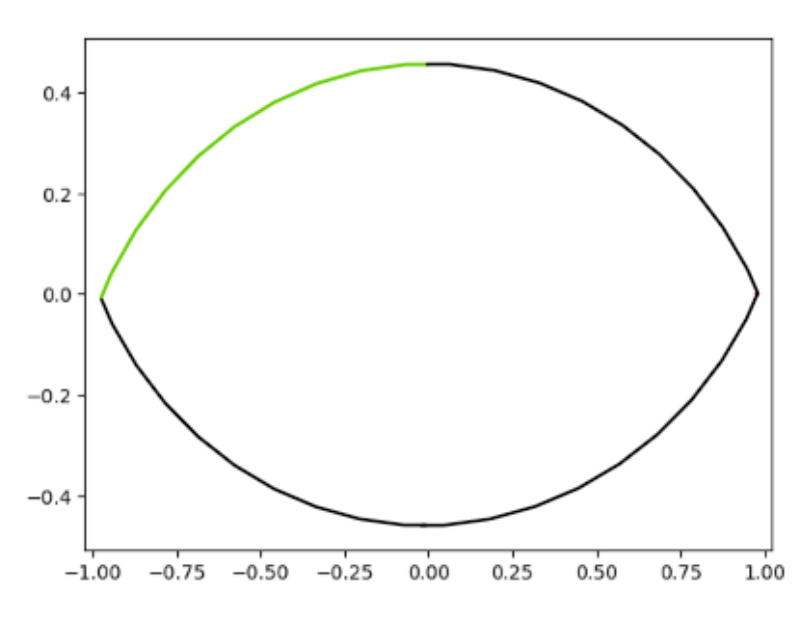}
	\end{minipage}\hfill
\begin{minipage}[b]{0.48\linewidth} \includegraphics[scale=0.4]{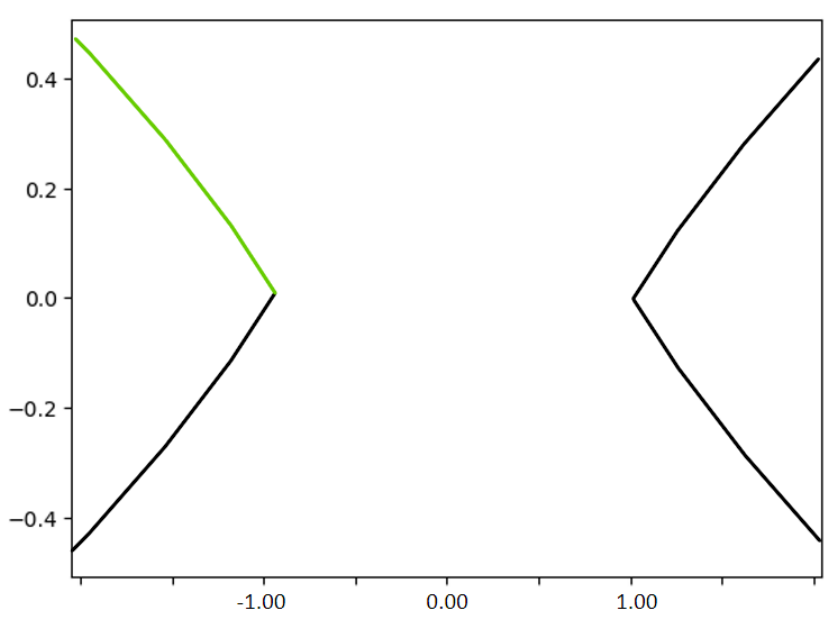}
	\end{minipage}\caption{Sets of $e_{\theta}$ (left) and $a_{\theta}$ (right).}%
\label{FIG00}%
\end{figure}

\begin{exercise}
\label{sets sigma}%
\[%
\begin{tabular}
[c]{|l|l|l|l|l|}\hline
$\diagdown\theta$ & $0$ & $\arctan\left(  1/2\right)  $ & $\pi/8$ & $\pi
/4$\\\hline
$t_{\theta}$ & $-1$ & $-0.69..+i0.25..$ & $-0.49..+i0.36..$ & $i0.462..$%
\\\hline
$s_{\theta}$ & $-1$ & $-0,37..$ & $-0.236..$ & $0$\\\hline
$e_{\theta}$ & $-1$ & $-1.67..+i0.79$ & none & none\\\hline
\end{tabular}
\ \ \ \ \ \ \ \ \ \
\]

\end{exercise}

See Fig.\ref{FIG1500}. \begin{figure}[tbh]
\begin{minipage}[b]{0.4\linewidth}
		\centering\includegraphics[scale=0.45]{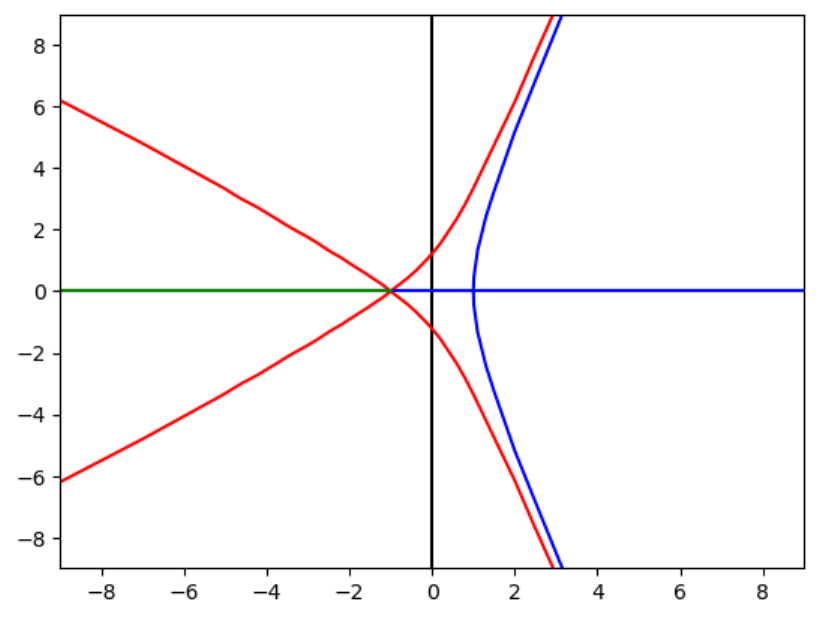}	
	\end{minipage}\hfill\begin{minipage}[b]{0.4\linewidth}
	\centering\includegraphics[scale=0.45]{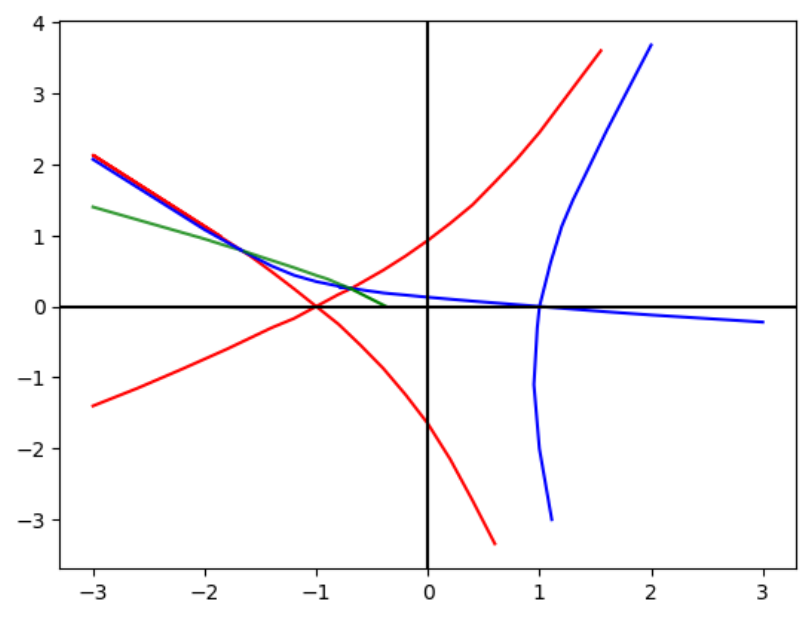}	
\end{minipage}\hfill\begin{minipage}[b]{0.4\linewidth}
		\centering\includegraphics[scale=0.45]{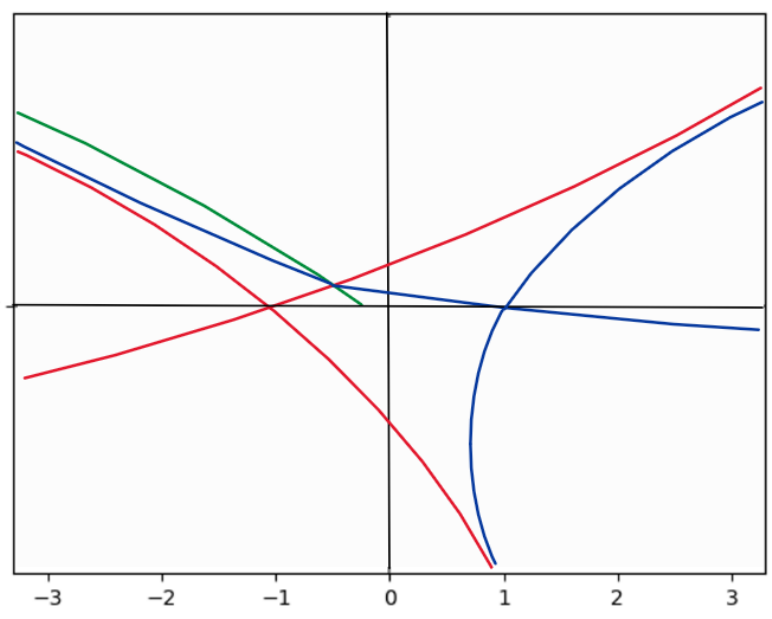}
	\end{minipage}\hfill\begin{minipage}[b]{0.4\linewidth}
		\centering\includegraphics[scale=0.45]{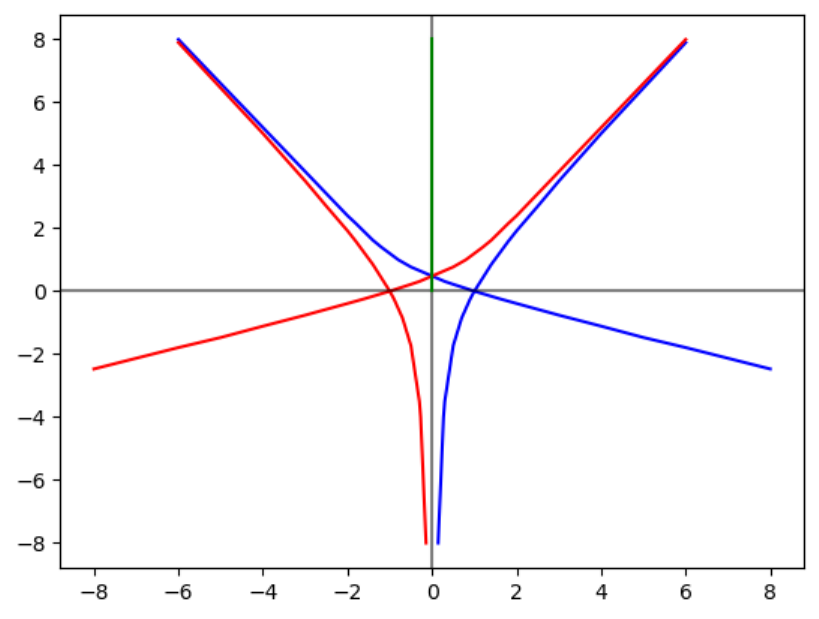}	
	\end{minipage}\hfill\caption{Approximate plots of the sets $\Sigma_{1,\theta
}$ (blue), $\Sigma_{-1,\theta}$ (red), and $\Sigma_{\theta}$ (green)
respectively for $\theta=0,$ $\theta=\arctan(0.5)/2,$ $\theta=\pi/8,$ and
$\theta=\pi/4$.}%
\label{FIG1500}%
\end{figure}

\section{A quadratic differential}

The theory of quadratic differentials appears in many areas of mathematics and
mathematical physics such as univalent functions and asymptotic theory of
linear ordinary differential equations, orthogonal polynomials, moduli spaces
of algebraic curves, and quantum mechanic. In this section, we consider the
quadratic differential on the Riemann sphere $\widehat{%
\mathbb{C}
},$%
\[
\varpi=\lambda p\left(  z\right)  dz^{2},
\]
where $p\left(  z\right)  $ is a $3$-degree complex polynomial with simple
zeros, and $\lambda$ is a non-vanishing complex number. A full description of
its critical graph can be found in \cite{maosero}, but regardless the location
of its zero.

By a linear change of variable, and taking into account that the critical
graph is invariant under multiplication of $\varpi$ by a positive real, we may
assume, without loss of the generality that
\[
\varpi=\varpi_{a,\theta}=-e^{2i\theta}p_{a}\left(  z\right)  dz^{2}%
=-e^{2i\theta}\left(  z-a\right)  \left(  z^{2}-1\right)  dz^{2},
\]
for some $a\in%
\mathbb{C}
\setminus\left\{  \pm1\right\}  ,$ and $\theta\in\left[  0,\pi\right[  .$ The
relations%
\begin{align}
\varpi_{a,\theta}  &  >0\Longleftrightarrow\varpi_{-a,\theta+\frac{\pi}{2}%
}>0,\label{sym1}\\
\varpi_{a,\theta}  &  >0\Longleftrightarrow\varpi_{-\overline{a},\frac{\pi}%
{2}-\theta}>0, \label{sym2}%
\end{align}
let us take the restriction $\theta\in\left[  0,\pi/4\right]  .$ More focus
will be on the two cases $\theta=0,$ and $\theta=\pi/4.$

In a first plan, we give some immediate and brief observations from the theory
of quadratic differentials. For more details, we refer the reader to
\cite{strebel},\cite{jenkins},\cite{jenk-spencer},...

Critical points of $\varpi_{a,\theta}$ are its zero's and poles in $\widehat{%
\mathbb{C}
}$. Zeros are called \emph{finite critical points}, while poles of order 2 or
greater are called \emph{infinite critical points}. All other points of
$\widehat{%
\mathbb{C}
}$ are called \emph{regular points}. Finite critical points of the quadratic
differential $\varpi_{a,\theta}$ are $\pm1$ and $a,$ as simple zeros; while,
by the change of variable $y=1/z,$ $\varpi_{a,\theta}$ has a unique infinite
critical point that is located at $\infty$ as a pole of order $7.$

\emph{Horizontal trajectories, }or \emph{Stokes lines }(or just trajectories,
in this section) of the quadratic differential $\varpi_{a,\theta}$ are the
level curves defined by
\begin{equation}
\Re\int^{z}e^{i\theta}\sqrt{p_{a}\left(  t\right)  }dt=\emph{const;}
\label{eq integ}%
\end{equation}
or equivalently%

\[
-e^{2i\theta}p_{a}\left(  z\right)  dz^{2}>0.
\]
If $z\left(  t\right)  ,t\in%
\mathbb{R}
$ is a horizontal trajectory, then the function
\[
t\longmapsto\Im\int^{t}e^{i\theta}\sqrt{p\left(  z\left(  s\right)  \right)
}z^{\prime}\left(  s\right)  ds
\]
is monotonic.

The \emph{vertical }(or, \emph{orthogonal}) \emph{trajectories} are obtained
by replacing $\Im$ by $\Re$ in equation (\ref{eq integ}). Horizontal and
vertical trajectories of the quadratic differential $\varpi_{a,\theta}$
produce two pairwise orthogonal foliations of the Riemann sphere $\widehat{%
\mathbb{C}
}.$ A trajectory passing through a critical point of $\varpi_{a,\theta}$ is
called \emph{critical trajectory}. In particular, if it starts and ends at a
finite critical point, it is called \emph{finite critical trajectory} or
\emph{short trajectory}, otherwise, we call it an \emph{infinite critical
trajectory}. The closure of the set of finite and infinite critical
trajectories, that we denote by $\Gamma_{a,\theta}$ is called the
\emph{critical }(or \emph{Stokes} ) \emph{graph}. The local and global
structure of the trajectories are studied in \cite{strebel},\cite{jenkins}%
,...etc (see Fig.\ref{FIG0}). At any regular point, horizontal (resp.
vertical) trajectories look locally as simple analytic arcs passing through
this point, and through every regular point of $\varpi_{a,\theta}$ passes a
uniquely determined horizontal (resp. vertical) trajectory of $\varpi
_{a,\theta};$ the are locally orthogonal at this point. From each zero with
multiplicity $r$ of $\varpi_{a,\theta},$ there emanate $r+2$ critical
trajectories spacing under equal angles $2\pi/(r+2).$ Since $\infty$ is a pole
of order $7,$ there are $5$ asymptotic directions (called \emph{critical
directions}) spacing under equal angle $2\pi/5,$ and a neighborhood
$\mathcal{V}$ of $\infty,$ such that each trajectory entering $\mathcal{V}$
stays in $\mathcal{V}$ and tends to $\infty$ following one of the critical
directions:
\begin{equation}
D_{\theta,k}=\left\{  z\in%
\mathbb{C}
:\arg\left(  z\right)  =\frac{-2\theta+\left(  2k+1\right)  \pi}{5}\right\}
;k=0,...,4. \label{critcal directions}%
\end{equation}
Analogues behavior happens to the orthogonal trajectories at $\infty$, but
their directions are :%
\[
D_{\theta,k}^{\perp}=\left\{  z\in%
\mathbb{C}
:\arg\left(  z\right)  =\frac{-2\theta+2k\pi}{5}\right\}  ;k=0,...,4.
\]
Observe that for $\theta\in\left[  0,\frac{\pi}{2}\right[  ,$ exactly three
critical directions, $D_{\theta,0},D_{\theta,1},$ and $D_{\theta,2}$ are in
the upper half-plane. \begin{figure}[tbh]
\begin{minipage}[b]{0.48\linewidth}
		\centering\includegraphics[scale=0.30]{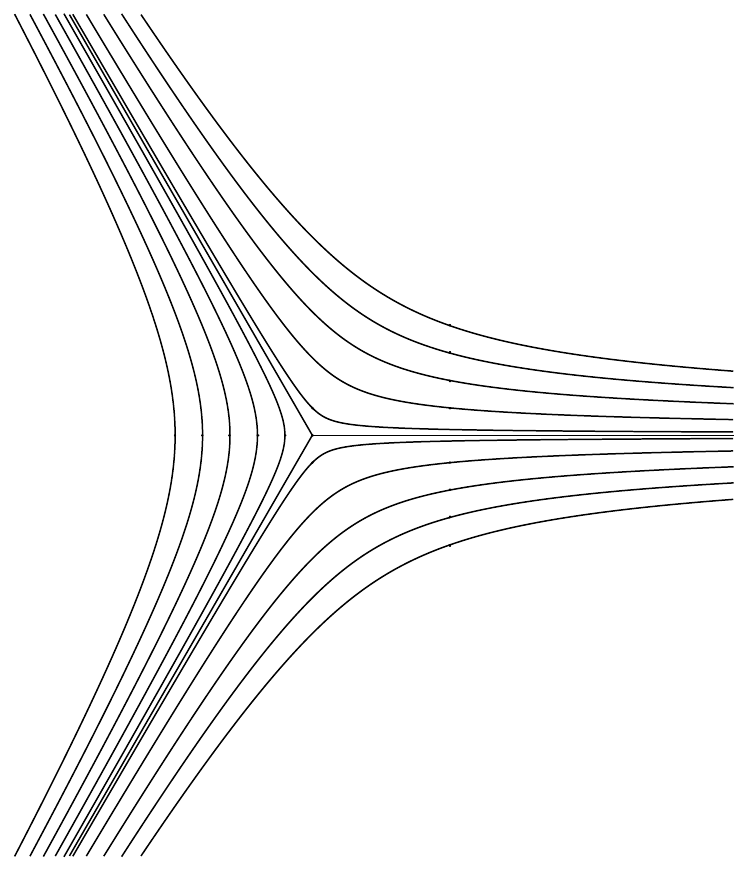}
	\end{minipage}\hfill
\begin{minipage}[b]{0.48\linewidth} \includegraphics[scale=0.4]{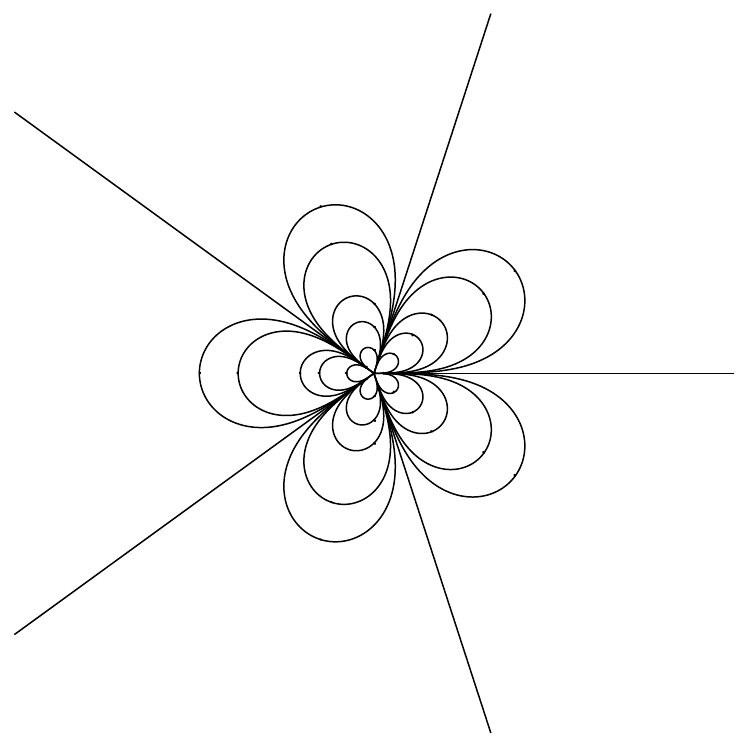}
	\end{minipage}
\caption{Local structure of the trajectories near a simple zero (left) and a
pole of order $7$ (right).}%
\label{FIG0}%
\end{figure}

In the large, any trajectory of $\varpi_{a,\theta}$ is either an arc
connecting two critical points, or, is an arc that diverges to $\infty$ at
least in one of its rays. Thanks to Jenkins Three pole Theorem, the quadratic
differential $\varpi_{a,\theta}$ has no recurrent trajectory ( trajectory that
is dense in some domain of $%
\mathbb{C}
$). The structure of the set $\widehat{%
\mathbb{C}
}\setminus\Gamma_{a,\theta}$ depends on the local and global behaviors of
trajectories. It consists of a finite number of domains called\emph{ domain
configurations} of $\varpi_{a,\theta}.$ Jenkins Theorem (see \cite[Theorem3.5]%
{jenkins},\cite{strebel}) asserts that there are two kinds of domain
configurations of $\varpi_{a,\theta}$ :

\begin{itemize}
\item Half-plane domain : It is swept by trajectories diverging to $\infty$ in
its two ends, and along consecutive critical directions. Its boundary consists
of a path with two unbounded critical trajectories, and possibly a finite
number of short ones. It is conformally mapped to a vertical half plane
$\left\{  w\in%
\mathbb{C}
:\Re w>c\right\}  $ for some real $c$ by the function $\int_{z_{0}}%
^{z}e^{i\theta}\sqrt{p_{a}\left(  t\right)  }dt$ with suitable choices of
$z_{0}\in%
\mathbb{C}
$ and the branch of the square root;

\item strip domain : It is swept by trajectories which both ends tend
$\infty.$ Its boundary consists of a disjoint union of two paths, each of them
consisting of two unbounded critical trajectories diverging to $\infty,$ and
possibly a finite number of short trajectories. It is conformally mapped to a
vertical strip $\left\{  w\in%
\mathbb{C}
:c_{1}<\Re w<c_{2}\right\}  $ for some reals $c_{1}$ and $c_{2}$ by the
function $\int_{z_{0}}^{z}e^{i\theta}\sqrt{p_{a}\left(  t\right)  }dt$ with
suitable choices of $z_{0}\in%
\mathbb{C}
$ and the branch of the square root.
\end{itemize}

A necessary condition for the existence of a short trajectory connecting two
finite critical points of $\varpi_{a,\theta}$ is the existence of a Jordan arc
$\gamma$ connecting them, such that
\begin{equation}
\Re\int_{\gamma}e^{i\theta}\sqrt{p_{a}\left(  t\right)  }dt=0.
\label{cond necess}%
\end{equation}
However, this condition is not sufficient in general as we will see in
theorem.\ref{main thm}. A powerful tool which will be used in our
investigation is the following lemma :

\begin{lemma}
[Teichm\"{u}ller ]\label{teich lemma}Let $\mathcal{C}$ be a closed curve
formed by a finite number of parts of horizontal and/or vertical trajectories
of $\varpi_{a,\theta}$ (with their endpoints). Let $z_{j}$ be the critical
points on $\mathcal{C},$ $\theta_{j}$ be the corresponding interior angles at
vertices $z_{j},$ and $n_{j}$ be the multiplicities of $z_{j}.$Then%
\begin{equation}
\sum_{j}\left(  1-\frac{\left(  n_{j}+2\right)  \theta_{j}}{2\pi}\right)
=2+\sum_{i}m_{i} \label{teich equality}%
\end{equation}
where $m_{i}$ are the multiplicities of critical points inside $\mathcal{C}.$
\end{lemma}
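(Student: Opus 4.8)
The plan is to recognize Lemma \ref{teich lemma} as the Gauss--Bonnet formula for the singular flat metric $|p_a(z)|^{1/2}|dz|$ induced by $\varpi_{a,\theta}$, and to obtain the identity by accounting separately for the interior curvature, the geodesic boundary, and the corner angles, all read off in the natural parameter. First I would introduce the natural parameter $w(z)=\int^z e^{i\theta}\sqrt{p_a(t)}\,dt$, in which $|dw|=|p_a(z)|^{1/2}|dz|$ is the Euclidean metric, horizontal trajectories are the lines $\Im w=\mathrm{const}$, and vertical trajectories are the lines $\Re w=\mathrm{const}$. Both families are therefore geodesics, so the boundary $\mathcal{C}$, being a finite concatenation of horizontal and vertical trajectory arcs, is piecewise geodesic with vanishing geodesic curvature along each smooth piece.

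Next I would record the local model at a finite critical point. Near a zero $z_0$ of $p_a$ of multiplicity $m$ one has $\sqrt{p_a(z)}\sim c(z-z_0)^{m/2}$, hence $w(z)-w(z_0)\sim C(z-z_0)^{(m+2)/2}$; thus $z\mapsto w$ multiplies angles at $z_0$ by the factor $(m+2)/2$. Consequently the full angle $2\pi$ in the $z$-plane unfolds into a cone angle $(m+2)\pi$ in the flat metric, so such a cone point carries concentrated curvature $2\pi-(m+2)\pi=-m\pi$, while an interior angle $\theta_j$ of $\mathcal{C}$ at a vertex $z_j$ of multiplicity $n_j$ becomes the flat-metric interior angle $(n_j+2)\theta_j/2$.

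Let $\Omega$ be the region bounded by $\mathcal{C}$. Since $\mathcal{C}$ is a closed curve in the finite plane enclosing only finite critical points (no infinite critical point lies inside), $\Omega$ is a topological disk and $\chi(\Omega)=1$. Applying Gauss--Bonnet for a flat surface with cone singularities and piecewise-geodesic boundary,
\[
\sum_i\left(2\pi-(m_i+2)\pi\right)+\int_{\partial\Omega}k_g\,ds+\sum_j\left(\pi-\frac{(n_j+2)\theta_j}{2}\right)=2\pi\chi(\Omega),
\]
and substituting $k_g\equiv 0$, the interior terms $-\pi m_i$, and $\chi(\Omega)=1$, I would obtain
\[
-\pi\sum_i m_i+\sum_j\left(\pi-\frac{(n_j+2)\theta_j}{2}\right)=2\pi.
\]
Dividing through by $\pi$ rearranges exactly into \eqref{teich equality}.

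The main obstacle is the rigorous justification of the singular Gauss--Bonnet step: one must verify that the metric is genuinely flat away from the zeros (so that all curvature is concentrated at the interior cone points), confirm that the order-$7$ pole at $\infty$ does not lie in $\Omega$ so the cone bookkeeping is complete, and treat the corners carefully by excising small geodesic sectors around each $z_j$ and passing to the limit. A self-contained alternative that sidesteps invoking a singular Gauss--Bonnet theorem is to develop each boundary arc into the $w$-plane, where its image is a closed rectilinear polygon; the classical turning-number relation (total turning $2\pi$ for a simple, positively oriented polygon), combined with the angle-unfolding factor $(n_j+2)/2$ at each vertex and the curvature defect $-m_i\pi$ at each enclosed zero, yields the same identity.
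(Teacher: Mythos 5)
Your proof is correct, but note first that the paper offers no proof of this lemma to compare against: it is Teichm\"{u}ller's classical lemma, stated and used as a known result from the quadratic-differential literature (it is Theorem 14.1 in \cite{strebel}; see also \cite{jenkins}). On its own merits, your Gauss--Bonnet derivation is sound: the metric $\left\vert p_{a}(z)\right\vert ^{1/2}\left\vert dz\right\vert$ is flat away from the zeros, horizontal and vertical arcs are geodesics, a zero of multiplicity $m$ is a cone point of angle $(m+2)\pi$ (curvature defect $-m\pi$), a $z$-plane corner angle $\theta_{j}$ at a point of multiplicity $n_{j}$ unfolds to the metric angle $(n_{j}+2)\theta_{j}/2$, and since $\mathcal{C}$ lies in the finite plane the enclosed region is a disk avoiding the order-$7$ pole at $\infty$, so $\chi=1$; substituting into Gauss--Bonnet with vanishing geodesic curvature and dividing by $\pi$ gives exactly (\ref{teich equality}). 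Two remarks. First, the corner sum must run over \emph{all} vertices of $\mathcal{C}$, including corners at regular points where a horizontal arc meets a vertical arc ($n_{j}=0$, $\theta_{j}=\pi/2$ or $3\pi/2$, contributing $\pm1/2$); your argument handles these automatically, and indeed the paper needs this case in the proof of Theorem \ref{main thm}, where the path $\gamma$ has regular corners at $m_{\pm1}$, so the paper's phrase \textquotedblleft critical points on $\mathcal{C}$\textquotedblright\ must be read as \textquotedblleft vertices.\textquotedblright\ Second, your self-contained fallback --- developing the boundary into the $w$-plane, applying the turning-number relation for a simple closed polygon, and accounting for the angle-unfolding factor $(n_{j}+2)/2$ at vertices and the enclosed zeros via the argument principle --- is essentially the classical proof given by Strebel, so either route is legitimate; the Gauss--Bonnet phrasing buys conceptual clarity at the cost of invoking the theory of flat metrics with conical singularities, while the development argument is elementary and uses only the local normal form $w\sim C\left(  z-z_{0}\right)  ^{(m+2)/2}$.
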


\begin{remark}
As an immediate observation from lemma \ref{teich lemma} is that
$\varpi_{a,\theta}$ has at most two short trajectories connecting two
different pairs of distinct finite critical points.
\end{remark}

\begin{remark}
\label{contra}Let $\Im a>0$ and $\alpha,\beta,\gamma\in%
\mathbb{R}
,\alpha<\beta\leq\gamma<\delta.$ Then:
\[
0<\arg\int_{\gamma}^{\delta}\sqrt{p_{a}\left(  t\right)  }dt<\arg\int_{\alpha
}^{\beta}\sqrt{p_{a}\left(  t\right)  }dt<\frac{\pi}{2},
\]
where the arguments are taken in $\left[  -\pi,\pi\right[  ,$ the square roots
such that $\sqrt{1}=1,$ and it is straightforward that the two properties
\begin{align}
\Re\left(  e^{i\theta}\int_{\alpha}^{\beta}\sqrt{p_{a}\left(  t\right)
}dt\right)   &  =0,\label{p1}\\
\Re\left(  e^{i\theta}\int_{\gamma}^{\delta}\sqrt{p_{a}\left(  t\right)
}dt\right)   &  =0 \label{p2}%
\end{align}
cannot hold together.
\end{remark}

\begin{lemma}
\label{lemme1}For any $a\in%
\mathbb{C}
\setminus%
\mathbb{R}
,$ a short trajectory of $\varpi_{a,\theta}$ if exists, it is entirely
included in the half plane bordered by $%
\mathbb{R}
$ and containing $a.$
\end{lemma}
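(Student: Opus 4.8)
The plan is to exploit the sign analysis of $\Re f_{a,\theta}$ along paths crossing the real axis, combined with the fact that a short trajectory is a level curve of the harmonic function $u(z)=\Re\int^{z}e^{i\theta}\sqrt{p_a(t)}\,dt$. Since $a\notin\mathbb{R}$, say $\Im a>0$, the three finite critical points are $-1$, $+1$ (on the real axis) and $a$ (in the open upper half-plane). A short trajectory connects two of these critical points. I would argue that no such trajectory can dip into the lower half-plane, which forces it to stay in the closed upper half-plane, and in fact in the open upper half-plane except at the endpoints $\pm1$.

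First I would set up the reflection symmetry. The polynomial $p_a(z)=(z-a)(z^2-1)$ satisfies $\overline{p_a(\bar z)}=p_{\bar a}(z)$, so complex conjugation together with $a\mapsto\bar a$ maps trajectories of $\varpi_{a,\theta}$ to trajectories of $\varpi_{\bar a,-\theta}$ (equivalently $\varpi_{\bar a,\pi-\theta}$). This is the wrong differential, so reflection alone does not preserve the foliation; instead I would use it only to control behaviour on the real axis itself. The key quantitative input is the remark preceding the lemma (Remark \ref{contra}) and the argument estimates in the proof of Lemma \ref{level1}: for $\Im a>0$, the quantity $\arg\int_{\alpha}^{\beta}\sqrt{p_a(t)}\,dt$ lies strictly between $0$ and $\pi/2$ for any real interval $[\alpha,\beta]$ avoiding the branch structure, so $\int_{\alpha}^{\beta}\sqrt{p_a(t)}\,dt$ lies in the open first quadrant.

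The core step is to show a short trajectory cannot cross $\mathbb{R}$ transversally into the lower half-plane. Suppose a short trajectory $\gamma$ (a level set of $u$) meets the real axis at a regular point $x_0\in\mathbb{R}\setminus\{\pm1\}$. A trajectory is the locus $-e^{2i\theta}p_a(z)\,dz^2>0$; at a real point $x_0$ the real tangent direction $dz$ making $-e^{2i\theta}p_a(x_0)dz^2$ positive is determined by $\arg dz=\tfrac{\pi}{2}-\theta-\tfrac12\arg p_a(x_0)\pmod\pi$. Using $\Im a>0$ one computes $0<\arg p_a(x_0)<\pi$ (since $\arg(x_0-a)\in(-\pi,0)$ combines with $z^2-1$ real), so the admissible trajectory direction at $x_0$ is never horizontal; hence every trajectory hitting the real axis does so transversally and immediately enters one half-plane. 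I would then show that if $\gamma$ entered the lower half-plane it would have to return, producing a sub-arc from $x_0$ back to the real axis bounding (with a real segment) a region; integrating $e^{i\theta}\sqrt{p_a}$ around the resulting closed contour and taking real parts gives two relations of the type \eqref{p1}--\eqref{p2}, which Remark \ref{contra} forbids. This contradiction confines $\gamma$ to one closed half-plane.

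Finally I would pin down \emph{which} half-plane. Because the finite critical point $a$ lies in the upper half-plane and a short trajectory must have $a$ or one of $\pm1$ as an endpoint, the half-plane containing the trajectory is forced to be the upper one whenever $a$ is an endpoint; when both endpoints are $\pm1$, I would use the period estimate from \eqref{strip sigma theta} (equivalently the first-quadrant location of $\int_{-1}^{1}\sqrt{p_a}$) to rule out the lower half-plane via the same Remark \ref{contra} mechanism, and symmetry $a\mapsto\bar a$ covers $\Im a<0$. The main obstacle I anticipate is the crossing argument itself: showing rigorously that a level curve entering the lower half-plane must re-cross $\mathbb{R}$ and that the two crossing integrals are genuinely of the incompatible form in Remark \ref{contra}; handling the degenerate possibility that $\gamma$ runs \emph{along} part of the real axis, or approaches $\pm1$ tangentially, will require the local angle count ($r+2=3$ equally spaced prongs at each simple zero) to check that no prong lies along $\mathbb{R}$ for $\Im a\neq0$.
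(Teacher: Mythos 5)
Your overall strategy (level-curve crossings plus the incompatibility of vanishing real parts on real intervals) points in the same direction as the paper, but your core step has a genuine gap: a single excursion of the trajectory into the lower half-plane produces only \emph{one} relation, not two. Concretely, if the short trajectory crosses $\mathbb{R}$ at $x_{0}$, dips below, and returns at $x_{1}$, then the dip arc together with the segment $\left[x_{0},x_{1}\right]$ bounds a region free of critical points (since $a$ lies above $\mathbb{R}$), and Cauchy's theorem converts the trajectory condition into the single identity $\Re\bigl(e^{i\theta}\int_{\left[x_{0},x_{1}\right]}\sqrt{p_{a}(t)}\,dt\bigr)=0$. Remark \ref{contra} does not forbid one such relation; it forbids \emph{two} of them on intervals $\left[\alpha,\beta\right]$, $\left[\gamma,\delta\right]$ with $\alpha<\beta\leq\gamma<\delta$. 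A single relation is in fact satisfiable for every $\theta\in\left]0,\pi/2\right[$ with $\Im a>0$ --- that is exactly what makes $\Sigma_{\theta}$ non-empty --- so your closed-contour step yields no contradiction, except in the special case $\theta=0$, where $\arg\int_{\alpha}^{\beta}\sqrt{p_{a}(t)}\,dt\in\left]0,\pi/2\right[$ already excludes a vanishing real part.

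The missing idea is where the \emph{second} relation comes from, and that is the actual content of the paper's proof: it is extracted from the rest of the critical graph, not from the dip alone. The paper supposes the short trajectory (say from $z=1$ to $z=a$) meets $\mathbb{R}$ at some $b$, and argues by cases: if some other critical trajectory also cuts $\mathbb{R}$ at a point $c$, the two crossings give the incompatible pair of Remark \ref{contra}; if no other critical trajectory cuts $\mathbb{R}$, then Teichm\"{u}ller's Lemma \ref{teich lemma} forces two of the critical trajectories emanating from $z=1$ to diverge to $\infty$ in the lower half-plane, and one of them must cross $\mathbb{R}$ at some point $\gamma>1$, again producing the forbidden pair of relations on $\left[b,1\right]$ and $\left[1,\gamma\right]$. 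Your proposal never invokes Teichm\"{u}ller's lemma in this global role (you only mention the local prong count), so this case cannot be closed. A secondary, fixable issue: your transversality claim (``the trajectory direction at a real point is never horizontal'') is false for general $\theta$: horizontality at $x_{0}$ means $\arg p_{a}(x_{0})\equiv\pi-2\theta \pmod{2\pi}$, which is impossible for $\theta=0$ (since $p_{a}(x_{0})\notin\mathbb{R}$) but can occur when $\theta\neq0$. Fortunately the argument needs only the existence of crossing points, which follows from connectedness of the arc, not transversality.
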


\begin{proof}
Suppose that a short trajectory of $\varpi_{a,\theta}$ connects $z=1$ to
$z=a,\Im a>0,$ going through some point $b$ in the real line, for example
$b<-1.$ If a critical trajectory of $\varpi_{a,\theta}$ cuts the real line in
some point $c,$ then, putting $\alpha=c,\beta=b,$ and $\gamma=1,$ we get the
same contradiction as in remark.\ref{contra}. If not, the two remaining
critical trajectories should diverge to $\infty$ in the directions
$D_{\theta,1}$ and $D_{\theta,2}.$ Lemma.\ref{teich lemma} insures that two
remaining critical trajectories emerging from $z=1$ should diverge to $\infty$
in the lower half plane, one of them, cuts the real line in some $\gamma>1.$
Again, $\alpha=b,\beta=1,$ and $\gamma$ give the same contradiction.
\end{proof}

\begin{notation}
\label{sets nota}For $\theta\in\left[  0,\pi/4\right]  ,$ we denote by

\begin{itemize}
\item $S_{\pm1,\theta}:$ the sets $\Sigma_{\pm1,\theta}$ minus the arc
starting at $e_{\theta}$ (if exists), and diverging to $\infty$ in the upper
half plane;

\item $S_{\theta}:$ the part of $\Sigma_{\theta}$ starting at $t_{\theta}$ and
diverging to $\infty;$

\item $\Xi_{\theta}:S_{1,\theta}\cup S_{-1,\theta}\cup S_{\theta;}$

\item $n_{\theta}:$ the finite number ($n_{0}=8,n_{arct\left(  0.5\right)
/2}=10,$and $n_{\pi/4}=9$) of the connected components $\Omega_{1}%
,...,\Omega_{n_{\theta}}$ of $%
\mathbb{C}
\setminus\left(  S_{\pm1,\theta}\cup S_{\theta}\right)  .$ See
Fig.\ref{FIG900}
\end{itemize}
\end{notation}

\begin{figure}[tbh]
\begin{minipage}[b]{0.48\linewidth}
		\centering\includegraphics[scale=0.28]{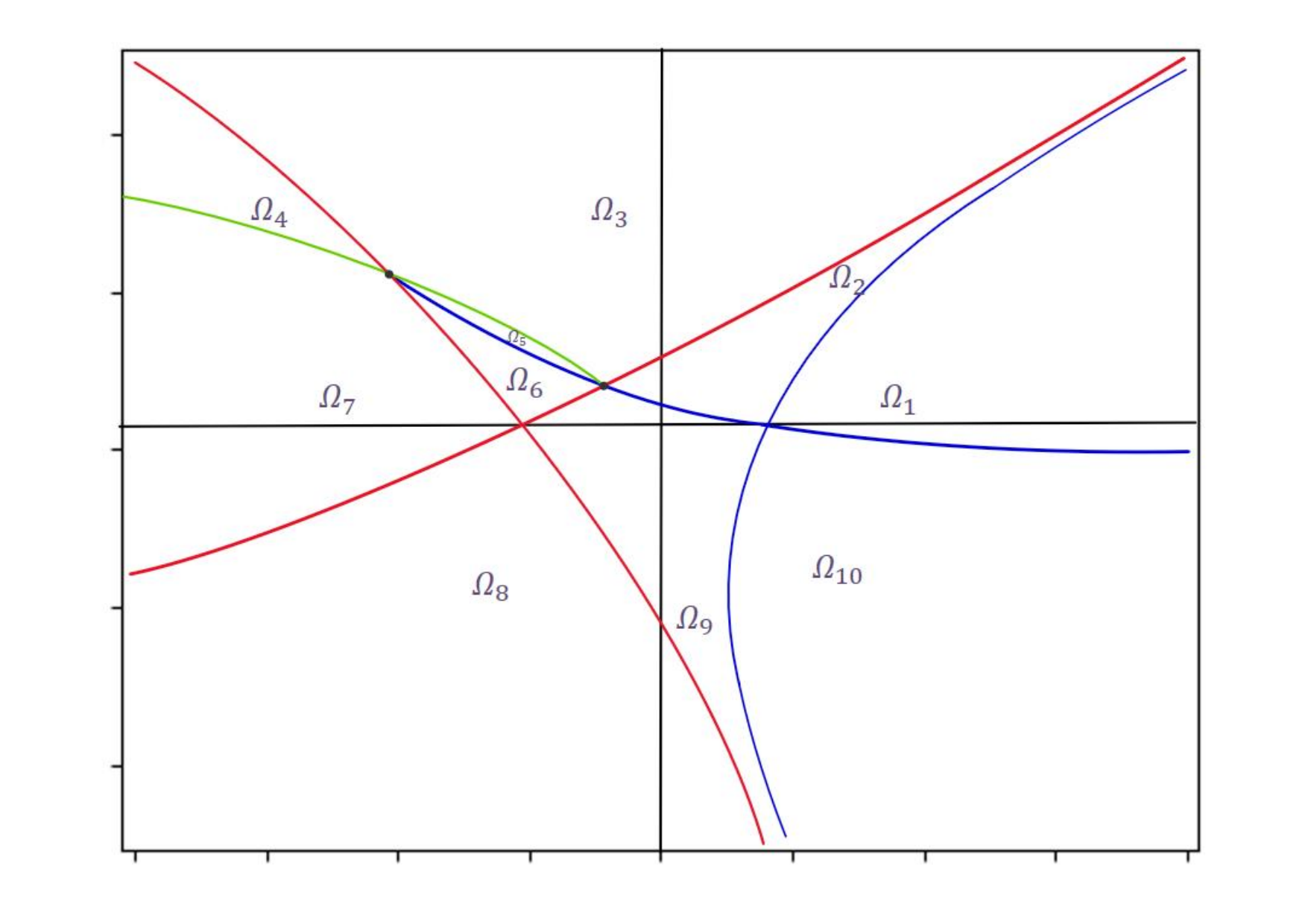}
	\end{minipage}\hfill
\begin{minipage}[b]{0.48\linewidth} \includegraphics[scale=0.28]{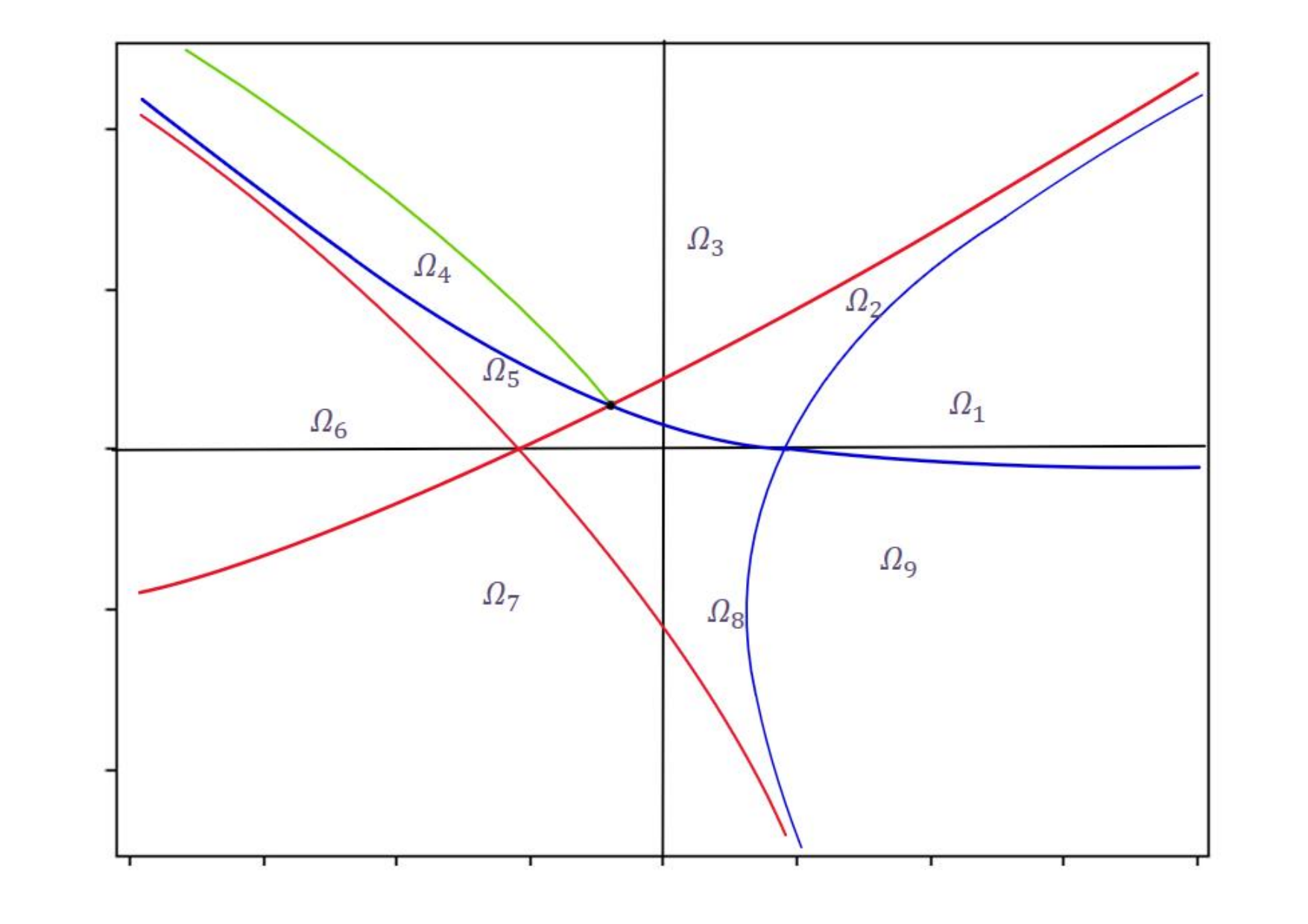}
	\end{minipage}
\caption{Sets $S_{\pm1,\theta}$ and $S_{\theta}$ for $\theta\in\left[
0,\pi/8\right[  $ (left) and $\theta\in\left[  \pi/8,\pi/4\right]  $ (right).}%
\label{FIG900}%
\end{figure}

The main result of this section is the following :

\begin{theorem}
\label{main thm}Let $\theta\in\left[  0,\pi/2\right[  .$ In each of the
domains $\Omega_{1},...,\Omega_{n_{\theta}},$ the critical graph
$\Gamma_{a,\theta}$ of the quadratic differential $\varpi_{a,\theta}$ has the
same structure; it splits the Riemann sphere into five half-plane and two
strip domains. Moreover, $\varpi_{a,\theta}$

\begin{itemize}
\item has a short trajectory connecting $z=\pm1$ if and only if $a\in
S_{\theta};$ it has a short trajectory connecting $z=\pm1$ to $z=a$ if and
only if $a\in S_{\pm1,\theta}.$ In all these cases, $\Gamma_{a,\theta}$ splits
the Riemann sphere into five half-plane domains and exactly one strip domain;

\item It has a tree (juxtaposition of two short trajectories) with summit
$t_{\theta};$ it has a tree with summit $z=-1$ or $z=+1$ respectively for
$\theta\in$ $\left[  0,\pi/8\right[  $ and $\theta\in$ $\left[  3\pi
/8,\pi/2\right[  .$ In these cases, $\Gamma_{a,\theta}$ splits the Riemann
sphere into five half-plane domains.
\end{itemize}

In particular, any change of a critical graph structure should pass by a
critical graph with at least one short trajectory.
\end{theorem}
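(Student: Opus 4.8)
The plan is to prove this structural theorem by combining the local picture of trajectories near each critical point with the global "flow" information encoded in the level sets $\Sigma_{\pm1,\theta}$, $\Sigma_\theta$ constructed in Lemma~\ref{level1} and Proposition~\ref{level2}, and then to close every case with the Teichm\"uller relation (Lemma~\ref{teich lemma}). The key organizing observation is that the necessary condition \eqref{cond necess} for a short trajectory between a pair of finite critical points is exactly membership of $a$ in the corresponding set: $a\in S_\theta$ for a $(-1,+1)$-connection, and $a\in S_{\pm1,\theta}$ for a $(\pm1,a)$-connection. So the proof naturally divides into (i) showing that in the \emph{open} domains $\Omega_k$ no short trajectory can exist and the graph is rigid, and (ii) analyzing what happens exactly on the curves $\Xi_\theta=S_{1,\theta}\cup S_{-1,\theta}\cup S_\theta$ and at their intersection point $t_\theta$.

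I would begin by establishing the \emph{generic} (open-domain) structure. Since $\infty$ is a pole of order $7$, there are $5$ critical directions $D_{\theta,k}$ from \eqref{critcal directions}, and each of the three simple zeros $\pm1,a$ emits three critical trajectories. A counting argument---five asymptotic directions must be filled, seven zero-emanations must be accounted for---forces the decomposition into five half-plane domains and two strip domains whenever there is \emph{no} short trajectory. The rigidity (that the combinatorial type is constant across each $\Omega_k$) follows because the domain configuration can only change when a critical trajectory becomes short, which by \eqref{cond necess} happens precisely when $a$ crosses $\Xi_\theta$; within a connected component of the complement no such crossing occurs, and the sign data of $\Re f_{\pm1,\theta}$, $\Re f_\theta$ (shown locally constant on each $\Omega_k$ at the end of Proposition~\ref{level2}) pins down which trajectories go to which critical directions. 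Here Lemma~\ref{lemme1} is used to confine short trajectories (when $a\notin\mathbb{R}$) to the half-plane containing $a$, and Remark~\ref{contra} together with Remark after Lemma~\ref{teich lemma} caps the number of simultaneous short trajectories at two.

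Next I would handle the three single-connection cases: $a\in S_\theta$ (the $\pm1$ connection), and $a\in S_{\pm1,\theta}$ (the $\pm1$-to-$a$ connections), \emph{away} from $t_\theta$. Membership gives \eqref{cond necess}, providing the candidate arc; to upgrade this necessary condition to an actual short trajectory I would show the candidate arc is genuinely a single horizontal trajectory (not merely a path with vanishing real period) by invoking that the level set of the harmonic function cannot terminate at a regular point and that the relevant straight segment realizes the geodesic. One short trajectory consumes one pair of zero-emanations, merging what were two strip domains into exactly one strip domain while keeping five half-plane domains---again verified by the Teichm\"uller count \eqref{teich equality} applied to the bounding closed curve. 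The mutually exclusive nature of the three cases (no two can hold at once for the same $a$, except at $t_\theta$) follows from Remark~\ref{contra} and Proposition~\ref{level2}'s intersection table.

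Finally, the \textbf{main obstacle} is the \emph{tree case} at $t_\theta$ and the endpoint trees at $z=\pm1$. At $a=t_\theta$ two of the defining conditions hold simultaneously ($\Re f_{1,\theta}=\Re f_{-1,\theta}=0$), so there are two short trajectories meeting at a common vertex---a "tree". The delicate point is to verify that both candidate arcs coexist as a single connected critical configuration and that this does \emph{not} create a strip domain: the Teichm\"uller relation must be checked on the closed curve formed by juxtaposing the two short trajectories with an infinite critical trajectory, yielding five half-plane domains and \emph{zero} strip domains. The endpoint trees for $\theta\in[0,\pi/8[$ (summit $z=-1$, corresponding to $a=e_\theta$ where $\Sigma_{1,\theta}^l$ meets $\Sigma_{-1,\theta}^l$) require the same juxtaposition analysis but with the common vertex being a zero rather than the free point $t_\theta$, so the interior-angle bookkeeping in \eqref{teich equality} changes. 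I expect the bulk of the technical effort to lie in confirming that the necessary condition \eqref{cond necess} is \emph{sufficient} in exactly these configurations---ruling out the degenerate possibility that a candidate arc is obstructed by a third critical trajectory---and this is where the careful monotonicity estimates \eqref{monotony of sigmatheta} on the parametrization of $\Sigma_\theta$, combined with the confinement lemma, do the decisive work. The closing sentence (any change of structure passes through a graph with a short trajectory) is then immediate: it is the contrapositive of the open-domain rigidity, since $\Xi_\theta$ is exactly the locus where short trajectories appear.
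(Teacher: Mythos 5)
Your plan contains a genuine gap, and it sits exactly at the crux of the theorem. Your "key organizing observation" --- that condition (\ref{cond necess}) is \emph{exactly} membership of $a$ in $S_\theta$ (resp.\ $S_{\pm1,\theta}$) --- is false. Condition (\ref{cond necess}) characterizes membership in the full level sets $\Sigma_\theta$ (resp.\ $\Sigma_{\pm1,\theta}$), of which the sets $S_\theta$, $S_{\pm1,\theta}$ of Notation \ref{sets nota} are \emph{proper} subsets: $\Sigma_\theta$ contains the arc from $s_\theta$ to $t_\theta$, which lies inside one of the domains $\Omega_k$, and $\Sigma_{\pm1,\theta}$ contains (for $\theta\in[0,\pi/8[$) the arc beyond $e_\theta$ that is removed in forming $S_{\pm1,\theta}$. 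The paper states explicitly, just before the theorem, that (\ref{cond necess}) is necessary but \emph{not} sufficient; the entire content of the theorem is to determine on which portion of the $\Sigma$-curves short trajectories actually occur. Under your premise, the rigidity of the structure on each $\Omega_k$ ("the configuration can only change when $a$ crosses $\Xi_\theta$, by (\ref{cond necess})") becomes circular: what must be proved is precisely that crossing $\Sigma_\theta\setminus S_\theta$ or $\Sigma_{\pm1,\theta}\setminus S_{\pm1,\theta}$ produces \emph{no} short trajectory and no change of structure, even though (\ref{cond necess}) holds there.

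Consequently your proposal is missing the paper's two decisive devices, and the substitutes you offer do not work. First, to exclude short trajectories on $\Sigma_\theta$ between $s_\theta$ and $t_\theta$ (and, mutatis mutandis, on the arcs beyond $e_\theta$), the paper integrates $e^{i\theta}\sqrt{p_a}$ along a closed-up path made of two infinite critical trajectories emerging from $\pm1$ joined by an arc of a \emph{vertical} trajectory near $\infty$; the critical-trajectory pieces contribute zero real part, so the integral reduces to the nonzero contribution of the vertical arc, contradicting (\ref{cond necess}) (see (\ref{path}) and Fig.\ \ref{FIG0033}). Second, Hausdorff-metric continuity of $\Gamma_{a(s),\theta}$ along the parametrization of $\Sigma_\theta$ is what locates the transition exactly at $a(s_0)=t_\theta$ and simultaneously produces the tree there; the converse direction (existence of the short trajectory for $s>s_0$) is again by contradiction through a forced strip domain and the same path integration. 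Your proposed tools --- that level sets of harmonic functions cannot terminate at a regular point, that "the relevant straight segment realizes the geodesic", and the monotonicity bounds (\ref{monotony of sigmatheta}) --- all live in the parameter ($a$-)plane, whereas existence or non-existence of a short trajectory must be decided in the $z$-plane; they cannot upgrade (\ref{cond necess}) to sufficiency, nor rule it out where it fails. (Two smaller points: each of the three simple zeros emits three critical trajectories, so there are nine, not seven, zero-emanations; and the paper's proof of constancy on each $\Omega_i$ is not by sign bookkeeping but by showing the equivalence classes of the "same structure" relation are closed in $\Omega_i$, via locally uniform convergence of $\sqrt{p_{b_n}}$ and Hausdorff limits of critical trajectories, then invoking connectedness.)
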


\begin{proof}
We begin by the observation that if $\gamma$ is a short trajectory of
$\varpi_{a,\theta}$ connecting $\pm1,$ then $z=a$ belongs to the part of
$\Sigma_{\theta}$ included in the exterior of the open bounded domain bordered
by $\gamma\cup\left[  -1,1\right]  ;$ otherwise, all critical trajectories
emerging from $a$ should cut the segment $\left[  -1,1\right]  ,$ and we get
the same contradiction as in the proof of lemma.\ref{lemme1}. It follows that
$\gamma$ is homotopic to $\left[  -1,1\right]  $ in $%
\mathbb{C}
\setminus\left\{  a\right\}  ,$ and then $a\in\Sigma_{\theta}.$ Let
parameterize $\Sigma_{\theta},$ for $\theta\in\left]  0,\frac{\pi}{4}\right]
,$ by :
\[
\Sigma_{\theta}=\left\{  a=a\left(  s\right)  ;s\geq0\right\}  \subset\left\{
z\in%
\mathbb{C}
:\Re z\leq0,\Im z\geq0\right\}  ,a\left(  0\right)  =s_{\theta}.
\]
Since $\left]  -1,1\right[  \cap\Sigma_{\pm1,\theta}=\emptyset,$ the quadratic
differential $\varpi_{s_{\theta},\theta}$ has no short trajectory connecting
$\pm1;$ more precisely, if $\varpi_{s_{\theta},\theta}$ has only one critical
trajectory emerging from $z=s_{\theta}$ that diverges to $\infty$ in lower
half-plane in a direction $D_{\theta,k},k\in\left\{  3,4\right\}  ,$ then,
lemma.\ref{teich lemma} asserts that two critical trajectories $\gamma_{1}$
and $\gamma_{-1}$ emerging respectively from $z=1$ and $z=-1,$ should diverge
to $\infty$ in the same direction $D_{\theta,k}.$ From the local behavior of
the trajectories at $\infty,$ there exists a neighborhood $\mathcal{V}$ of
$\infty,$ such that any vertical trajectory in $\mathcal{V}$ has its two rays
diverging to $\infty$ following the two adjacent critical directions
$D_{\theta,k}^{\perp}$ and $D_{\theta,\left(  k+1\right)  \operatorname{mod}%
(5)}^{\perp}.$ Let be $\sigma$ such a vertical trajectory. It is obvious that
$\sigma$ intersects $\gamma_{1}$ and $\gamma_{-1}$ respectively in two points
$z=m_{1}$ and $z=m_{-1}.$ Consider the path $\gamma$ connecting $z=1$ and
$z=-1$ formed by, the part of $\gamma_{1}$ from $z=1$ to $z=m_{1},$ the part
of $\sigma$ from $z=m_{1}$ to $z=m_{-1},$ and the part of $\gamma_{-1}$ from
$z=m_{-1}$ to $z=-1;$ see Fig.\ref{FIG0033} (left). \begin{figure}[tbh]
\begin{minipage}[b]{0.4\linewidth}
		\centering\includegraphics[scale=0.465]{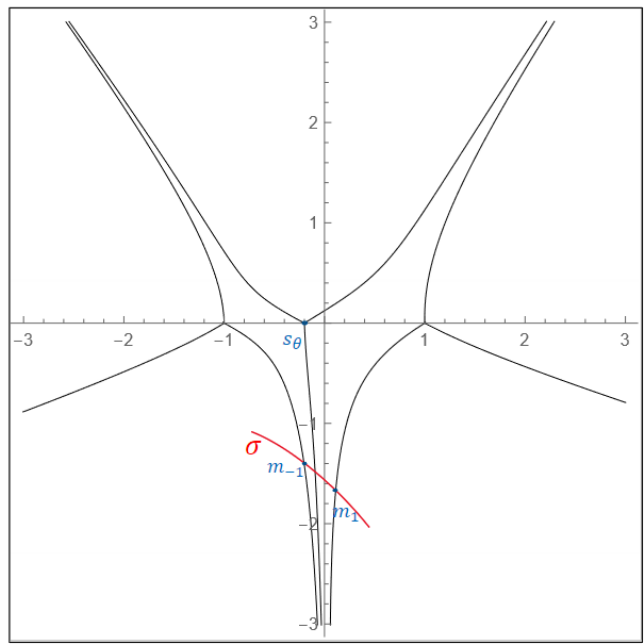}
	\end{minipage}\hfill
\begin{minipage}[b]{0.4\linewidth} \includegraphics[scale=0.5]{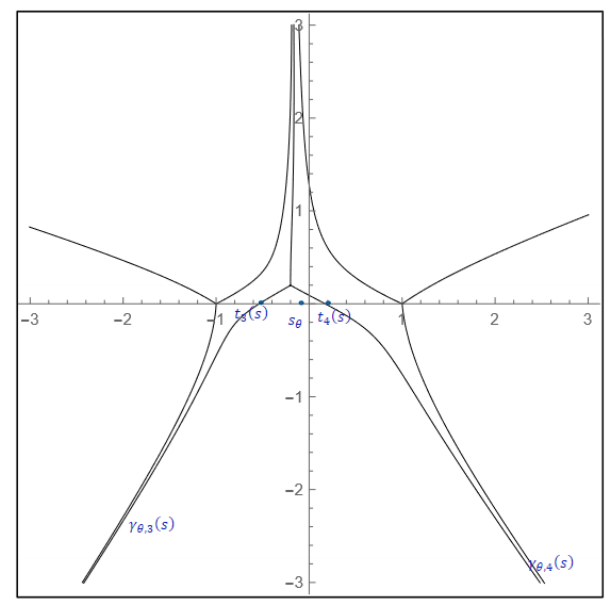}
	\end{minipage}
\caption{Path $\gamma$ (left), $\gamma_{\theta,3}\left(  s\right)  $ and
$\gamma_{\theta,4}\left(  s\right)  $ (right).}%
\label{FIG0033}%
\end{figure}Integrating along $\gamma,$ we get
\begin{align}
\Re\int_{\gamma}e^{i\theta}\sqrt{p_{a}\left(  t\right)  }dt  &  =\Re\int
_{1}^{m_{1}}e^{i\theta}\sqrt{p_{a}\left(  t\right)  }dt+\Re\int_{m_{1}%
}^{m_{-1}}e^{i\theta}\sqrt{p_{a}\left(  t\right)  }dt+\Re\int_{m_{-1}}%
^{-1}e^{i\theta}\sqrt{p_{a}\left(  t\right)  }dt\label{path}\\
&  =\Re\int_{m_{1}}^{m_{-1}}e^{i\theta}\sqrt{p_{a}\left(  t\right)  }%
dt\neq0;\nonumber
\end{align}
which violates (\ref{cond necess}); more precisely
\[
\Re\int_{\gamma}e^{i\theta}\sqrt{p_{a}\left(  t\right)  }dt=\Re\int_{\left[
-1,1\right]  }e^{i\theta}\left(  \sqrt{p_{a}\left(  t\right)  }\right)
_{-}dt=0.
\]
Thus, critical trajectories of $\varpi_{s_{\theta},\theta},$ emerging from
$s_{\theta},$ say $\gamma_{\theta,1}\left(  0\right)  ,\gamma_{\theta
,3}\left(  0\right)  $ and $\gamma_{\theta,4}\left(  0\right)  ,$ diverge to
$\infty$ respectively in the directions $D_{\theta,1},D_{\theta,3}$ and
$D_{\theta,4}.$ Moreover, by continuity (in the Haussd\"{o}rff metric) of the
critical graph $\Gamma_{s_{\theta},\theta}$ with respect to $s\geq0,$ there
exists $s_{0}>0,$ ($s_{0}$ is taken maximum) such that, for $0<s<s_{0},$
$\gamma_{\theta,3}\left(  s\right)  $ and $\gamma_{\theta,4}\left(  s\right)
$ intercept the real line respectively in $t_{3}\left(  s\right)  \in\left]
-1,s_{\theta}\right[  $ and $t_{4}\left(  s\right)  \in\left]  s_{\theta
},1\right[  ;$ see Fig.\ref{FIG0033} (right). Clearly, if $\gamma_{\theta
,3}\left(  s\right)  $ (resp. $\gamma_{\theta,4}\left(  s\right)  $) changes
its behavior for some $s>0,$ then it should go through $z=-1$ for some
$s_{-1}>0,$ (resp. $s_{1}>0$) and necessarily, $a\left(  s_{-1}\right)
\in\Sigma_{-1,\theta}$ (resp. $a\left(  s_{1}\right)  \in\Sigma_{1,\theta}$).
These facts, besides the same idea of the choice of paths $\gamma$ joining
$z=a$ to $\pm1,$ lead us to prove that, $a\left(  s_{0}\right)  =t_{\theta},$
$\varpi_{t_{\theta},\theta}$ has a tree with summit $t_{\theta},$ and,
$\varpi_{a,\theta}$ has no short trajectory for any $a\in\Omega_{8}\cap
\Sigma_{\theta}.$

Conversely, let $s>s_{0}$ and suppose that $\varpi_{a\left(  s\right)
,\theta}$ has no short trajectory connecting $\pm1.$ The critical graph of
$\varpi_{a\left(  s\right)  ,\theta}$ cannot be the same as in
Fig.\ref{FIG0033} (right), because, since $a\in\Omega_{3}\cup\Omega_{4},$
$\Re\int_{-1}^{a}e^{i\theta}\sqrt{p_{a}\left(  t\right)  }dt$ and $\Re\int
_{1}^{a}e^{i\theta}\sqrt{p_{a}\left(  t\right)  }dt$ both change their signs,
and necessarily we get a strip domain containing $\left]  -1,x\right[  $ for
some $x\in$ $\left]  -1,1\right]  ,$ that is bordered by two critical
trajectories emerging from $z=\pm1.$ Integrating along a path as in
(\ref{path}), we get a similar contradiction.

Now, for $\theta\in\left]  0,\pi/8\right[  ,$ we consider three reals
$u,v_{1},$ and $v_{2}$ such that
\[
u<\Re e_{\theta}<-1;a_{1}=u+iv_{1}\in\Sigma_{\theta};a_{2}=u+iv_{2}\in
\Sigma_{-1,\theta}^{l}.
\]
Let%
\[
a=u+iv,v\in\left]  0,+\infty\right[  .
\]%
\[
a=u+iv=\left\vert a+1\right\vert e^{i\arg\left(  a+1\right)  },v\in\left]
v_{1},v_{2}\right[  ,\arg\left(  a+1\right)  \in\left]  \frac{\pi}{2}%
,\pi\right[  .
\]
We denote by $\gamma_{k}\left(  t\right)  ,k=1,2,3,$ the three critical
trajectories emerging from $z=-1$ :
\[
\gamma_{k}\left(  t\right)  +1=r_{k}\left(  t\right)  e^{i\alpha_{k}\left(
t\right)  },r_{1}\left(  t\right)  >0,\alpha_{k}\left(  t\right)  \in\left[
\frac{-\pi}{2},\frac{3\pi}{2}\right[  ,t>0.
\]
Straightforward calculations give :%
\[
\int_{0}^{t}\sqrt{p_{a}\left(  \gamma_{1}\left(  s\right)  \right)  }%
\gamma_{1}^{\prime}\left(  s\right)  ds\sim\frac{2\sqrt{2}}{3}\left(
\sqrt{a+1}\left(  \gamma_{1}\left(  t\right)  +1\right)  ^{\frac{3}{2}%
}\right)  ,t\longrightarrow0^{+},
\]
and
\[
\arg\left(  e^{i\theta}\sqrt{a+1}\left(  \gamma_{k}\left(  t\right)
+1\right)  ^{\frac{3}{2}}\right)  \equiv\frac{-\pi}{2}\operatorname{mod}%
\left(  \pi\right)  ,t\longrightarrow0^{+}.
\]
Then
\[
\alpha_{k}\left(  0^{+}\right)  =\frac{-\arg\left(  a+1\right)  -2\theta
+\left(  2k-1\right)  \pi}{3};k\in\left\{  1,2,3\right\}  .
\]
Since $\arg\left(  a+1\right)  \in\left]  \frac{\pi}{2},\pi\right[  ,$ we get%
\[
\alpha_{1}\left(  0^{+}\right)  \in\left]  \frac{-\pi}{12},\frac{\pi}%
{6}\right[  ;\alpha_{2}\left(  0^{+}\right)  \in\left]  \frac{7\pi}{12}%
,\frac{5\pi}{6}\right[  ;\alpha_{3}\left(  0^{+}\right)  \in\left]  \frac
{5\pi}{4},\frac{3\pi}{2}\right[  .
\]
In other words, $\gamma_{2}\left(  t\right)  $ and $\gamma_{3}\left(
t\right)  $ rise respectively in the upper and lower half planes, while the
argument $\alpha_{1}\left(  0^{+}\right)  $ of $\gamma_{1}\left(
0^{+}\right)  +1$ increases from $\frac{-\pi}{12}$ to $\frac{\pi}{6}$ as $v$
increases from $0$ to $+\infty.$ The same reasoning shows that from $z=1$
emerges a critical trajectory $\delta_{1}\left(  t\right)  $ such that, the
argument $\beta_{1}\left(  0^{+}\right)  $ of $\delta_{1}\left(  0^{+}\right)
+1$ decreases from $\frac{7\pi}{6}$ to $\frac{11\pi}{12}$ as $v$ increases
from $0$ to $+\infty.$ It is obvious that $\delta_{1}\left(  t\right)  $ is
the unique eligible critical trajectory emerging from $z=1$ to be finite
(ending at $z=a$ ).

For $a=a_{1},\Re\int_{-1}^{1}e^{i\theta}\sqrt{p_{a_{1}}\left(  t\right)
}dt=0,$ which gives $b\in\left]  -1,1\right[  $ such that%
\[
\left(  \Re\int_{-1}^{b}e^{i\theta}\sqrt{p_{a_{1}}\left(  t\right)
}dt\right)  \left(  \Re\int_{b}^{1}e^{i\theta}\sqrt{p_{a_{1}}\left(  t\right)
}dt\right)  <0.
\]
With the same choice of the square roots as in above, it is obvious that
\[
\arg\sqrt{p_{a}\left(  r\right)  }<\arg\sqrt{p_{a}\left(  t\right)  }%
<\arg\sqrt{p_{a}\left(  s\right)  },\forall r\in\left]  -1,b\right[  ,\forall
s\in\left]  b,1\right[  ,
\]
and then%
\begin{equation}
\Re\int_{b}^{1}e^{i\theta}\sqrt{p_{a_{1}}\left(  t\right)  }dt<0.
\label{INEQ b}%
\end{equation}
By continuity, inequality (\ref{INEQ b}) holds for $a=u+iv,$ $v\in\left]
v_{1},v_{1}+\varepsilon\right[  ,$ for some $\varepsilon>0.$ Since%
\[
\Re\int_{-1}^{1}e^{i\theta}\sqrt{p_{a}\left(  t\right)  }dt>0,\forall\Im
a>v_{1},
\]
a quick study of the function $x\longmapsto\Re\int_{x}^{1}e^{i\theta}%
\sqrt{p_{a}\left(  t\right)  }dt$ defined in $\left[  -1,1\right]  ,$ shows
that it vanishes in some unique point $x_{a}\in$ $\left]  -1,1\right[  .$ It
follows that the quadratic differential $\varpi_{a,\theta}$ has a critical
trajectory (more precisely, $\delta_{1}\left(  t\right)  $ ) that intersects
the real line in $x_{a}.$ With remark.\ref{contra}, $\delta_{1}\left(
t\right)  $ should diverge to $\infty$ in the lower half plane. The same
remark proves that $\delta_{1}\left(  t\right)  $ is the only critical
trajectory emerging from $z=1$ that can go through $z=a.$ If for some $v\geq
v_{1}+\varepsilon,$ $\delta_{1}\left(  t\right)  $ stays in the upper half
plane, then, by continuity of the trajectories in the Haussd\"{o}rf metric, it
should meet $\gamma_{1}\left(  t\right)  $ and $\varpi_{a,\theta}$ has a short
trajectory connecting $\pm1$ for an $a\notin\Sigma_{\theta}$ which is
impossible. Thus, for any $v>v_{1},$ $\varpi_{a,\theta}$ has no short
trajectory connecting $z=1$ and $z=a.$

From lemma.\ref{teich lemma}, acknowledging the behavior of critical
trajectories of $\varpi_{a,\theta}$ emerging from two zeros, gives a full idea
about the critical graph $\Gamma_{a,\theta}.$ For $i\in\left\{
1,...,n_{\theta}\right\}  ,$ we consider the equivalence relation
$\mathcal{R}_{i}$ in $\Omega_{i}$ :
\[
a\mathcal{R}_{i}a^{\prime}\text{if and only if, critical graphs}%
\Gamma_{a,\theta}\text{ and }\Gamma_{a^{\prime},\theta}\text{ have the same
structure.}%
\]
In other words, for any critical trajectory of $\varpi_{a,\theta}$ emerging
from $z=-1,$ (resp. $z=+1$), there exists a correspondent critical trajectory
of $\varpi_{a^{\prime},\theta}$ that emerges from $z=-1$ (resp. $z=+1$), and
diverges to $\infty$ following the same critical direction. Let be
$\Lambda_{1},...,\Lambda_{m},m\geq1,$ the equivalence classes of
$\mathcal{R}_{i},$ and $b_{n}$ a sequence of points in $\Lambda_{j},1\leq
j\leq m,$ converging to some $b\in\Omega_{i}.$ It is obvious, with an adequate
choice of the square roots, that the sequence of functions
\[
f_{n}\left(  z\right)  =e^{i\theta}\sqrt{\left(  b_{n}-z\right)  \left(
1-z^{2}\right)  }%
\]
converges uniformly on compact sub-sets of $\Omega_{i}$ to the function
\[
f\left(  z\right)  =e^{i\theta}\sqrt{\left(  b-z\right)  \left(
1-z^{2}\right)  }.
\]
Therefore, if, for example, $\gamma_{-1,b_{n}}$ is a critical trajectory of
$\varpi_{b_{n},\theta}$ emerging from $z=-1,$ and diverging to $\infty$ in a
direction $D_{\theta,k},$ then, it should converge (in the Haussd\"{o}rf
metric) to a correspondent critical trajectory $\gamma_{-1,b}$ of
$\varpi_{b,\theta}$ diverging to $\infty$ in the same direction. Indeed,
consider a sequence $\left(  y_{n}\right)  \subset\gamma_{-1,b_{n}},$ and
converging to some $y\in%
\mathbb{C}
.$ It is obvious then that :%
\[
\Re\left(  e^{i\theta}\int_{-1}^{y_{n}}\sqrt{\left(  b_{n}-t\right)  \left(
1-t^{2}\right)  }dt\right)  =\Re\left(  e^{i\theta}\int_{-1}^{y}\sqrt{\left(
b-t\right)  \left(  1-t^{2}\right)  }dt\right)  =0,
\]
and $y\in\gamma_{-1,b}.$ Moreover, if $\gamma_{-1,b}$ diverges to $\infty,$
then it cannot follow a direction different to $D_{\theta,k},$ unless he meets
another critical trajectory, and we get a short one, which cannot hold since
$b\in\Omega_{i}.$ Hence, $b_{n}\mathcal{R}_{i}b,$ and, $\Lambda_{j}$ is a
closed sub-set of $\Omega_{i}.$ Since $\Omega_{i}$ is a non-empty connected
set, it cannot be a disjoint finite union of non-empty closed sub-sets. Thus,
$m=1,$ and all critical graphs $\left(  \Gamma_{a,\theta}\right)  _{a\in
\Omega_{i}}$ have the same structure.
\end{proof}

\subsection{\bigskip The case $\theta=0$}

Recall that for the case $\theta=0,$ $\Xi_{0}=\Sigma_{\pm1,0}\cup\Sigma
_{\pm1,0},$ and $n_{0}=8.$ For the sake of simplicity, we will restrict our
investigation in this section in the case of $\Xi_{0}\cap%
\mathbb{C}
^{+},$ where $%
\mathbb{C}
^{+}=\left\{  a\in%
\mathbb{C}
:\Im a>0\right\}  .$

\begin{lemma}
\label{intersects}Let $a\in%
\mathbb{C}
^{+}.$ Then,

\begin{itemize}
\item no short trajectory of $\varpi_{a,0}$ connects $z=1$ and $z=-1;$

\item no critical trajectory of the quadratic differential $\varpi_{a,0}$
emerging from $z=\pm1$ intersects $%
\mathbb{R}
\setminus\left\{  \pm1\right\}  ;$

\item there exist at most one critical trajectory of $\varpi_{a,0}$ emerging
from $z=a,$ intersecting the real line; if it exists, the intersection is in a
unique point;

\item for $\Re a\leq1,$ there is no critical trajectory of $\varpi_{a,0}$
emerging from $z=1$ and intersecting $i%
\mathbb{R}
^{+}.$
\end{itemize}
\end{lemma}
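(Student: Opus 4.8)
The plan is to reduce all four assertions to a single principle: by Remark \ref{contra}, for $\Im a>0$ the integral $\int_{\alpha}^{\beta}\sqrt{p_a(t)}\,dt$ over any real segment has argument in $\left]0,\pi/2\right[$, hence strictly positive real part, so the defining condition \eqref{cond necess} of a critical or short trajectory, $\Re\int\sqrt{p_a}=0$, can never be realized along a piece of the real axis. In each item I would assume the forbidden configuration, close up the relevant trajectory arc(s) with a segment of the real (or, in the last item, the imaginary) axis into a Jordan curve $\mathcal{C}$, and compare the contribution of the trajectory arcs---on which $\Re\int\sqrt{p_a}$ is constant and equals its value $0$ at the zero---with the contribution of the axis segment, which is nonzero. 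Provided $\mathcal{C}$ encloses no branch point of $\sqrt{p_a}$, Cauchy's theorem gives $\oint_{\mathcal{C}}\sqrt{p_a}\,dz=0$, forcing $\Re\int_{\text{segment}}\sqrt{p_a}=0$ and contradicting Remark \ref{contra}.

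For the first item, a short trajectory $\gamma$ joining $z=-1$ and $z=1$ lies, by Lemma \ref{lemme1}, in the half-plane $\mathbb{C}^{+}$ containing $a$; as in the opening of the proof of Theorem \ref{main thm}, $a$ must lie outside the bounded domain cut out by $\gamma\cup[-1,1]$ (otherwise every critical trajectory issuing from $a$ would meet $[-1,1]$, the contradiction of Lemma \ref{lemme1}). Hence $\gamma$ is homotopic to $[-1,1]$ in $\mathbb{C}\setminus\{a\}$ and $\Re\int_{[-1,1]}\sqrt{p_a}=\Re\int_{\gamma}\sqrt{p_a}=0$, impossible by Remark \ref{contra}. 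For the second item, a critical trajectory $\delta$ emanating from $z=1$ (the argument at $z=-1$ is identical) and first meeting $\mathbb{R}$ at $c$ bounds, together with $[c,1]$, a Jordan domain $G$; once I check that $G$ contains neither $a$ nor $-1$ as an interior branch point, $\oint_{\partial G}\sqrt{p_a}=0$ yields $\Re\int_{[c,1]}\sqrt{p_a}=0$, again contradicting Remark \ref{contra}. The third item is the same mechanism run on the zero $z=a$: if two trajectories from $a$ meet $\mathbb{R}$, or one meets it twice, I would splice the two (sub)arcs with the real segment between the crossing points into a closed curve on which the trajectory parts contribute zero real part while the segment contributes a nonzero one.

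The last item is the vertical-axis counterpart of the second: a trajectory $\delta$ from $z=1$ hitting $i\mathbb{R}^{+}$ at $c=iy_0$ would be closed up by the segments $[iy_0,0]$ and $[0,1]$. Here the hypothesis $\Re a\le 1$ is exactly what I need in order to control $\arg\sqrt{p_a}$ along the vertical segment $[0,iy_0]$ and obtain a vertical analogue of Remark \ref{contra}, guaranteeing that $\Re\bigl(\int_{[iy_0,0]}\sqrt{p_a}\,dt+\int_{[0,1]}\sqrt{p_a}\,dt\bigr)\neq 0$; the trajectory part again contributes $0$, so no such $\delta$ can exist.

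The main obstacle throughout is the bookkeeping of which branch points ($\pm1$ and $a$) are enclosed by the closing-up curve $\mathcal{C}$: the clean identity $\oint_{\mathcal{C}}\sqrt{p_a}=0$ is valid only when no pair of branch points is trapped, since a loop around two branch points returns a nonzero (elliptic-period) value rather than $0$. Ruling out such enclosures is where I expect the real work to be, and I would do it with the trapping argument of Lemma \ref{lemme1} (an enclosed zero forces all of its emanating trajectories to cross the bounding axis segment, contradicting Remark \ref{contra}) together with Teichm\"{u}ller's Lemma \ref{teich lemma} to bound the number and interior angles of critical points on such a closed curve. Establishing the vertical-segment estimate for the fourth item, under the hypothesis $\Re a\le 1$, is the other delicate computational point.
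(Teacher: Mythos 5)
Your plan is essentially the paper's own proof: the paper likewise establishes each item by transferring the vanishing of $\Re\int\sqrt{p_a}$ along a critical trajectory arc to an integral over a real segment (equation (\ref{integ})) whose integrand has constant sign -- the content of Remark \ref{contra} -- uses Teichm\"{u}ller's Lemma \ref{teich lemma} to control the double-crossing case in the third item, and proves the fourth item exactly by your vertical closing-up $\int_1^{bi}=-\int_0^1+\int_0^{bi}$ with argument estimates that exploit $\Re a\leq1$. If anything, you are more explicit than the paper about the branch-point-enclosure caveat in the Cauchy-theorem step (the paper writes the path-independence identity without comment), so your proposal is a slightly more careful rendering of the same argument.
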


\begin{proof}
Let be $\gamma$ a critical trajectory of $\varpi_{a,0}$ emerging for example
from $z=1$ (the case $z=-1$ is similar ) and diverging to $\infty.$ Suppose
for example that $x\in\gamma\cap\left]  1,+\infty\right[  .$ Then
\begin{equation}
\Re\left(  \int_{1}^{x}\sqrt{p_{a}\left(  t\right)  }dt\right)  =\int_{1}%
^{x}\Re\left(  \sqrt{t-a}\right)  \sqrt{t^{2}-1}dt=0. \label{integ}%
\end{equation}
As in the proof of lemma \ref{sets sigma}, if $\Re\left(  \sqrt{t-a}\right)
=0$ for some $t_{0}>1,$ then $\left(  t_{0}-a\right)  <0,$ which cannot hold.
By an adequate choice of the square roots, and continuity of the function
$t\longmapsto\Re\left(  \sqrt{t-a}\right)  ,$ each of the quantities
$\Re\left(  \sqrt{t-a}\right)  $ and $\sqrt{t^{2}-1}$ preserve the same sign
for every $t>1,$ and the integral $\int_{1}^{x}\Re\left(  \sqrt{t-a}\right)
\sqrt{t^{2}-1}dt$ cannot vanish, which contradicts (\ref{eq integ}). If a
critical trajectory emerges from $z=a,$ and intersects $%
\mathbb{R}
\setminus\left\{  \pm1\right\}  $ in two different points, then, obviously
from lemma \ref{teich lemma}, they should be both on a same side with $\pm1.$
Again we get a contradiction by considering the integral in (\ref{integ})
between these two points.

For the last point, let us begin by the case $\Re a\leq0,$ and suppose that
such a critical trajectory $\gamma$ of $\varpi_{a,0}$ does exist. Let be
$bi,b>0$ the first intersection of $\gamma$ with $i%
\mathbb{R}
^{+}.$ Taking the paths of integration in $\left[  0,1\right]  $ and $\left[
0,bi\right]  $ respectively in the first and in the second integral, the
arguments in $\left[  0,2\pi\right[  $ and the square roots in $\left[
0,\pi\right[  ,$ we get%
\begin{align}
0  &  =\Re\left(  \int_{1}^{bi}\sqrt{p_{a}\left(  z\right)  }dz\right)
=-\Re\left(  \int_{0}^{1}\sqrt{p_{a}\left(  z\right)  }dz\right)  +\Re\left(
\int_{0}^{bi}\sqrt{p_{a}\left(  z\right)  }dz\right) \nonumber\\
&  =-\Re\left(  \int_{0}^{1}\sqrt{\left(  1-t^{2}\right)  \left(  a-t\right)
}dt\right)  -b\Im\left(  \int_{0}^{1}\sqrt{\left(  1+b^{2}t^{2}\right)
\left(  a-ibt\right)  }dt\right)  . \label{re de a negative}%
\end{align}
We have for any $t\in\left[  0,1\right]  ,$%
\begin{align*}
\arg\left(  \sqrt{\left(  1-t^{2}\right)  \left(  a-t\right)  }\right)   &
\in\left[  \frac{\pi}{4},\frac{\pi}{2}\right]  ,\\
\arg\left(  \sqrt{\left(  1+b^{2}t^{2}\right)  \left(  a-ibt\right)  }\right)
&  \in\left[  \frac{\pi}{4},\frac{3\pi}{4}\right]  ,
\end{align*}
and then
\[
\Re\left(  \int_{0}^{1}\sqrt{\left(  1-t^{2}\right)  \left(  a-t\right)
}dt\right)  >0,\Im\left(  \int_{0}^{1}\sqrt{\left(  1+b^{2}t^{2}\right)
\left(  a-ibt\right)  }dt\right)  >0
\]
which violates (\ref{re de a negative}).

The case $\Re a\in\left]  0,1\right]  $ can be treated by the same idea, with
the choice of the arguments in $\left]  -\pi,\pi\right[  $ and the square
roots in $\left]  -\frac{\pi}{2},\frac{\pi}{2}\right[  .$\bigskip
\end{proof}

In the following proposition we give a detailed description of the critical
graph $\Gamma_{a,0}$ of $\varpi_{a,0}$ by summarizing the preceding results.
The general case of $\theta$ can be treated in the same vein but more
fastidious discussions and calculations.

\begin{proposition}
\label{prop princ}For any $a\in%
\mathbb{C}
^{+},$ the quadratic differential $\varpi_{a,0}$ has five half-plane domains,
and one or two strip domains, as follows :

\begin{enumerate}
\item If $a$ $\in\Omega_{1},$ a strip domain, bordered by two pairs of
critical trajectories emerging from $z=-1$ and $z=a,$ and diverging to
$\infty$ following the critical directions $D_{0,1}$ and $D_{0,3};$ it
contains $\left]  \Delta,-1\right[  ,\Delta<-1.$ A second strip domain,
bordered by two pairs of critical trajectories emerging from $z=-1$ and
$z=+1,$ and diverging to $\infty$ following the critical directions $D_{0,0}$
and $D_{0,3};$ it contains $\left]  -1,1\right[  .$

\item If $a$ $\in\Sigma_{-1,0}^{l},$ a unique strip domain, bordered by the
short trajectory, two pairs of critical trajectories emerging from $z=-1$ and
$z=a,$ and diverging to $\infty$ following the critical directions $D_{0,0}$
and $D_{0,3};$ it contains $\left]  -1,1\right[  .$

\item If $a$ $\in\Omega_{2},$ a strip domain in the upper half plane, bordered
by two pairs of critical trajectories emerging from $z=-1$ and $z=a,$ and
diverging to $\infty$ following the critical directions $D_{0,0}$ and
$D_{0,2}.$ A second strip domain, bordered by two pairs of critical
trajectories emerging from $z=-1$ and $z=+1,$ and diverging to $\infty$
following the critical directions $D_{0,0}$ and $D_{0,3};$ it contains
$\left]  -1,1\right[  .$

\item If $a$ $\in\Sigma_{-1,0}^{r},$ a unique strip domain, bordered by two
pairs of critical trajectories emerging from $z=-1$ and $z=+1,$ and diverging
to $\infty$ following the critical directions $D_{0,0}$ and $D_{0,3};$ it
contains $\left]  -1,1\right[  .$

\item If $a$ $\in\Omega_{3},$ a strip domain, bordered by two pairs of
critical trajectories emerging from $z=-1$ and $z=a,$ and diverging to
$\infty$ following the critical directions $D_{0,1}$ and $D_{0,3};$ it
contains $\left]  -1,\Delta\right[  ,$ $\Delta\in\left]  -1,1\right[  .$ A
second strip domain, bordered by two pairs of critical trajectories emerging
from $z=+1$ and $z=a,$ and diverging to $\infty$ following the critical
directions $D_{0,0}$ and $D_{0,3};$ it contains $\left]  \Delta,1.\right[  $

\item If $a$ $\in\Sigma_{1,0},$ a unique strip domain, bordered by the short
trajectory, two pairs of critical trajectories emerging from $z=-1$ and
$z=+1,$ and diverging to $\infty$ following the critical directions $D_{0,1}$
and $D_{0,3};$ it contains $\left]  -1,1\right[  .$

\item if $a$ $\in\Omega_{4},$ a strip domain, bordered by two pairs of
critical trajectories emerging from $z=-1$ and $z=+1,$ and diverging to
$\infty$ following the critical directions $D_{0,1}$ and $D_{0,3};$ it
contains $\left]  -1,1\right[  .$ A second strip domain; bordered by two pairs
of critical trajectories emerging from $z=+1$ and $z=a,$ and diverging to
$\infty$ following the critical directions $D_{0,1}$ and $D_{0,4};$ it
contains $\left]  1,\Delta\right[  ,$ $\Delta>1.$
\end{enumerate}
\end{proposition}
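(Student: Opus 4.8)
The plan is to obtain the coarse count directly from Theorem \ref{main thm}, which already asserts that throughout $\mathbb{C}^{+}$ the graph $\Gamma_{a,0}$ produces exactly five half-plane domains, together with two strip domains when $a$ lies in one of the open regions $\Omega_{1},\dots,\Omega_{4}$ and a single strip domain when $a$ lies on one of the separating arcs $\Sigma_{-1,0}^{l},\Sigma_{-1,0}^{r},\Sigma_{1,0}$. What remains is the fine structure: for each region one must name the two critical directions $D_{0,k}$ bounding each strip and identify the real subinterval it contains. Since Theorem \ref{main thm} also guarantees that the combinatorial type of $\Gamma_{a,0}$ is constant on each $\Omega_{i}$, I would fix one convenient representative $a$ in each region and argue there, extending the conclusion to the whole region by that constancy.

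First I would determine, at each of the three simple zeros $-1,+1,a$, the arguments of the three emanating critical trajectories. This is the local computation already performed for $\alpha_{k}(0^{+})$ in the proof of Theorem \ref{main thm}: the rays leave under equal angles $2\pi/3$, pinned modulo $2\pi/3$ by the local data $\arg(z_{0}-z_{\mathrm{other}})$ and the positivity $-e^{2i\theta}p_{a}(z)\,dz^{2}>0$. Lemma \ref{intersects} then performs the decisive pruning for $\theta=0$: no critical trajectory issued from $\pm1$ may meet $\mathbb{R}\setminus\{\pm1\}$, at most one trajectory from $a$ may cross the real line and then in a single point, and for $\Re a\le 1$ no trajectory from $z=1$ meets $i\mathbb{R}^{+}$. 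Combined with the requirement that every unbounded separatrix escape along one of the five directions $D_{0,k}$ and with the Teichm\"{u}ller identity \eqref{teich equality} applied to the separatrix skeleton, these constraints leave a single admissible global completion in each region, which I would exhibit case by case.

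The transitions between regions I would handle by Hausdorff-continuity of $\Gamma_{a,0}$ in $a$ together with Lemma \ref{teich lemma}. On $\Sigma_{1,0}$ the necessary condition \eqref{cond necess} holds along $[-1,1]$, so a short trajectory connecting $\pm1$ appears; applying \eqref{teich equality} to the closed curve formed by this short trajectory and $[-1,1]$ forces the two strips present on either side to coalesce into the single strip of case~6, and the remark following Lemma \ref{teich lemma} guarantees that no second short trajectory interferes. The same mechanism on $\Sigma_{-1,0}^{l}$ and $\Sigma_{-1,0}^{r}$ produces the short trajectory joining $z=-1$ to $z=a$ of cases~2 and~4, the bounding directions shifting by one index as the separating vertical trajectory is replaced by the short one. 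Remark \ref{contra} is exactly what rules out the spurious completions in which two of the relevant real-line integrals would vanish simultaneously.

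Finally, for the containment claims — that a given strip contains $]-1,1[$, or $]\Delta,\pm1[$ for a specified $\Delta$ — I would use that a real point $x$ belongs to a strip precisely when $\Re\int_{x_{0}}^{x}e^{i\theta}\sqrt{p_{a}(t)}\,dt$ lies strictly between the two boundary values, locating the separating point $\Delta$ as the unique zero of $x\mapsto\Re\int_{x}^{1}e^{i\theta}\sqrt{p_{a}(t)}\,dt$ whose existence and uniqueness were established in the proof of Theorem \ref{main thm}. The main obstacle will be the bookkeeping distinguishing $\Sigma_{-1,0}^{l}$ from $\Sigma_{-1,0}^{r}$: crossing the left branch leaves a strip through $z=a$ intact (case~2) whereas crossing the right branch does not (case~4), and deciding which directions among $D_{0,0},\dots,D_{0,3}$ the trajectory from $a$ selects as $a$ travels $\Omega_{1}\to\Sigma_{-1,0}^{l}\to\Omega_{2}\to\Sigma_{-1,0}^{r}\to\Omega_{3}$ is the delicate step, requiring the angle estimates above at each crossing.
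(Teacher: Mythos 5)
Your overall strategy coincides with the paper's: coarse count and region-constancy taken from Theorem \ref{main thm}, local angle computations at the three zeros, pruning by Lemma \ref{intersects}, Teichm\"{u}ller's identity (\ref{teich equality}) to exclude all but one global completion, and Hausdorff continuity plus a convenient representative in each region (the paper takes $\Im a\longrightarrow0^{+}$; e.g.\ for $a<-1$ the trajectories from $z=a$ follow $D_{0,1},D_{0,2},D_{0,3}$, and continuity propagates this into the adjacent $\Omega_{i}$). Up to that point your proposal is sound and is essentially the paper's argument.

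However, your mechanism for the boundary arcs contains a genuine error. You assert that for $a\in\Sigma_{1,0}$ ``a short trajectory connecting $\pm1$ appears'' and you derive the single strip of case~6 by applying (\ref{teich equality}) to the closed curve formed by this short trajectory and $\left[-1,1\right]$. This contradicts the first bullet of Lemma \ref{intersects}, which you yourself invoke: for $a\in\mathbb{C}^{+}$ no short trajectory of $\varpi_{a,0}$ ever connects $z=1$ and $z=-1$. The locus $\Sigma_{1,0}$ is, by definition, the vanishing set of $\Re\int_{\left[1,a\right]}\sqrt{p_{a}(t)}\,dt$, i.e.\ the necessary condition for a short trajectory joining $z=+1$ to $z=a$; a trajectory joining $\pm1$ would instead require $a\in\Sigma_{0}$. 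Hence the closed curve on which you want to apply Teichm\"{u}ller's lemma does not exist, and the ``coalescence of two strips'' argument collapses. The same uniform mechanism also misfires on $\Sigma_{-1,0}$: the proposition records a short trajectory bordering the strip only in case~2, not in case~4, so the two branches $\Sigma_{-1,0}^{l}$ and $\Sigma_{-1,0}^{r}$ behave asymmetrically --- and distinguishing them is precisely the step you defer as ``delicate,'' which your symmetric transition argument cannot resolve. The paper instead obtains the one-strip configurations by identifying which separatrix ($\gamma_{1}$ or $\gamma_{2}$ emanating from $z=-1$, or $\beta_{1}$ from $z=+1$) degenerates into a short trajectory ending at $z=a$, and then re-runs the Teichm\"{u}ller/Lemma~\ref{intersects} exclusion to place the remaining rays; your write-up needs that replacement for cases 2, 4 and 6 to go through.
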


\begin{proof}
Let be $\gamma_{1}\left(  t\right)  ,\gamma_{2}\left(  t\right)  ,$ and
$\gamma_{3}\left(  t\right)  ,$ $t\geq0,$ the three critical trajectories of
$\varpi_{a,0}$ emerging from $z=-1,$ with $\gamma_{1}\left(  0\right)
=\gamma_{2}\left(  0\right)  =\gamma_{3}\left(  0\right)  =-1.$ Using
notations of theorem.\ref{main thm} proof, we claim with
lemma.\ref{intersects} that there exists $\alpha_{k}\left(  t\right)
\in\left]  0,\frac{2\pi}{3}\right[  $ such that :
\[
\arg\left(  \gamma_{k}\left(  t\right)  +1\right)  =\alpha_{k}\left(
t\right)  +\frac{2\left(  k-1\right)  \pi}{3},t\longrightarrow0^{+};k=1,2,3.
\]
Since $\arg\left(  a+1\right)  \in\left]  0,\pi\right[  ,$ we get%
\[
\alpha_{1}\left(  0^{+}\right)  =\frac{-\arg\left(  a+1\right)  +\pi}{3}%
\in\left]  0,\frac{\pi}{3}\right[  ,
\]
which shows with lemma \ref{intersects} that $\gamma_{1}$ and $\gamma_{2}$
belong the upper half plane, while $\gamma_{3}$ belongs the lower half plane,
and then, it diverges to $\infty.$ The same reasoning shows that from $z=1$
emerges a trajectory, say $\beta_{1},$ staying in the upper half plane; while
the two remaining trajectories, taken in the counter clockwise orientation,
say $\beta_{2},\beta_{3},$ they belong to the lower half plane and diverge to
$\infty.$ We deduce that $\gamma_{3}$ diverges to $\infty$ following the
critical direction $D_{0,2}$ or $D_{0,3}.$

Let us assume first that $\gamma_{2}$ diverges to $\infty.$ In the large,
$\gamma_{2}$ forms with $\gamma_{3}$ an angle equal to $\frac{2k\pi}{5}$ for a
certain $k=0,...,4.$ Consider the curve formed by $\gamma_{2}$ and $\gamma
_{3}$ as the border of domain $\Omega$ encircling $\left]  -\infty,-1\right[
.$ Applying lemma \ref{teich lemma}, equality (\ref{teich equality}) becomes
\begin{equation}
1+k=2+\sum m_{i}. \label{conseq teich}%
\end{equation}
Lemma \ref{intersects} asserts that the only possible critical point in
$\Omega$ is $z=a,$ in this case, $k=2;$ otherwise, $k=1.$

\begin{enumerate}
\item If $k=2$ and $\gamma_{3}$ diverges to $\infty$ following $D_{0,2},$ then
$\gamma_{1}$ and $\gamma_{2}$ should diverge to $\infty$ following the same
critical direction $D_{0,0},$ which is impossible by lemma \ref{teich lemma}.
Then, $\gamma_{3}$ and $\gamma_{2}$ diverge to $\infty$ respectively following
$D_{0,3}$ and $D_{0,1},\gamma_{1}$ and $\beta_{1}$ diverge towards to $\infty$
following $D_{0,0},$ while $\beta_{2}$ and $\beta_{3}$ diverge to $\infty$
respectively following $D_{0,3}$ and $D_{0,4}.$ Critical trajectories that
emerge from $z=a,$ say, $\delta_{1},\delta_{2},$ and $\delta_{3}$ diverge to
$\infty$ respectively following $D_{0,1},D_{0,2},$ and $D_{0,3}.$ The last one
intersects the real line in some $\Delta<-1.$ In this case, $\varpi_{a,0}$ has
two strip domains, the first one contains $\left]  \Delta,-1\right[  ,$ and it
is bordered by $\gamma_{2},\gamma_{3},\delta_{1}\ $and $\delta_{2}.$ The
second one contains $\left]  -1,1\right[  $, and it is bordered by $\gamma
_{1},\gamma_{3},\beta_{1}$ and $\beta_{2};$ see Fig.\ref{FIG3}.

\item If $k=1,$ then $\Omega$ is a half plane domain of $\varpi_{a,0}$
containing $\left]  -\infty,-1\right[  .$ Suppose that $\gamma_{3}$ diverges
to $\infty$ following the critical direction $D_{0,2}.$ Lemma
\ref{teich lemma} imposes on $\gamma_{2}$ to follow the critical direction
$D_{0,1},$ which is impossible; indeed :

If $\gamma_{1}$ is not a short trajectory, then, again we get equality
(\ref{conseq teich}) for the domain bordered by $\gamma_{1}$ and $\gamma_{2}$
in the upper half plane. It implies that $\gamma_{1}$ diverges to $\infty$
following $D_{0,0}.$ By the other hand, $\beta_{1}$ cannot be a short
trajectory meeting $z=a,$ because if the two critical trajectories emerging
from $z=a$ diverge to $\infty$ in the upper half plane, then their only
possible critical direction is $D_{0,0},$ which violates lemma
\ref{teich lemma}; moreover, lemma \ref{intersects} guarantees that none of
these critical trajectories can intersect the real line. Thus, $\beta_{1}%
$diverges to $\infty$ following $D_{0,0}.$ Again, lemma \ref{teich lemma}
applied on the domain containing $z=0,$ bordered by $\beta_{1},\beta
_{1},\gamma_{1}$ and $\gamma_{3},$ with lemma \ref{intersects} require that
$z=a$ should belong to the domain containing $\left]  1,+\infty\right[  $,
bordered by $\beta_{1}$ and $\beta_{2};$ and we get the same contradiction.

If $\gamma_{1}$ is a short trajectory meeting $z=a,$ then, considering the two
remaining critical trajectories emerging from $z=a,$ we get the same previous contradiction.

We just proved that $\gamma_{3}$ diverges to $\infty$ following $D_{0,3},$
$\gamma_{2}$ diverges to $\infty$ following $D_{0,2},$ and then, $\beta_{2}$
and $\beta_{3}$ diverge to $\infty$ following respectively $D_{0,3}$ and
$D_{0,4}.$ There are three possibilities for $\gamma_{1},$ either it is a
short trajectory, or it diverges to $\infty$ following $D_{0,0}$ or $D_{0,1}.$

\begin{enumerate}
\item If $\gamma_{1}$ is a short trajectory (meeting $z=a$), then the two
remaining critical trajectories emerging from $z=a$ diverge to $\infty$
following respectively $D_{0,0}$ and $D_{0,1};$ $\beta_{1}$ diverge to
$\infty$ following $D_{0,0}.$ In this case, $\left]  -1,1\right[  $ is
included in a strip domain of $\varpi_{a,0}$ defined by $\beta_{1},\beta
_{2},\gamma_{3},\gamma_{1},$ and the critical trajectory emerging from $z=a$
and diverging to $\infty$ following $D_{0,0};$ see Fig.\ref{FIG4}.

\item If $\gamma_{1}$ diverges to $\infty$ following $D_{0,0},$ then from
lemma \ref{teich lemma}, $z=a$ belongs to the domain in the upper half plane
bordered by $\gamma_{1}$ and $\gamma_{2},$ critical trajectories emerging from
$z=a$ diverge to $\infty$ respectively following $D_{0,0},D_{0,1},$ and
$D_{0,2};$ $\beta_{1}$ diverges to $\infty$ following $D_{0,0}.$ In this case,
$\varpi_{a,0}$ has two strip domains, the first, bordered by $\gamma
_{1},\gamma_{2},$ and critical trajectories emerging from $z=a$ that follow
$D_{0,0}$ and $D_{0,2}.$ The second, bordered by $\beta_{1},\beta_{2}%
,\gamma_{3},$ and $\gamma_{1};$ it contains $\left]  -1,1\right[  ;$ see
Fig.\ref{FIG5}.

\item If $\gamma_{1}$ diverges to $\infty$ following $D_{0,1},$ then, either
$\beta_{1}$ is a short trajectory, or it diverges to $\infty$ following
$D_{0,0}$ or $D_{0,1};$

\begin{enumerate}
\item if $\beta_{1}$ is a short trajectory, the two remaining critical
trajectories emerging from $z=a$ diverge to $\infty$ respectively following
$D_{0,0},D_{0,1}.$ In this case, $\left]  -1,1\right[  $ is included in a
strip domain of $\varpi_{a,0}$ defined by $\gamma_{2},\gamma_{3},\beta
_{1},\beta_{2},$ and the critical trajectory emerging from $z=a$ and diverging
to $\infty$ following $D_{0,1};$ see Fig.\ref{FIG6}.

\item if $\beta_{1}$ diverges to $\infty$ following $D_{0,0},$ then two
critical trajectories emerging from $z=a,$ say $\delta_{1},\delta_{2},$
diverge to $\infty$ following respectively $D_{0,0},D_{0,1};$ the remaining
one, say $\delta_{3},$ diverges to $\infty$ following $D_{0,3},$ it intersects
the real line in some $\Delta\in$ $\left]  -1,1\right[  .$ In this case, the
sets $\left]  -1,\Delta\right[  $ and $\left]  \Delta,1\right[  $ are included
in two strip domains, respectively bordered by $\gamma_{1},\gamma_{3}%
,\delta_{2},\delta_{3},$ and $\beta_{1},\beta_{2},\delta_{1},\delta_{2};$ see
Fig.\ref{FIG7}.

\item if $\beta_{1}$ diverges to $\infty$ following $D_{0,1},$ then from lemma
\ref{teich lemma}, $z=a$ belongs the upper half plane bordered by $\beta_{1}$
and $\beta_{3},$ two critical trajectories emerge from $z=a,$ say $\delta
_{1},\delta_{2},$ diverge to $\infty$ following respectively $D_{0,0}$ and
$D_{0,1};$ the third one diverges to $\infty$ following $D_{0,4};$ it
intersects the real line in some $\Delta>1;$ it forms with $\beta_{1}%
,\beta_{3},$ and $\delta_{2},$ a strip domain containing $\left]
1,\Delta\right[  .$ The set $\left]  -1,1\right[  $ is included in a second
strip domain bordered by $\gamma_{1},\gamma_{3},\beta_{1},$ and $\beta_{2};$
see Fig.\ref{FIG8}.
\end{enumerate}
\end{enumerate}
\end{enumerate}

If $\gamma_{2}$ is a short trajectory meeting $z=a,$ and $\gamma_{3}$ diverges
to $\infty$ following $D_{0,2},$ then the two critical trajectories emerging
from $z=a$ diverge to $\infty$ following $D_{0,0}$ and $D_{0,1};$ one of them
diverges together with $\gamma_{1}$ following $D_{0,0,}$ which violates lemma
\ref{teich lemma}. Thus, $\gamma_{3}$ diverges to $\infty$ following
$D_{0,3},$ two critical trajectories emerging from $z=a$ diverge to $\infty$
following $D_{0,1}$ and $D_{0,2},$ $\gamma_{1}$ and $\beta_{1}$ diverge to
$\infty$ following $D_{0,0}.$ The set $\left]  -1,1\right[  $ is included in
the unique strip domain of $\varpi_{a,0}$ defined by $\beta_{1},\beta
_{2},\gamma_{3},$and $\gamma_{1}$; see Fig.\ref{FIG9}.

To finish the proof, the previous reasoning shows that the knowledge of the
behavior of critical trajectories emerging from $z=a\in\Omega_{i}\cap%
\mathbb{C}
^{+},i=1,...,8,$ suffices to determine the whole structure of the critical
graph $\Gamma_{a,0}$ of $\varpi_{a,0}$ in $\Omega_{i}\cap%
\mathbb{C}
^{+}.$ Therefore, it is enough to see the structure of $\Gamma_{a,0}$ when
$\Im a\longrightarrow0^{+}.$ For example, it is straightforward to check that
for $a<-1,$ critical trajectories emerging from $z=a$ diverge to $\infty$
following the critical directions $D_{0,1},D_{0,2}$ and $D_{0,3};$ by
continuity of the structure of trajectories, we deduce the structure of
critical graphs $\Gamma_{a,0}$ in the correspondent sub-set $\Omega_{i}\cap%
\mathbb{C}
^{+};$ see Fig.\ref{FIG9bis}.
\end{proof}

\begin{figure}[tbh]
\begin{minipage}[b]{0.3\linewidth}
		\centering\includegraphics[scale=0.3]{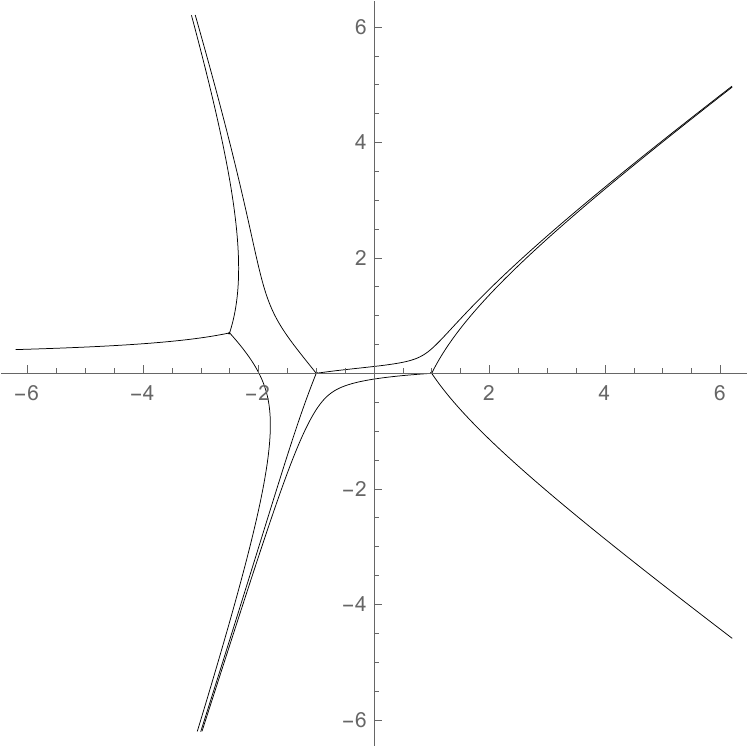}
		\caption{$\Gamma_{-2.5+0.7i,0}$}\label{FIG3}
\end{minipage}\hfill\begin{minipage}[b]{0.3\linewidth}
	\centering\includegraphics[scale=0.30]{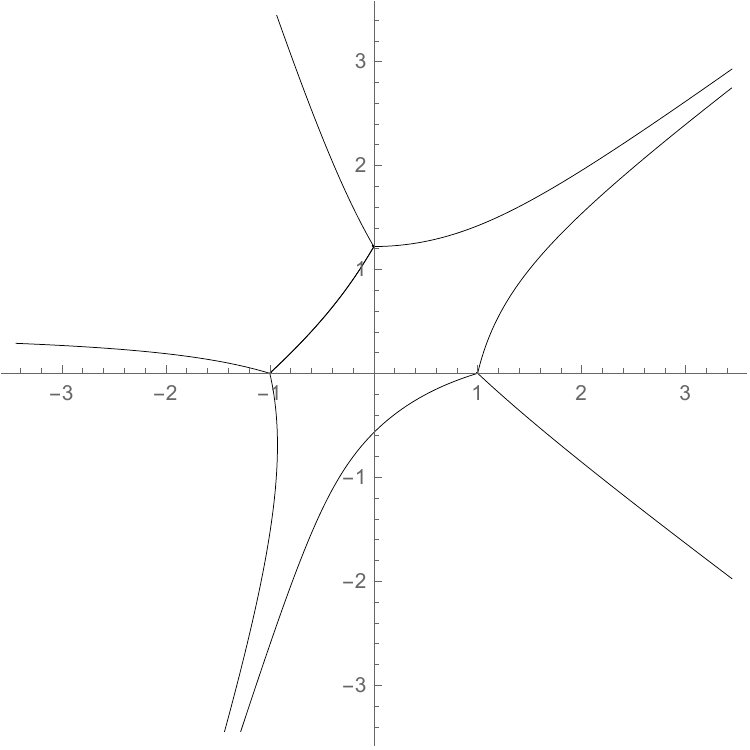}
	\caption{$\Gamma_{-1.22i,0}$}\label{FIG4}
\end{minipage}\hfill\begin{minipage}[b]{0.3\linewidth}
			\centering\includegraphics[scale=0.30]{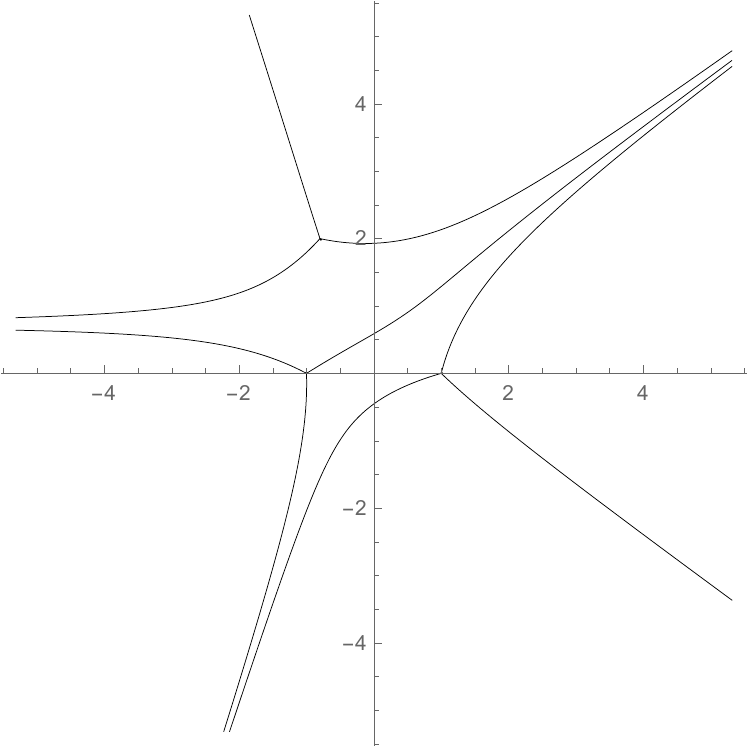}
			\caption{$\Gamma_{-0.8+2i,0}$}\label{FIG5}
\end{minipage}\hfill\begin{minipage}[b]{0.3\linewidth}
	\centering\includegraphics[scale=0.30]{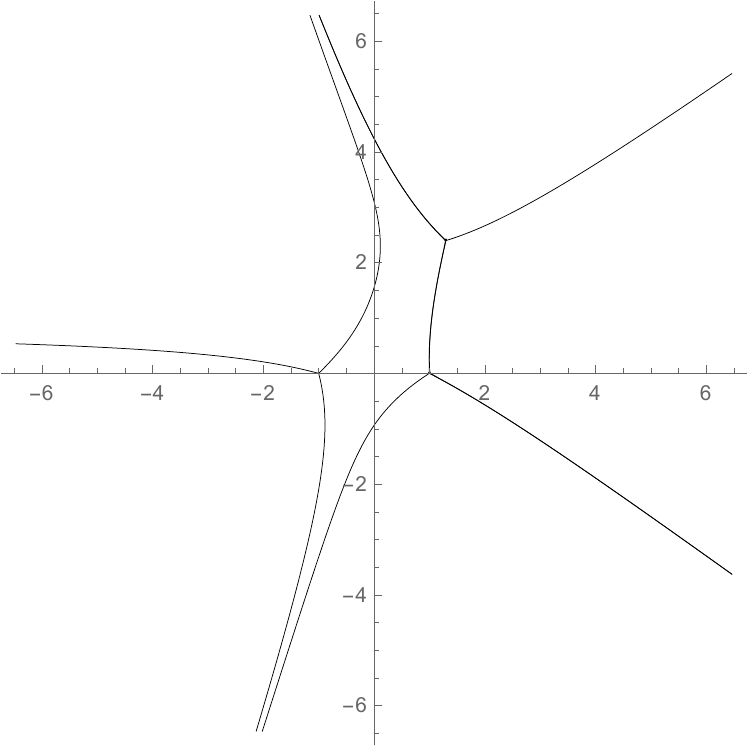}
	\caption{$\Gamma_{1.3+2.4i,0}$}\label{FIG6}
\end{minipage}\hfill\begin{minipage}[b]{0.3\linewidth}
	\centering\includegraphics[scale=0.30]{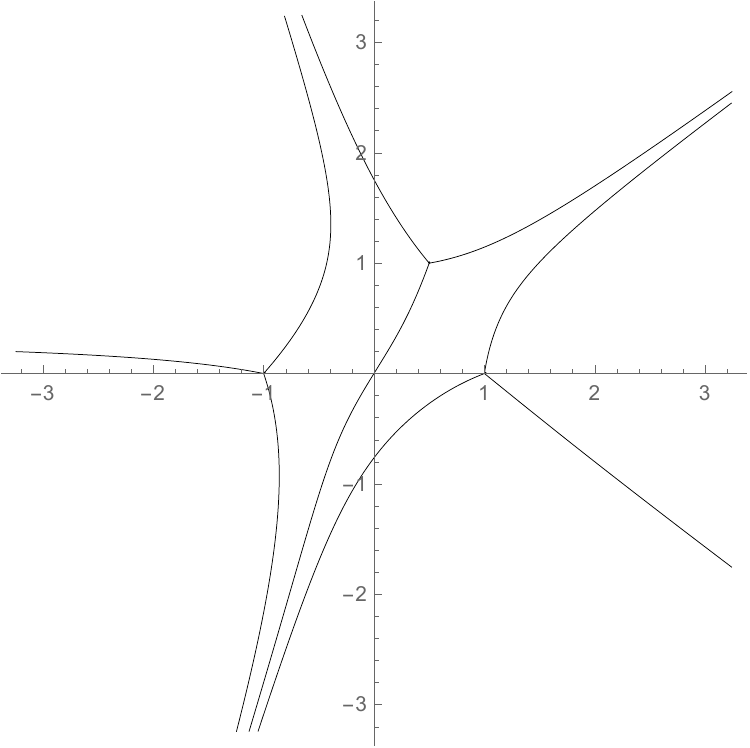}
	\caption{$\Gamma_{0.5+i,0}$}\label{FIG7}
\end{minipage}\hfill\begin{minipage}[b]{0.3\linewidth}
	\centering\includegraphics[scale=0.30]{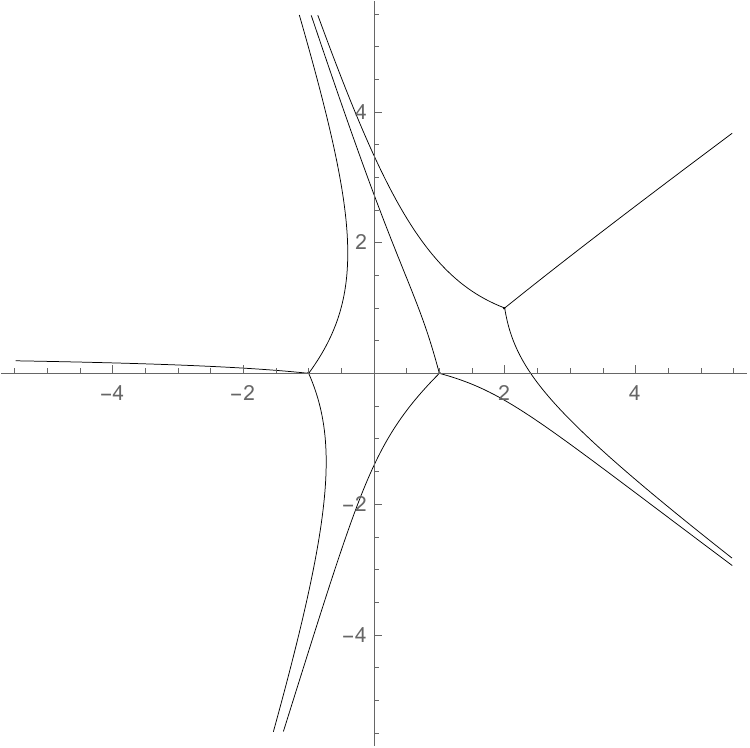}
	\caption{$\Gamma_{2+i,0}$}\label{FIG8}
\end{minipage}\hfill\begin{minipage}[b]{0.3\linewidth}
	\centering\includegraphics[scale=0.30]{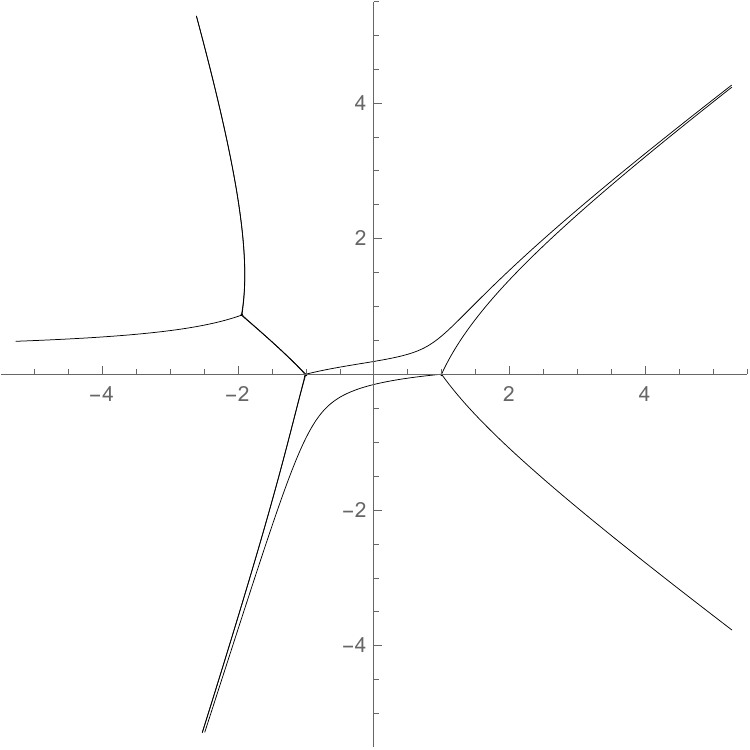}
	\caption{$\Gamma_{-1.95+0.87i,0}$}\label{FIG9}
\end{minipage}\hfill\begin{minipage}[b]{0.3\linewidth}
\centering\includegraphics[scale=0.30]{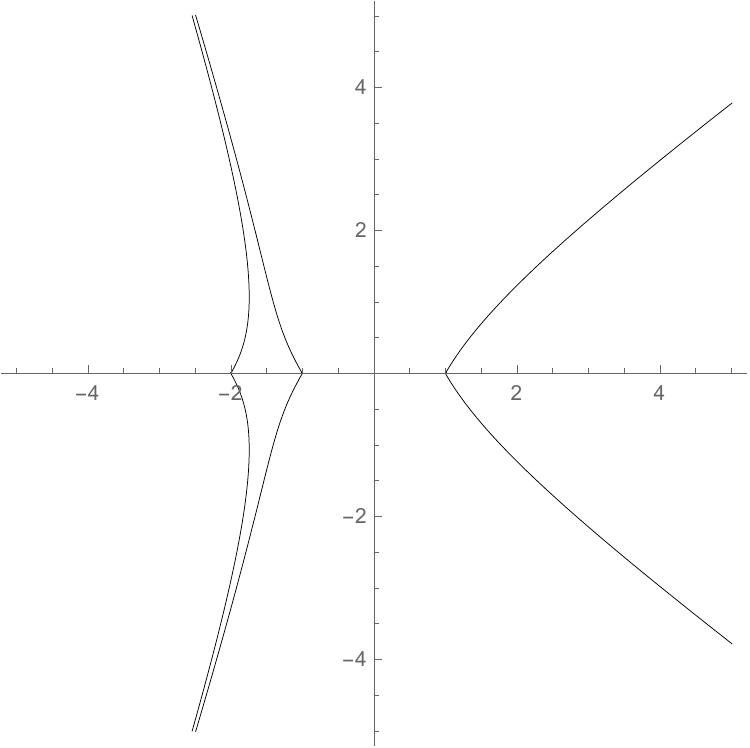}
\caption{$\Gamma_{-2,0}$}\label{FIG9bis}
\end{minipage}\hfill\end{figure}

\bigskip

\subsection{\bigskip The case $\theta=\pi/4$}

In the following proposition, we describe the critical graph of $\varpi
_{a,\pi/4}$ when it has at least one short trajectory. The symmetry of
$\Gamma_{a,\pi/4}$ and $\Gamma_{-\overline{a},\pi/4}$ with respect to the
imaginary axis let us restrict the investigation for $a\in\Sigma_{1,\pi/4}\cup
S_{\pi/4}.$ The proof is a same as in proposition.\ref{prop princ}.

\begin{proposition}
\label{main propo pi/4}Let $a\in\Sigma_{1,\pi/4}\cup S_{\pi/4}.$ Then, besides
five half-plane domains, the configuration domain of \bigskip$\varpi_{a,\pi
/4}$ is as follow :

\begin{enumerate}
\item For $a\in\Sigma_{1,\pi/4}^{r},$ exactly one strip domain; it includes
$\left]  -1,1\right[  $ and it is bounded by, the short trajectory of
$\varpi_{a,\pi/4},$ two critical trajectories diverging to $\infty$ following
the direction $D_{\pi/4,1},$ and two critical trajectories diverging to
$\infty$ following the direction $D_{\pi/4,3};$ see Fig.\ref{FIG 30};

\item \label{case2}for $a\in\Sigma_{1,\pi/4}^{l}\cap\left\{  z:\Re
z>0\right\}  ,$ exactly one strip domain, it includes $\left]  -1,\delta
\right[  $ for some $\delta\in\left]  -1,1\right]  ,$and it is bounded by two
critical trajectories diverging to $\infty$ following the direction
$D_{\pi/4,1},$ and two critical trajectories diverging to $\infty$ following
the direction $D_{\pi/4,3};$ see Fig.\ref{FIG 40};

\item $\varpi_{a,\pi/4}$ has a tree with summit $a_{\pi/4},$ and no strip
domain; see Fig.\ref{FIG 50};

\item for $a\in\Sigma_{1,\pi/4}^{l}\cap\left\{  z:\Re z<0\right\}  ,$ exactly
one strip domain that includes $\left]  \delta,1\right[  $ for some $\delta
\in\left[  -1,1\right[  ,$ and it is bounded by, the short trajectory of
$\varpi_{a,\pi/4},$ two critical trajectories diverging to $\infty$ following
the direction $D_{\pi/4,2},$ and two critical trajectories diverging to
$\infty$ following the direction $D_{\pi/4,4};$ see Fig.\ref{FIG 60}%
,Fig.\ref{FIG 70};

\item for $a=xi,x>\left\vert a_{\pi/4}\right\vert ,$ $\varpi_{a,\pi/4}$ has
exactly one short trajectory connecting $\pm1;$ it is included in $\left\{
z:-1<\Re z<1,0<\Im z<\left\vert a_{\pi/4}\right\vert \right\}  ;$ exactly one
strip domain that is bounded by the short trajectory of $\varpi_{a,\pi/4},$
two critical trajectories diverging to $\infty$ following the direction
$D_{\pi/4,0},$ and two critical trajectories diverging to $\infty$ following
the direction $D_{\pi/4,2};$ see Fig.\ref{FIG 80}.
\end{enumerate}

The short trajectory of $\varpi_{a,\pi/4}$ is a path between two half plane
domains in all cases excepted the case.\ref{case2}, where it joins the strip
domain and a half plane domain. \begin{figure}[tbh]
\begin{minipage}[b]{0.3\linewidth}
		\centering\includegraphics[scale=0.3]{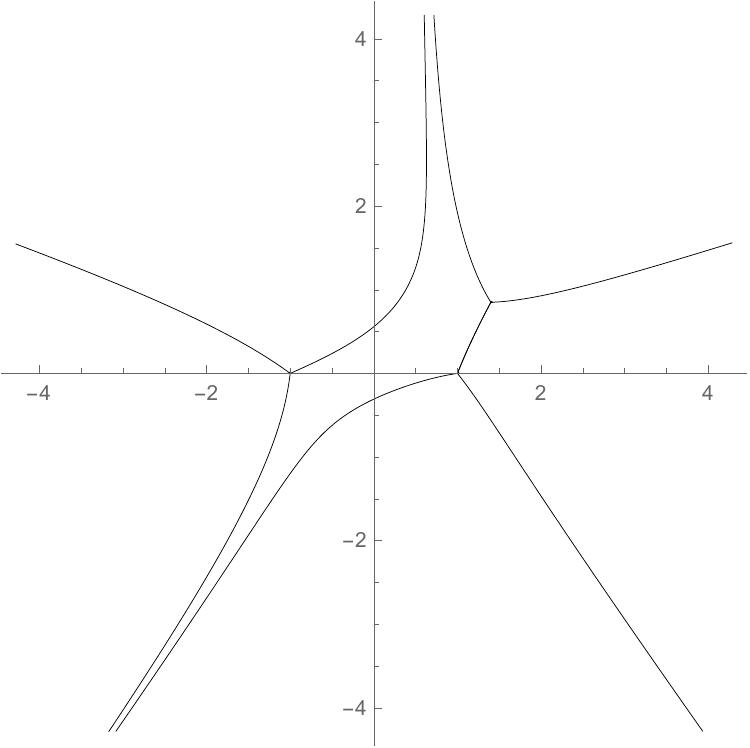}
		\caption{$\Gamma_{1.4+0.85i,\frac{\pi}{4}}$}\label{FIG 30}
	\end{minipage}\hfill\begin{minipage}[b]{0.3\linewidth}
		\centering\includegraphics[scale=0.30]{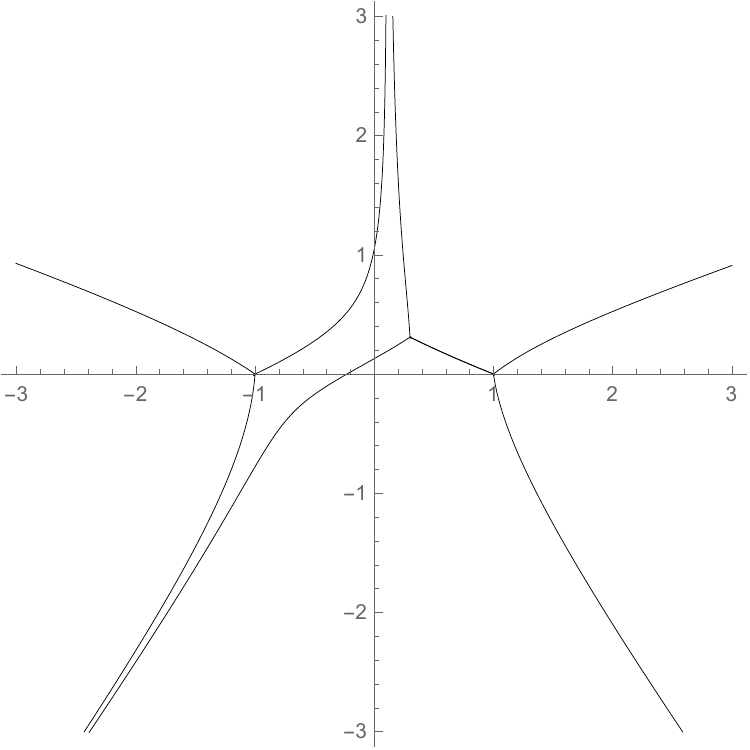}
		\caption{$\Gamma_{0.3 + 0.31i,\pi/4}$}\label{FIG 40}
	\end{minipage}\hfill\begin{minipage}[b]{0.3\linewidth}
		\centering\includegraphics[scale=0.30]{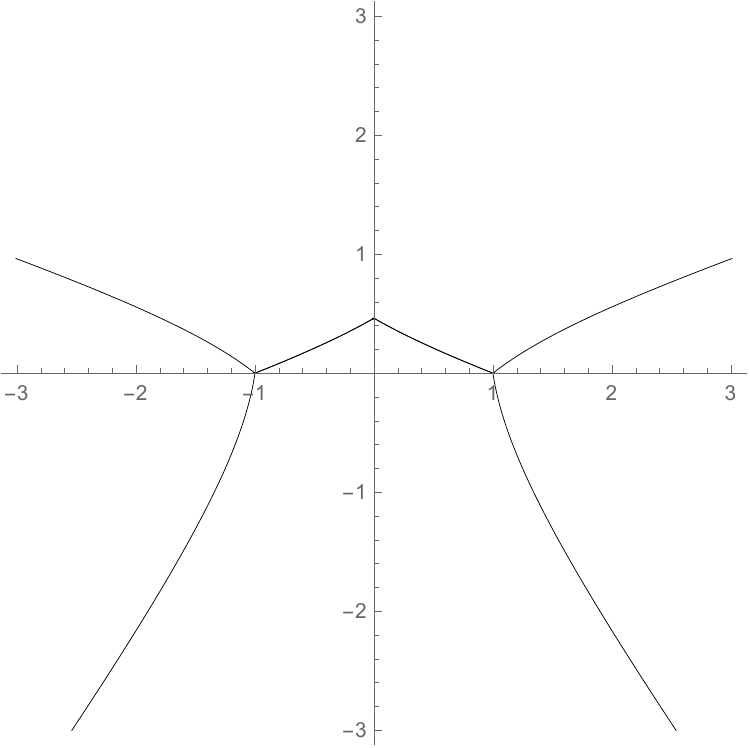}
		\caption{$\Gamma_{0.462i,\pi/4}$}\label{FIG 50}
	\end{minipage}\hfill\begin{minipage}[b]{0.3\linewidth}
		\centering\includegraphics[scale=0.30]{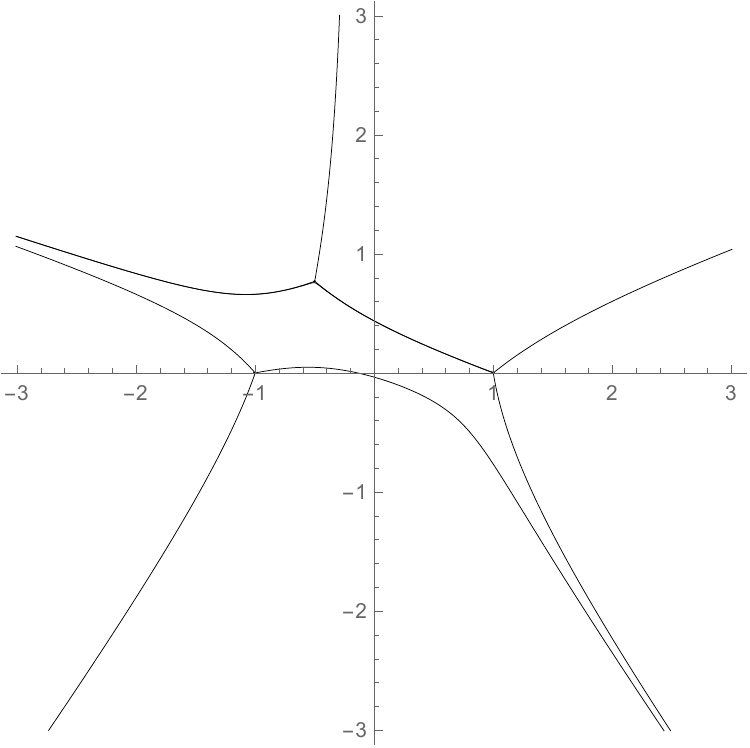}
		\caption{$\Gamma_{-0.5+0.77i,\pi/4}$}\label{FIG 60}
	\end{minipage}\hfill\begin{minipage}[b]{0.3\linewidth}
\centering\includegraphics[scale=0.30]{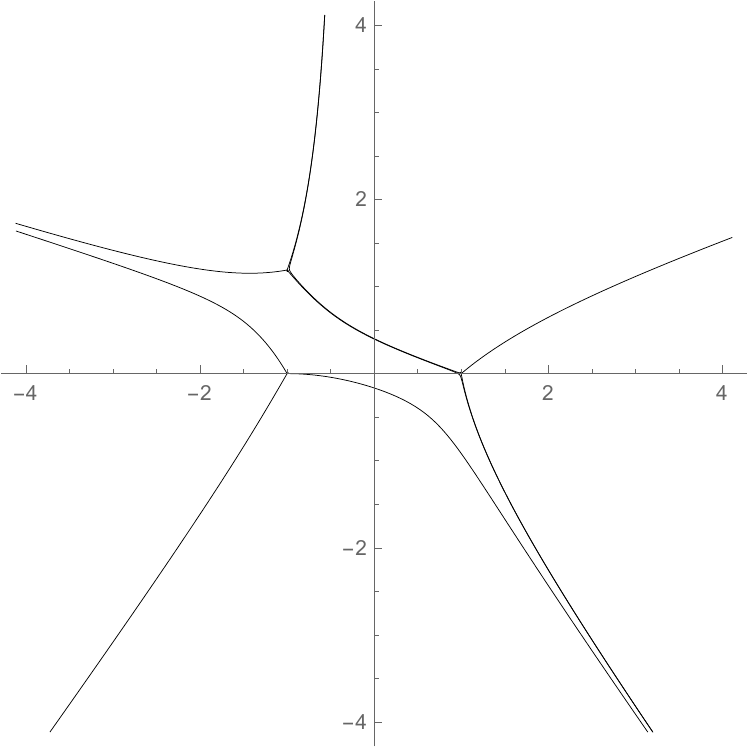}
		\caption{$\Gamma_{-1+1.19i,\pi/4}$}\label{FIG 70}
	\end{minipage}\hfill\begin{minipage}[b]{0.3\linewidth}
	\centering\includegraphics[scale=0.30]{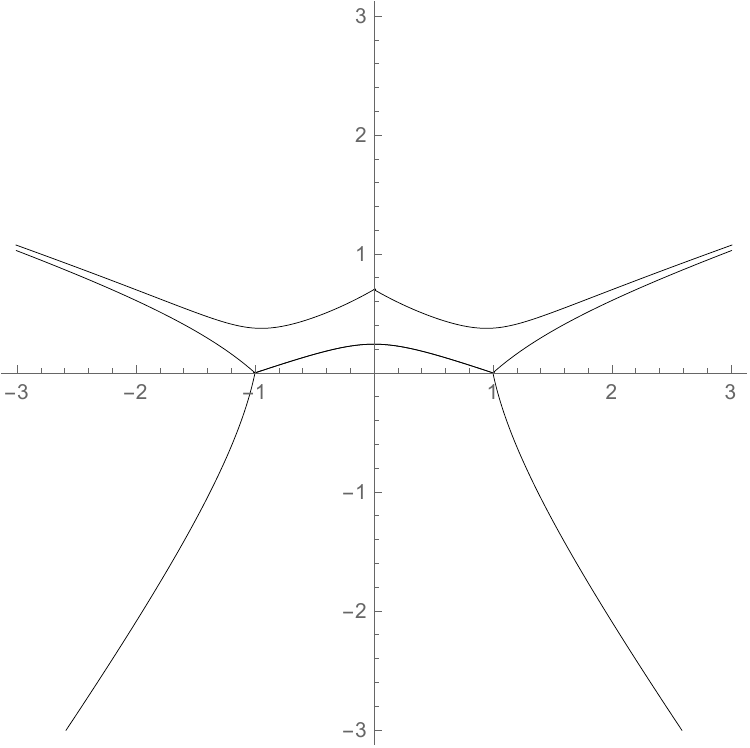}
	\caption{$\Gamma_{0.7i,\pi/4}$}\label{FIG 80}
\end{minipage}\hfill\end{figure}
\end{proposition}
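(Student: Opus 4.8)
The plan is to run the same case-by-case dissection as in the proof of Proposition~\ref{prop princ}, now taking full advantage of the self-symmetry special to $\theta=\pi/4$. Since $\pi/2-\pi/4=\pi/4$, relation (\ref{sym2}) reads $\varpi_{a,\pi/4}>0\Longleftrightarrow\varpi_{-\overline{a},\pi/4}>0$, so the entire critical graph is invariant under the reflection $a\mapsto-\overline{a}$ across the imaginary axis; this is precisely what licenses the reduction to $a\in\Sigma_{1,\pi/4}\cup S_{\pi/4}$, the remaining configurations following by symmetry. The five critical directions are $D_{\pi/4,k}:\arg z=(4k+1)\pi/10$, of which exactly $D_{\pi/4,0},D_{\pi/4,1},D_{\pi/4,2}$ (angles $\pi/10,\pi/2,9\pi/10$) lie in the closed upper half-plane, $D_{\pi/4,1}$ pointing straight up. From Theorem~\ref{main thm} the relevant short trajectory is already known to exist: one joining $z=1$ to $z=a$ when $a\in\Sigma_{1,\pi/4}$, and one joining $z=\pm1$ when $a\in S_{\pi/4}$.

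First I would fix, in each case, the local picture at the three simple zeros $z=\pm1,a$. As in Theorem~\ref{main thm}, the trajectories leaving a zero have initial arguments governed by the formula $\alpha_k(0^+)=\bigl(-\arg(a+1)-2\theta+(2k-1)\pi\bigr)/3$ (and its analogue based at $z=1$), here with $2\theta=\pi/2$. Together with Lemma~\ref{lemme1} and the sign analysis of Remark~\ref{contra}, this decides which emanating rays stay in the upper half-plane and which cross into the lower one. The asymptotic direction of each unbounded ray is then pinned down by applying the Teichm\"uller identity (\ref{teich equality}) to the closed curve formed by two rays together with the enclosed piece of $\mathbb{R}$ or the short trajectory: the admissible angle $2k\pi/5$ between consecutive rays, combined with the count of enclosed zeros, forces a unique consistent labelling by the $D_{\pi/4,k}$, exactly as the branching on $k$ was resolved in Proposition~\ref{prop princ}.

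With the endpoint directions fixed the rest is bookkeeping: I would read off each strip domain as the region swept by trajectories whose two ends escape to $\infty$, and confirm that it contains the asserted subinterval of $[-1,1]$ by checking, via (\ref{cond necess}) and the monotonicity (\ref{monotony of sigmatheta}) of $\Sigma_\theta$, that no further finite critical trajectory crosses it. Continuity of $\Gamma_{a,\pi/4}$ in the Hausdorff metric (proved inside each $\Omega_i$ in Theorem~\ref{main thm}) then reduces every case to a single representative $a$ on the relevant arc, matching the sample graphs of Figures~\ref{FIG 30}--\ref{FIG 80}. The reflection $a\mapsto-\overline{a}$ interchanges $D_{\pi/4,0}\leftrightarrow D_{\pi/4,2}$ and $D_{\pi/4,3}\leftrightarrow D_{\pi/4,4}$ while fixing the vertical $D_{\pi/4,1}$; in Case~5, where $a\in i\mathbb{R}^{+}$ is itself fixed, this forces the short trajectory connecting $\pm1$ and the whole graph to be symmetric about the imaginary axis, yielding the pair $D_{\pi/4,0},D_{\pi/4,2}$ bounding its strip.

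The delicate point is Case~2. Here $a\in\Sigma_{1,\pi/4}^{l}\cap\{\Re z>0\}$ lies on the left branch between $z=1$ and the summit $a_{\pi/4}\in i\mathbb{R}^{+}$, and---unlike every other case---the short trajectory from $z=1$ to $z=a$ is \emph{not} part of the strip boundary but instead attaches the strip containing $\left]-1,\delta\right[$ to a neighbouring half-plane. Establishing this asymmetric attachment amounts to identifying which ray out of $z=a$ closes the strip: one must show the strip is completed by a ray from $z=a$ along the vertical $D_{\pi/4,1}$, the short trajectory itself protruding into a half-plane. Because $D_{\pi/4,1}$ is vertical, several angle comparisons in the Teichm\"uller count (\ref{teich equality}) become tight, so the main obstacle is to exclude the spurious configurations in which a ray from $z=a$ and a ray from $z=1$ both escape along $D_{\pi/4,1}$; this is ruled out precisely by Lemma~\ref{teich lemma}, since two rays of one critical graph cannot tend to $\infty$ along a common critical direction.
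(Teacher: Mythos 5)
Your overall skeleton---the symmetry reduction via (\ref{sym2}), the local angle analysis at the three zeros, Teichm\"{u}ller counting, and Hausdorff continuity inside each $\Omega_{i}$---is the same machinery the paper relies on: its own proof of this proposition is literally the one sentence that the argument is the same as Proposition \ref{prop princ}. Your computation of the critical directions $\arg z=(4k+1)\pi/10$, the identification of $D_{\pi/4,1}$ as vertical, and the action of $a\mapsto-\overline{a}$ on the directions (swapping $D_{\pi/4,0}\leftrightarrow D_{\pi/4,2}$ and $D_{\pi/4,3}\leftrightarrow D_{\pi/4,4}$) are all correct, as is the use of Theorem \ref{main thm} to guarantee the short trajectory in each case.

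However, the step you yourself single out as the crux of Case \ref{case2} rests on a false principle. You claim that ``two rays of one critical graph cannot tend to $\infty$ along a common critical direction'' and attribute this to Lemma \ref{teich lemma}. Teichm\"{u}ller's lemma is the angle/multiplicity identity (\ref{teich equality}) for closed curves made of trajectory arcs; it asserts nothing of the kind. Worse, the claim itself is false and in fact contradicts the very proposition you are proving: the two boundary ends of any strip domain are asymptotic to the \emph{same} critical direction. In Case 1 the strip containing $\left]-1,1\right[$ is bounded by two trajectories escaping along $D_{\pi/4,1}$ and two along $D_{\pi/4,3}$; in Case \ref{case2} itself (Fig.\ref{FIG 40}) a ray from $z=a$ and a ray from a zero $\pm1$ do both escape along $D_{\pi/4,1}$; and the same phenomenon occurs throughout Proposition \ref{prop princ} (e.g.\ the strip bordered by $\gamma_{2},\gamma_{3},\delta_{1},\delta_{2}$ in its first case). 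So your exclusion principle would rule out exactly the configurations the statement asserts. The tool the paper actually uses to discard a spurious configuration in which two rays share a direction is the period argument of (\ref{path}): connect the two rays by an arc of an orthogonal trajectory near $\infty$; the horizontal pieces contribute nothing to $\Re\int e^{i\theta}\sqrt{p_{a}(t)}\,dt$ while the orthogonal piece contributes a nonzero amount, and this is then played against the vanishing or non-vanishing of the corresponding period forced by $a\in\Sigma_{1,\pi/4}$ or $a\in S_{\pi/4}$ (condition (\ref{cond necess})), or against a Teichm\"{u}ller count on a correctly chosen closed curve. Until your false principle is replaced by such an argument, the resolution of Case \ref{case2}---hence the distinction between the graphs of Figures \ref{FIG 30} and \ref{FIG 40}, and the final claim about which domains the short trajectory separates---remains unproved.
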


\begin{remark}
The case of real cubic polynomial quadratic differential $\varpi$ with three
real zeros is trivial; when the polynomial has two conjugate complex zeros :%
\[
\varpi_{x_{0},b}=\left(  z-x_{0}\right)  \left(  z-b\right)  \left(
z-\overline{b}\right)  dz^{2},x_{0}\in%
\mathbb{R}
,b\in%
\mathbb{C}
\setminus%
\mathbb{R}
,
\]
by the linear change of variable\bigskip\ : $z=i\Im\left(  b\right)  y+\Re b,$
the quadratic differential $\lambda\varpi,\lambda\in%
\mathbb{C}
^{\ast},$ has the same critical graph then
\[
-e^{2i\left(  \alpha+\pi/4\right)  }\left(  y^{2}-1\right)  \left(
y-i\frac{\Re b-x_{0}}{\Im b}\right)  dy^{2},\alpha\in\left[  0,\pi\right[  .
\]
In particular, $\varpi_{x_{0},b}$ has a tree if, and only if,
\[
i\frac{\Re b-x_{0}}{\Im b}=t_{\pi/4}.
\]

\end{remark}

\begin{summary}
\label{summary}

\begin{enumerate}
\item By (\ref{sym1},\ref{sym2}), we may focus on the case $\theta\in$
$[0,\frac{\pi}{4}]$ without loss of the generality. Theorem.\ref{main thm}
implies that $\Xi_{\theta}$ defined by (\ref{our set imp}) is exactly the
union of level sets $S_{1,\theta},$ $S_{-1,\theta}$ and $S_{\theta}.$ Let
\begin{align*}
\mathcal{D}  &  =\left\{  \left(  \theta,a\right)  \in\left[  0,\pi\right[
\times%
\mathbb{C}
\setminus\left\{  \pm1\right\}  \right\}  ;\\
\left[  \Gamma_{a,\theta}\right]   &  =\left\{  \Gamma_{b,\theta};\text{
}a\mathcal{R}b\text{ }\right\}  ;\\
\mathcal{SG}  &  \mathcal{=}\left\{  \left[  \Gamma_{a,\theta}\right]  ;\text{
}\theta\in\lbrack0,\frac{\pi}{4}]\right\}  ;
\end{align*}
where $\mathcal{R}$ is the equivalence relation defined on $%
\mathbb{C}
\setminus\left\{  \pm1\right\}  $ by :
\[
a\mathcal{R}b\text{ if and only if, critical graphs }\Gamma_{a,\theta}\text{
and }\Gamma_{b,\theta}\text{ have the same structure.}%
\]
We deduce that the map
\[%
\begin{array}
[c]{cc}%
\mathcal{J}:\mathcal{D\longrightarrow SG} & \text{ }\left(  \theta,a\right)
\longmapsto\left[  \Gamma_{a,\theta}\right]
\end{array}
\]
is surjective.

\item From (\ref{maosero-cubic}), let
\[%
\begin{array}
[c]{c}%
V(x)=4x^{3}-2ax-28b\\
\mathcal{D=}\left\{  (a,b)\in%
\mathbb{C}
^{2};\text{ }2a^{3}-1323b^{2}\neq0\right\}  .
\end{array}
\]
The $\mathcal{D\diagup SG}$ correspondence gives a full classification to
Stokes graph of quadratic differential $\varpi(x)=V(x)dx^{2}$ as $(a,b)$
varies in $\mathcal{D},$ which answers (\ref{first question}). As a
consequence, the phase transition in the $\mathcal{D\diagup SG}$
correspondence shows that for a given $a\in%
\mathbb{C}
,$ there is at most two values of $b$ such that $(a,b)\in\mathcal{D},$ and the
Stokes graph of $\varpi$ is a Boutroux curve; this answers
(\ref{second question}).

\item In \cite[probelm 3]{shapiro ev} introduced very flat differential as
polynomial quadratic differential $\exp(it)P(z)dz^{2}$ with simple zeros and a
maximal number of strip domains. Theorem.\ref{main thm} proves that in the
case $\deg P=3,$ there exists $t\in\lbrack0,2\pi\lbrack$ such that
$\exp(it)P(z)dz^{2}$ is very flat.
\end{enumerate}
\end{summary}


\texttt{University of Gabes, Preparatory Institute of Engineering }

\texttt{of Gabes, Avenue Omar Ibn El Khattab 6029. Gabes. Tunisia.}

\texttt{braekgliia@gmail.com}

\texttt{chouikhi.mondher@gmail.com}

\texttt{faouzithabet@yahoo.fr}\bigskip

\bigskip


\begin{thebibliography}{99}                                                                                               %


\bibitem {strebel}Strebel,K.: Quadratic Differentials, Vol. 5 of Ergebnisse
der Mathematik und ihrer Grenzgebiete (3) [Results in Mathematics and Related
Areas (3)], Springer-Verlag, Berlin, 1984.

\bibitem {jenkins}Jenkins,J. A.: Univalent Functions and Conformal Mapping,
Ergebnisse der Mathematik und ihrer Grenzgebiete. Neue Folge, Heft 18. Reihe:
Moderne Funktionentheorie, Springer-Verlag, Berlin, 1958.

\bibitem {jenk-spencer}Jenkins, J. A., D. C. Spencer. Hyperelliptic
Trajectories. Annals of Mathematics, vol. 53, no. 1, 1951, pp. 4--35. JSTOR, www.jstor.org/stable/196934.

\bibitem {bridgeland+smith}Bridgeland, T., Smith, I. Quadratic differentials
as stability conditions. Publ.math.IHES 121, 155--278 (2015).

\bibitem {shapiro ev}Shapiro, B. On Evgrafov--Fedoryuk's theory and quadratic
differentials. Anal.Math.Phys. 5, 171--181 (2015).

\bibitem {fedoryuk}M. V. Fedoryuk, Asymptotic analysis. Linear ordinary
differential equations.Springer-Verlag, Berlin, 1993.

\bibitem {kawaii}T. Kawai and Y. Takei, Algebraic analysis of singular
perturbation theory, vol. 227 of Translations of Mathematical Monographs.
American Mathematical Society, Providence, RI, 2005. Translated from the 1998
Japanese original by Goro Kato, Iwanami Series in Modern Mathematics.

\bibitem {chouikhi+thabet}Chouikhi, M., Thabet, F. Critical graph of a
polynomial quadratic differential related to a Schr\"{o}dinger equation with
quartic potential. Anal.Math.Phys. 9, 1905--1925 (2019).

\bibitem {maosero}Davide Masoero 2010 J. Phys. A: Math. Theor. 43 095201.

\bibitem {sibuya}Y. Sibuya, Global theory of a second order linear ordinary
differential equation with a polynomial coefficient, North-Holland,
Amsterdam--Oxford; American Elsevier, NY, 1975.

\bibitem {Ilpo laine}V.I. Gromak, I. Laine, and S. Shimomura. Painlev\'{e}
Differential Equations in the Complex Plane. de Gruyter, 2000.

\bibitem {Boutroux}P. Boutroux. Recherches sur les transcendantes de M.
Painlev\'{e} et l'\'{e}tude asymptotique des \'{e}quations differentielles du
second ordre. Ann. \'{E}cole Norm, 30:255--375, 1913.

\bibitem {Kapeav}A. A. Kapaev. Quasi-linear Stokes phenomenon for the
Painlev\'{e} first equation. Journal of Physics A: Mathematical and General,
37:11149, 2004.

\bibitem {bertola}Bertola, M. Boutroux curves with external field: equilibrium
measures without a variational problem. Anal.Math.Phys. 1, 167--211 (2011). https://doi.org/10.1007/s13324-011-0012-3.

\bibitem {harmonic1}Wen, Z., Wu, L., Zhang, Y.: (1991). Set of zeros of
harmonic functions of two variables. In: Cheng, MT., Deng, DG., Zhou, XW.
(eds) Harmonic Analysis. Lecture Notes in Mathematics, vol 1494. Springer,
Berlin, Heidelberg. https://doi.org/10.1007/BFb0087772.

\bibitem {harmonic2}Duren, P.: (2004). Harmonic Mappings in the Plane
(Cambridge Tracts in Mathematics). Cambridge: Cambridge University Press. doi:10.1017/CBO9780511546600.
\end{thebibliography}
\end{document}